\DeclarePairedDelimiter\N{\|}{\|}
\DeclareMathOperator{\bigO}{\mathcal{O}}
\DeclareMathOperator{\diag}{diag}
\DeclareMathOperator{\Span}{span}
\DeclareMathOperator{\T}{\sf T}
\title{Decoupled Structure-Preserving Doubling Algorithm with 
Truncation for Large-Scale Algebraic Riccati Equations}
\author{
Zhen-Chen Guo\thanks{Department of Mathematics, Nanjing University,  Nanjing  210093,  China 
(\email{guozhenchen@nju.edu.cn}).} \and 
Eric King-Wah Chu\thanks{School of Mathematics,  Monash University, 9 Rainforest Walk, Victoria 3800, Australia 
(\email{eric.chu@monash.edu}).} \and
Xin Liang\thanks{Yau Mathematical Sciences Center, Tsinghua University, Beijing 10084, China
(\email{liangxinslm@tsinghua.edu.cn}).} \and 
Wen-Wei Lin\thanks{Department of Applied Mathematics, National Chiao Tung University, Hsinchu 300, Taiwan 
(\email{wwlin@math.nctu.edu.tw}).}
}
\begin{document}
\maketitle

\begin{abstract} 
In~\cite{guoCLL2020decoupled} we propose a decoupled form of the structure-preserving doubling algorithm (dSDA). 
The method decouples the original two to four coupled recursions,  
enabling it to solve large-scale algebraic Riccati equations and other related problems. In this paper, 
we   consider the   numerical computations of the novel dSDA for solving  large-scale  
continuous-time  algebraic Riccati equations with low-rank structures 
(thus possessing numerically low-rank solutions). 
With the help of a new truncation strategy, 
the rank of the approximate solution is  controlled.  
Consequently, large-scale problems can be treated efficiently.  
Illustrative numerical examples are presented to demonstrate and confirm our claims. 
\end{abstract}

\begin{keywords}
continuous-time algebraic Riccati equation, decoupled structure-preserving doubling algorithm, 
large-scale problem,  truncation  
\end{keywords}

\begin{AMS}
	15A24, 65F30, 65H10 
\end{AMS}

\section{Introduction} 
A continuous-time algebraic Riccati equation (CARE) has the form:
\begin{equation}\label{eq:care}
	A^{\T} X + XA - XGX + H =0,
\end{equation}
where $A \in \mathbb{R}^{n \times n}$, $G=B R^{-1}B^{\T}$ with  
$B \in \mathbb{R}^{n \times m}$  and $R>0$,  
and  $H=C^{\T}C\ge 0$ with $C \in \mathbb{R}^{l \times n}$. 
Here, a symmetric matrix $M > 0$ ($\geq 0$) when all its eigenvalues are positive (non-negative). 
These algebraic Riccati equations arise  in  many classical applications 
such as model reduction, filtering  and  control  theory; 
please refer to~\cite{biniIM2012numerical, choiL1990efficient, datta1994linear, 
gawronski1998dynamics, lancasterR1995algebraic, mehrmann1991automomous} 
and the references therein.  Generally, the CARE~\eqref{eq:care} admits more than 
one solutions~\cite{lancasterR1995algebraic, mehrmann1991automomous} if exist. 
However, the unique symmetric positive semi-definite   solution ($X\geq 0$) 
is required for applications \cite{lancasterR1995algebraic, mehrmann1991automomous}. 

The research on the numerical solution of CAREs  has been active, due to its practical  importance.  
Many  engineers and applied mathematicians worked on the topic, 
contributed many methods \cite{biniIM2012numerical, lancasterR1995algebraic, mehrmann1991automomous}. 
For CAREs of moderate sizes, classical approaches apply canonical forms, 
determinants and polynomial manipulation while state-of-the-art ones compute in a numerically stable manner; 
see \cite{chuFL2005structurepreserving, laub1979schur, laub1991invariant, moore1985computational}. 
One favourite approach reformulates the CARE as an algebraic eigenvalue problem \cite{laub1979schur} for the associated 
Hamiltonian matrix  $\mathcal{H} \equiv \begin{bmatrix} \hphantom{-}A & -G\hphantom{^{\T}} \\ -H & -A^{\T} \end{bmatrix}$; 
see the command \texttt{care}  in MATLAB \cite{mathworks2010matlab}. 
The other favourite is the structure-preserving doubling algorithm (SDA) \cite{chuFL2005structurepreserving}, 
which approximates the solution via the stable invariant subspace of $\mathcal{H}$.  

As for large-scale CAREs, they have attracted much attention recently 
\cite{amodeiB2010invariant, bansch2012stabilization, 
bennerBKS2018radi, benner2005dimension, bennerS2010newtongalerkinadi, 
bennerS2013numerical, heyouniJ2009extended, jbilou2003block, jbilou2006arnoldi, 
liCLW2013solving, saakHB2010matrix, saakKB2016mmess}. 
Solving CAREs may involve the invariant subspace of the Hamiltonian matrix 
$\mathcal{H}$,  an expensive exercise when computed directly. 
Several authors~\cite{amodeiB2010invariant, bennerBKS2018radi, massoudiOT2016analysis} 
focus on implicitly manipulating the invariant subspace. 
Benner and his collaborators have contributed heavily on the solution 
of large-scale CAREs \cite{bennerHSW2016inexact, bennerLP2008numerical, 
bennerS2013numerical, saakHB2010matrix,  saakKB2016mmess}, 
based on Newton's methods with ADIs for the associated Lyapunov equations. 
One of these efficient methods is  the low-rank Newton-Kleinman ADI method~\cite{saakKB2016mmess}.    
Based on the Cayley iteration,   
the authors in~\cite{bennerBKS2018radi} proposed a RADI method for computing 
the invariant subspaces of the residual equations,   
accumulating  some matrices generated to construct the  approximate solution.  
There are some difficulties in the initial stabilization of the Newton-Kleinman ADI method 
and the choice of parameters for the ADI is mostly by heuristics. 
Another popular approach is the Krylov subspace or projection methods  \cite{heyouniJ2009extended, jbilou2003block, jbilou2006arnoldi, simoncini2016analysis, simonciniSM2014two}. Solvability of the projected equations has to be assumed.  

Although  efficient for  CAREs of moderate sizes, the original SDA 
(which is globally and quadratically convergent except for the critical case~\cite{linX2006convergence}) 
does not work well for large-scale problems.   
The method has three coupled recursions and the corresponding matrix inversions 
lead to a computational complexity of $\bigO(n^3)$. For large-scale problems,  
one of those recursions has to be applied {\it implicitly\/} because of its loss of structures, 
leading to inefficiency.  
In~\cite{guoCLL2020decoupled} we developed  the dSDA, which decouples  the original 
three recursions. The dSDA retains the solid theoretical foundation of the SDA, 
for its global quadratic convergence. 

In this paper, we further develop the dSDA in depth,   considering  the practical  
computational issues for large-scale CAREs. To control the rank 
of the approximate solutions, 
a novel truncation strategy is proposed. The practical dSDA$\rm{_t}$ 
(with the subscript indicating truncation) is efficient for large-scale CAREs. 
A detailed analysis verifies  the convergence of the dSDA$\rm{_t}$. 
Illustrated  numerical examples are presented. 

\subsection*{Main Contributions}
\begin{enumerate}[(1)]
\item  We develop  a novel truncation technique in the dSDA$\rm{_t}$, 
preserving  the  simple but elegant form of the dSDA. 
	As a result,  for large-scale CAREs, we need not  
	compute $A_k$ (as in the original SDA) recursively, 
	thus eliminating the $2^k$ factor in the flop count and improving the efficiency. 
	We are only required to compute $H_k$ with a simple formula. 

\item To further improve the  algorithm, 
	we 	combine the doubling and truncation   
	into a nontrivial but more efficient step.

\item  For many other methods for large-scale CAREs,  
	it is assumed that the desired   
	solution is numerically low-rank. 
	From our derivation, 	we explicitly  show that the approximate 
	solutions are low-rank.   Similarly, we do not need to assume 
	the solvability of projected equations, nor we have  any problems 
	in initial stabilization or choosing parameters.

\item 	For numerical stability, much of our effort  
	involves the proof of convergence for the dSDA$\rm{_t}$. 
	We construct some seemingly tedious but concise expressions 
	of the approximate solutions. 

\end{enumerate}

\subsection*{Organization} 
After some preliminaries in Section~\ref{sec:preliminaries}, 
we   construct the truncation strategy for the  dSDA inductively 
in Section~\ref{sec:computaiton-issues}. 
We show the truncation process for the first two steps  in detail.  
Error analysis and convergence proof   are presented in Section~\ref{sec:forward-error-analysis-for-dsda} 
and illustrative numerical examples are presented in Section~\ref{sec:numerical-examples}, 
before we conclude in Section~\ref{sec:conclusions}.  
Appendices~\ref{sec:proof-of-lemma-lm} and~\ref{sec:proof-of-lemma-ref-lm-doubling_k1}
contain two complicated proofs, for the combined  doubling-truncation 
step in Section~\ref{sec:computaiton-issues} 
and the convergence analysis in Section~\ref{sec:forward-error-analysis-for-dsda}, respectively. 

\subsection*{Notations}
By $\mathbb{R}^{n\times n}$ we denote the set of all $n\times n$ real matrices, 
with $\mathbb{R}^n = \mathbb{R}^{n\times 1}$ and $\mathbb{R}=\mathbb{R}^{1}$;  
$\mathbb{S}^n$ denotes the subset symmetric matrices in $\mathbb{R}^{n \times n}$. 
The $n\times n$ identity matrix is $I_n$ and we write $I$ if its dimension is clear.  
The zero matrix is $0$ and  the superscript $(\cdot)^{\T}$ takes the transpose. 
By $M \oplus N$, we denote $\begin{bmatrix}
	M & 0 \\ 0 & N
\end{bmatrix}$, and  $M \otimes N$ is the Kronecker product of 
the  two matrices $M$ and $N$. The inequality $\Phi \le \Psi$ 
holds if and only if $\Psi - \Phi \ge 0$,  
and similarly for $\Phi < \Psi$, $\Phi \ge \Psi$ and $\Phi > \Psi$. 
The $2$- and  Frobenius norms are  denoted by $\| \cdot\|$ and  $\| \cdot\|_F$, respectively.  

\section{Preliminaries}\label{sec:preliminaries}
The discrete-time algebraic Riccati equation (DARE), analogous to the CARE, is in the form of 
\begin{equation}\label{eq:dare}
-X + A^{\T} X (I + GX)^{-1} A + H = 0.  
\end{equation} 
For solvability, we  assume that both CAREs and DAREs are stabilizable and detectable. 
We shall also assume without loss of generality that $B$ and $C^{\T}$ are of  full column rank 
with $m,l\ll n$  and  $R = I_m$. The DARE  admits many solutions but only the unique 
symmetric positive semi-definite   solution is  of practical interest.

Write $A_c:=(I+GX)^{-1}A$, where $A, G$ and $X$ are specified in \eqref{eq:dare}, 
and define the linear operator 
$\mathcal{L}: \mathbb{S}^n \rightarrow \mathbb{S}^n$ by 
	$\mathcal{L}(\Phi) =\Phi - A_c^{\T} \Phi A_c$, 	
which is invertible when $A_c$ is d-stable (with eigenvalues strictly 
inside the unit circle; see~\cite{lancasterR1995algebraic}).  
Define 
\begin{equation}\label{eq:lxieta}
	\begin{aligned}
		\ell&:=\|\mathcal{L}^{-1}\|^{-1}=\min_{\Phi\in \mathbb{S}^n, \|\Phi\|=1} 
		\| \Phi -  A_c^{\T} \Phi A_c\|, 
		\\
		\xi&:= \max_{\Phi\in \mathbb{R}^{n\times n}, \|\Phi\|=1} 
		\left\|\mathcal{L}^{-1}\left[ A^{\T} (I+XG)^{-1}X \Phi + \Phi^{\T} X(I+GX)^{-1}A\right]\right\|, 
		\\
		\eta&:= \max_{\Phi\in \mathbb{S}^n, \|\Phi\|=1} 
		\left\|\mathcal{L}^{-1}\left[ A^{\T} (I+XG)^{-1}X \Phi X(I+GX)^{-1}A\right]\right\|.
	\end{aligned}
\end{equation}
Let $\widetilde A = A+\Delta A$, $\widetilde G = G+\Delta G$ and $\widetilde  H = H+\Delta H$ 
and consider the perturbed DARE: 
\begin{align}\label{eq:dare_perturbed}
	-\widetilde X + \widetilde{A}^{\T} \widetilde  X (I+ \widetilde G \widetilde X)^{-1} \widetilde{A}  + \widetilde H=0.  
\end{align}
With 
\begin{small}
\begin{align*}
	\delta&:=\frac{\|\Delta A\| + \|X(I+GX)^{-1}A\| \|\Delta G\|}{1-\|X(I+GX)^{-1}\|\|\Delta G\|},
	\quad 
	\alpha:=\frac{\|(I+GX)^{-1}\|(\|A\| + \|\Delta A\|)}{1-\|X(I+GX)^{-1}\| \|\Delta G\|},
	\\
	g&:= \frac{\|(I+GX)^{-1}\|(\|G\| + \|\Delta G\|)}{1-\|X(I+GX)^{-1}\| \|\Delta G\|},
\end{align*}
\end{small}
we have the following result.  
\begin{lemma}\cite[Theorem~4.1]{sun1998perturbation} \label{lm:forward_error_dare}
Let $X$ be the unique symmetric positive semi-definite  solution 	to the DARE \eqref{eq:dare} 
and 
\begin{small}
\begin{gather*}
	\omega:= \frac{\|\Delta H\|}{\ell} + \xi \|\Delta A\| + \eta \|\Delta G\| 
	+ \frac{\delta \|X(I+GX)^{-1}\| }{\ell}(\|\Delta A\| + \|X(I+GX)^{-1}A\| \|\Delta G\|),
	\\
	\zeta:= \delta \|(I+GX)^{-1}\|  \left(2\|(I+GX)^{-1}A\| + \delta \|(I+GX)^{-1}\|\right),
	\\
	\theta:= \frac{2\ell\omega}
	{\ell-\zeta+ \ell g \omega +\sqrt{(\ell-\zeta + \ell g \omega)^2-4\ell g\omega(\ell -\zeta + \alpha^2)}}.
\end{gather*}
\end{small}
If $\widetilde G\geq 0, \widetilde H \geq 0$, 
$\|X(I+GX)^{-1}\|\|\Delta G\|<1$, $g \theta<1$ and  
\begin{small}
\begin{align*}
	\frac{\delta \|(I+GX)^{-1}\| + g \theta \|(I+GX)^{-1}A\|}{1-g\theta} 
	&< 
	\frac{\ell}{\|(I+GX)^{-1}A\| + \sqrt{\ell + \|(I+GX)^{-1}A\|^2}}, 
	\\
	\omega& <\frac{(\ell - \zeta)^2}
	{\ell g \left(\ell -\zeta + 2\alpha + \sqrt{(\ell -\zeta + 2\alpha)^2-(\ell - \zeta)^2}\right)}, 
\end{align*}
\end{small}
then the perturbed DARE \eqref{eq:dare_perturbed} has a unique symmetric 
positive semi-definite solution $\widetilde X$ with the error   
$\|\widetilde X-X\|\leq \theta$. 
\end{lemma}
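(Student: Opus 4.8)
The plan is to recast the perturbed DARE~\eqref{eq:dare_perturbed} as a fixed-point equation for the error $E := \widetilde X - X$ on the space $\mathbb{S}^n$, and to apply a contraction-mapping argument on a closed ball whose radius turns out to be $\theta$.

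First I would subtract the DARE~\eqref{eq:dare} from~\eqref{eq:dare_perturbed}. Using the identity $M(I+GM)^{-1} = (I+MG)^{-1}M$ together with first-order resolvent (Neumann-series) expansions of $(I+\widetilde G\widetilde X)^{-1}$ about $(I+GX)^{-1}$ — legitimate precisely because the hypothesis $\|X(I+GX)^{-1}\|\,\|\Delta G\| < 1$ keeps the relevant matrices invertible — the difference of the two equations can be rearranged into the form
\[
  \mathcal{L}(E) \;=\; E - A_c^{\T} E A_c \;=\; \mathcal{R}\bigl(E;\,\Delta A,\Delta G,\Delta H\bigr),
\]
where $\mathcal{R}$ splits into (i) terms linear in $\Delta A,\Delta G,\Delta H$ and independent of $E$ (these feed $\|\Delta H\|/\ell$, $\xi\|\Delta A\|$, $\eta\|\Delta G\|$ and the $\delta$-weighted term of $\omega$), and (ii) terms that are at least quadratic in $E$ or couple $E$ with the perturbations (these produce the $\zeta$, $\alpha$ and $g$ contributions). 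The auxiliary quantities $\delta,\alpha,g$ are exactly the sharp bounds for the perturbed closed-loop quantities $\|\widetilde X(I+\widetilde G\widetilde X)^{-1}\widetilde A - X(I+GX)^{-1}A\|$, $\|(I+\widetilde G\widetilde X)^{-1}\widetilde A\|$ and $\|(I+\widetilde G\widetilde X)^{-1}\widetilde G\|$ that arise from the expansion.

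Since $X$ is the stabilizing positive semi-definite solution, $A_c$ is d-stable, hence $\mathcal{L}$ is invertible with $\|\mathcal{L}^{-1}\| = \ell^{-1}$; applying $\mathcal{L}^{-1}$ gives $E = \mathcal{F}(E) := \mathcal{L}^{-1}\mathcal{R}(E;\cdot)$, a map $\mathbb{S}^n \to \mathbb{S}^n$. I would then bound $\|\mathcal{F}(E)\|$ for $\|E\| \le t$, invoking the definitions~\eqref{eq:lxieta} of $\ell,\xi,\eta$ to handle the $\mathcal{L}^{-1}$-weighted linear and bilinear forms, obtaining an estimate of the shape $\|\mathcal{F}(E)\| \le \omega + (\text{affine in }t)\cdot t + (\text{quadratic in }t)$, together with a companion Lipschitz estimate $\|\mathcal{F}(E_1)-\mathcal{F}(E_2)\| \le q(t)\,\|E_1-E_2\|$. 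Imposing the self-mapping condition $\|\mathcal{F}(E)\| \le t$ reduces to a scalar quadratic inequality in $t$; the set of admissible radii is an interval whose left endpoint is precisely $\theta$ in the displayed $\frac{2\ell\omega}{\,b+\sqrt{b^2-4ac}\,}$ form (the smaller root). The remaining hypotheses do exactly the bookkeeping needed: $g\theta<1$ and the two displayed inequalities guarantee that the quadratic has real roots (so $\theta$ is well defined) and that $q(\theta)<1$ (so $\mathcal{F}$ is a contraction on $\overline{B(0,\theta)}$), while $\widetilde G,\widetilde H\ge 0$ together with the stabilizability/detectability inherited under smallness ensure the fixed point $\widetilde X = X+E$ is the unique stabilizing PSD solution. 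The Banach fixed-point theorem then yields a unique such $E$ with $\|E\|\le\theta$, which is the assertion.

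The hard part will be step one: producing the decomposition $\mathcal{L}(E)=\mathcal{R}(E;\cdot)$ with the remainder split so that its pieces reproduce the published constants $\xi,\eta,\delta,\alpha,g,\zeta,\omega$ one-for-one. This is a lengthy but essentially mechanical manipulation — repeated use of $M(I+GM)^{-1}=(I+MG)^{-1}M$ and careful first-order resolvent expansions — where the real work is keeping every cross term in the correct bin (perturbation-linear versus higher-order in $E$); otherwise one recovers only a looser radius than $\theta$. Once that is done, solving the quadratic and verifying the contraction are routine. As this lemma is quoted verbatim from~\cite[Theorem~4.1]{sun1998perturbation}, we refer the reader there for the full details.
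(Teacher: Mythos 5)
The paper itself gives no proof of this lemma: it is imported verbatim, with a citation to \cite[Theorem~4.1]{sun1998perturbation}, and the burden of proof is discharged entirely by that citation. Your sketch is therefore not being compared against a proof in this paper, but against Sun's original argument, and on that score your outline is a faithful reconstruction of the standard route. Subtracting the unperturbed DARE from the perturbed one, rewriting the result as $\mathcal{L}(E)=\mathcal{R}(E;\Delta A,\Delta G,\Delta H)$ with $\mathcal{L}(\Phi)=\Phi-A_c^{\T}\Phi A_c$ invertible because $A_c$ is d-stable, inverting $\mathcal{L}$ to get a fixed-point map on a ball, and reducing the self-mapping condition to a scalar quadratic whose \emph{smaller} root (written in the numerically stable $2c/(b+\sqrt{b^2-4ac})$ form) is precisely $\theta$ — this is exactly the contraction-mapping scheme that underlies Sun's perturbation bounds for DAREs, and you correctly identify the roles of $\ell,\xi,\eta$ (bounds on $\mathcal{L}^{-1}$ composed with the relevant linear/bilinear forms), of $\delta,\alpha,g$ (bounds on the perturbed closed-loop data), and of the hypotheses $g\theta<1$ and the two displayed inequalities (existence of real roots plus the contraction/stabilizability bookkeeping). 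You also correctly flag the only genuinely delicate point — routing every cross term into the right bin so the constants come out exactly as published rather than as looser surrogates — and, sensibly, defer to the cited source for that accounting. In short: the approach is right, the structure is right, and the deferral to \cite{sun1998perturbation} is exactly what the paper itself does.
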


\begin{remark}\label{rk:forward-error-dare}
Lemma~\ref{lm:forward_error_dare} suggests a first-order perturbation bound for the solution $X$: 
\[
\|\widetilde X- X\|\leq \frac{1}{\ell}\|\Delta H\| + \xi\|\Delta A\| + \eta \|\Delta G\| 
+ \bigO(\|(\Delta H, \Delta A, \Delta G)\|^2)
\]
as $\|(\Delta H, \Delta A, \Delta G)\| \to 0$, leading to 
\[
	\frac{\|\widetilde X-X\|}{\|X\|} 
	\lesssim \frac{1}{\ell}\frac{\|H\|}{\|X\|}\frac{\|\Delta H\|}{\|H\|} 
	+ \xi \frac{\|A\|}{\|X\|} \frac{\|\Delta A\|}{\|A\|} 
	+ \eta \frac{\|G\|}{\|X\|} \frac{\|\Delta G\|}{\|G\|} 
\]
for sufficiently small $\|(\Delta H, \Delta A, \Delta G)\|$, 
with ``$\lesssim$'' denoting ``$\leq$'' while ignoring the $\bigO$-term.
\end{remark}

In the following we sketch the SDA and dSDA for CAREs. 
Define   $A_\gamma := A - \gamma I$ and 
$K_\gamma := A_\gamma^{\T} + H A_\gamma^{-1} G$, 
which are nonsingular for some  parameter $\gamma >0$. Let 
\begin{equation*}
	A_0 = I_n + 2\gamma K_\gamma^{-{\T}}, \ \ 
	G_0 = 2\gamma A_\gamma^{-1} G K_\gamma^{-1}, \ \ 
	H_0 = 2\gamma K_\gamma^{-1} H A_\gamma^{-1}. 
\end{equation*}
Assuming that $I_n + G_k H_k$ are nonsingular for $k=0, 1, \cdots$, 
the SDA  has three iterative recursions: 
\begin{equation}\label{eq:sda}
\begin{aligned}
	A_{k+1} &= A_k (I_n + G_k H_k)^{-1} A_k, \ \ \ 
	G_{k+1} = G_k + A_k (I_n + G_k H_k)^{-1}  G_k A_k^{\T}, \\
	H_{k+1} &= H_k + A_k^{\T} H_k(I_n + G_k H_k)^{-1} A_k.  
\end{aligned}	
\end{equation}
For the SDA~\eqref{eq:sda}, we have $A_k \to 0$, $G_k \to Y$ 
(the solution to the dual CARE: $AY + Y A^{\T} - YHY + G=0$) and 
$H_k \to X$, all quadratically except for the critical case where the convergence is linear. 
It is  worthwhile to point out that the DARE shares the same SDA formulae \eqref{eq:sda}, 
with the alternative starting points   
$A_0:=A,  G_0 := G$, and $H_0 :=  H$. 

It is worth noting that $I + G_k H_k$ are generically nonsingular. 
Several remedies to avoid singularity are available, 
such as adjusting the shift $\gamma$ appropriately, 
or the double-Cayley transform~\cite{guoCL2019doubling}.  
We shall assume this nonsingularity for the rest of the paper and 
leave the research into the remedies to the future. 

Denote  $\widetilde A_{\gamma}:= A_\gamma^{-1} A_{-\gamma}= I+2\gamma A_\gamma^{-1}$, 
then we have the following results for the dSDA.  

\begin{lemma}[dSDA for CAREs]\label{thm:CAREiterationk}
	Let $U_0 = A_\gamma^{-1} B$, $V_0 = A_\gamma^{-{\T}} C^{\T}$. 	
	Denote $U_j := \widetilde A_{\gamma} U_{j-1}$ and 
	$V_j := \widetilde A_{\gamma}^{{\T}} V_{j-1}$ for $j\geq 1$. 
	For all $k\ge 1$, the SDA produces the following decoupled form   
	\begin{equation}\label{eq:care-iteration}
		\begin{aligned}
			A_k
			&=
			\widetilde A_{\gamma}^{2^k}-2\gamma 
			\breve{U}_k
			\left(I_{2^k m}+Y_kY_k^{\T}\right)^{-1}Y_k
			\breve{V}_k^{\T},  
			\\
			G_k
			&=
			2\gamma 
			\breve{U}_k
			\left(I_{2^k m}+Y_kY_k^{\T}\right)^{-1}
			\breve{U}_k^{\T}, 
			\ \ \ 
			H_k
			=
			2\gamma 
			\breve{V}_k
			\left(I_{2^k l}+Y_k^{\T} Y_k\right)^{-1}
			\breve{V}_k^{\T}, 
		\end{aligned}	
	\end{equation}
	where $\breve{U}_k := [U_0,U_1,\cdots, U_{2^k-1}]$, 
	$\breve{V}_k :=[V_0,V_1,\cdots, V_{2^k-1}]$,  $Y_k=\begin{bmatrix}
		0&Y_{k-1}\\Y_{k-1}&2\gamma T_{k-1}
	\end{bmatrix}\in \mathbb{R}^{2^k m \times 2^k l}$  
	and  
	$T_{k}=\breve{U}_k^{\T} \breve{V}_k$, with   $Y_0 = B^{\T} A_\gamma^{-{\T}} C^{\T}$   
	and $T_0 = U_0^{\T} V_0$. 
\end{lemma}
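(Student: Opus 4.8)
The plan is to prove~\eqref{eq:care-iteration} by induction on $k$, using the three SDA recursions~\eqref{eq:sda} as the doubling step. Conceptually, $(A_0,G_0,H_0)$ is the standard-symplectic-form realisation of the Cayley transform (with shift $\gamma$) of the Hamiltonian $\mathcal H$, so $\widetilde A_\gamma=(A-\gamma I)^{-1}(A+\gamma I)$ is the Cayley transform of $A$ alone; hence~\eqref{eq:care-iteration} is simply the assertion that one doubling multiplies the ``free'' part $\widetilde A_\gamma^{2^k}\mapsto\widetilde A_\gamma^{2^{k+1}}$ while doubling the size of the low-rank correction, which is carried by the (block-)Krylov data $\breve U_k,\breve V_k,Y_k,T_k$.

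For the base case I would put $A_0=I_n+2\gamma K_\gamma^{-\T}$, $G_0=2\gamma A_\gamma^{-1}GK_\gamma^{-1}$, $H_0=2\gamma K_\gamma^{-1}HA_\gamma^{-1}$ into the form~\eqref{eq:care-iteration} with $k=0$ (so $\breve U_0=U_0$, $\breve V_0=V_0$, and $Y_0,T_0$ as defined). Since $R=I_m$ we have $G=BB^{\T}$, $H=C^{\T}C$, whence $K_\gamma=A_\gamma^{\T}\bigl(I_n+V_0(CA_\gamma^{-1}B)B^{\T}\bigr)$; combining the Sherman--Morrison--Woodbury identity for $\bigl(I_n+V_0(CA_\gamma^{-1}B)B^{\T}\bigr)^{-1}$ with the push-through identity $P(I+QP)^{-1}=(I+PQ)^{-1}P$ and the elementary relations $CA_\gamma^{-1}B=Y_0^{\T}$, $B^{\T}V_0=Y_0$, $A_\gamma^{-\T}C^{\T}=V_0$, $V_0^{\T}=CA_\gamma^{-1}$ collapses $A_0,G_0,H_0$ exactly into~\eqref{eq:care-iteration} at $k=0$. (The stated $k=1$ case then follows by one application of~\eqref{eq:sda}; equivalently the induction below, launched at $k=0$, delivers every $k\ge1$.)

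For the inductive step, assume~\eqref{eq:care-iteration} at level $k$. Iterating $U_j=\widetilde A_\gamma U_{j-1}$ gives $U_{2^k+j}=\widetilde A_\gamma^{2^k}U_j$, hence $\breve U_{k+1}=[\,\breve U_k,\ \widetilde A_\gamma^{2^k}\breve U_k\,]$ and symmetrically $\breve V_{k+1}=[\,\breve V_k,\ (\widetilde A_\gamma^{\T})^{2^k}\breve V_k\,]$, so $T_{k+1}=\breve U_{k+1}^{\T}\breve V_{k+1}$ inherits a $2\times2$ block structure with $(1,1)$-block $T_k$. I would then substitute the level-$k$ expressions into~\eqref{eq:sda}: abbreviating $M_k=(I+Y_kY_k^{\T})^{-1}$ and $N_k=(I+Y_k^{\T}Y_k)^{-1}$, one has $G_kH_k=4\gamma^2\,\breve U_kM_kT_kN_k\breve V_k^{\T}$, so $(I_n+G_kH_k)^{-1}$ comes from Woodbury; expanding $A_k(I_n+G_kH_k)^{-1}A_k$, $G_k+A_k(I_n+G_kH_k)^{-1}G_kA_k^{\T}$ and $H_k+A_k^{\T}H_k(I_n+G_kH_k)^{-1}A_k$, every resulting term is $[\,\breve U_k,\ \widetilde A_\gamma^{2^k}\breve U_k\,]=\breve U_{k+1}$ (or its transpose, or its $\breve V$-analogue) times a small central matrix, while the two factors $\widetilde A_\gamma^{2^k}$ supplied by the two copies of $A_k$ combine into the leading term $\widetilde A_\gamma^{2^{k+1}}$ of $A_{k+1}$. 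What remains is to recognise the accumulated central matrices as $(I+Y_{k+1}Y_{k+1}^{\T})^{-1}$, its $Y_{k+1}$-weighted version, and $(I+Y_{k+1}^{\T}Y_{k+1})^{-1}$, where $Y_{k+1}=\begin{bmatrix}0&Y_k\\Y_k&2\gamma T_k\end{bmatrix}$.

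The hard part will be precisely this last identification. I would do it by forming $I+Y_{k+1}Y_{k+1}^{\T}$ (and $I+Y_{k+1}^{\T}Y_{k+1}$) in $2\times2$-block form and applying the Schur-complement inversion formula, checking the outcome block by block against the terms collected from~\eqref{eq:sda} and using the commuting relation $M_kY_k=Y_kN_k$ to slide the inverses past $Y_k$. The difficulty is only the sheer number of cross-terms: each ingredient invoked --- Sherman--Morrison--Woodbury, the push-through identity, and $M_kY_k=Y_kN_k$ --- is elementary and no genuinely new idea is needed, which is exactly why the cognate combined doubling--truncation identity in Section~\ref{sec:computaiton-issues} is relegated to Appendix~\ref{sec:proof-of-lemma-lm}.
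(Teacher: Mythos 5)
Note that this paper does not actually prove Lemma~\ref{thm:CAREiterationk}: it is stated as background from~\cite{guoCLL2020decoupled}, so there is no in-paper proof to compare against. Your induction is nevertheless sound and almost certainly the intended route. The $k=0$ base case checks out: writing $K_\gamma = A_\gamma^{\T}(I_n + V_0Y_0^{\T}B^{\T})$ (using $CA_\gamma^{-1}B = Y_0^{\T}$) and applying SMWF together with push-through reduces $A_0,G_0,H_0$ exactly to the $k=0$ form of~\eqref{eq:care-iteration} with $\breve U_0=U_0$, $\breve V_0=V_0$. For the inductive step, the identification you flag as ``the hard part'' does close: setting $M_k=(I+Y_kY_k^{\T})^{-1}$, $N_k=(I+Y_k^{\T}Y_k)^{-1}$ and $\Xi_k=(M_k^{-1}+4\gamma^2T_kN_kT_k^{\T})^{-1}$, push-through gives $(I_n+G_kH_k)^{-1}G_k=2\gamma\,\breve U_k\Xi_k\breve U_k^{\T}$; the identity $A_k\breve U_k=\widetilde A_\gamma^{2^k}\breve U_k-2\gamma\,\breve U_kM_kY_kT_k^{\T}$ pulls everything through $\breve U_{k+1}$; and the Schur complement of the $(1,1)$-block of $I+Y_{k+1}Y_{k+1}^{\T}$ equals $\Xi_k^{-1}$ (via $Y_k^{\T}M_kY_k=I-N_k$), so the block-inversion formula reproduces $M_{k+1}$ term by term, matching the expression coming out of~\eqref{eq:sda}. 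The $H_{k+1}$ and $A_{k+1}$ cases are symmetric. So the strategy is complete; only the remaining routine block algebra is left unwritten, which you correctly characterise as mechanical.
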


The three formulae in \eqref{eq:care-iteration} are decoupled. To solve CAREs, 
it is sufficient to iterate with $H_k$ and monitor 
$\|H_k-H_{k-1}\|$ or the normalized  residual  for convergence, ignoring $A_k$ and $G_k$. 

From Lemma~\ref{thm:CAREiterationk}, the dSDA is clearly related to the projection method  with the Krylov subspace spanned  by the columns of 
$\breve{V}_k$. As it is well-known that 
Krylov subspaces lose their linear independence as their dimensions grow, 
it is common to truncate their bases, 
or eliminate the insignificant components. This controls any unnecessary growths in the rank of 
the approximate solutions, thus improving the efficiency of  the computation, while sacrificing 
a negligible amount of accuracy. In addition, the kernel $2\gamma (I_{2^k l}+Y_k^{\T} Y_k)^{-1}$ of the 
approximation in~\eqref{eq:care-iteration}, as the solution of the projected CARE, 
will deteriorate in condition as $k$ grows. This condition may be improved by limiting the rank of 
$\breve{V}_k$. The main results of our paper concern the truncation in the dSDA$\rm{_t}$, 
which is described in details in the next section. 

\section{Computational Issues}\label{sec:computaiton-issues} 

This section is dedicated to  the truncation of  
 $H_k$ (or $G_k$, if desired), which will be kept low-rank. 
 We first outline the whole  truncation process  in Figure~\ref{fig:truncation-routine:} 
(for $G$ only and that for $H$ is similar).    
From the initial $G_0$, the dSDA yields $G_1$ which is truncated to $G_1^{(1)}$. 
This in turn is processed by the dSDA to produce $G_2^{(1)}$ which is truncated to $G_2^{(2)}$. 
Recursively, at stage $k$ in the doubling-truncating step, $G_k^{(k)}$, 
the result of the truncation from  $G_{k}^{(k-1)}$, 
produces  $G_{k+1}^{(k)}$ by the dSDA and then we  truncate $G_{k+1}^{(k)}$  to obtain $G_{k+1}^{(k+1)}$. 
In other words, the subscripts are the indices in the dSDA and the superscripts are from the truncation. 

\newcommand\red[1]{{\color{red}#1}}
\def\trunc{\text{truncation}}
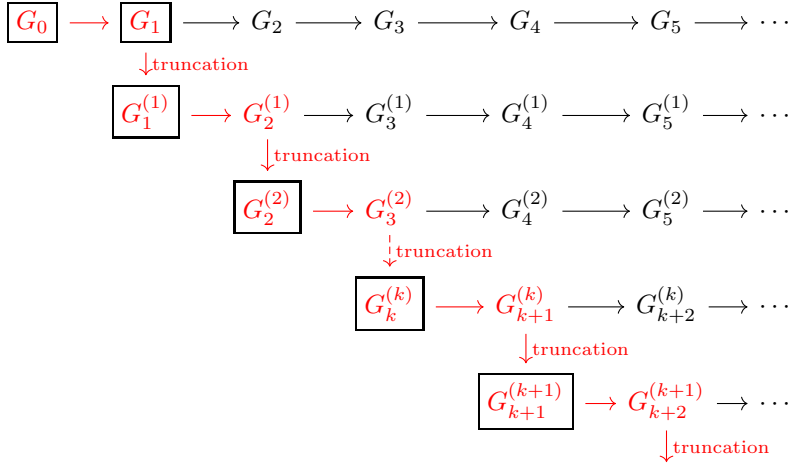
\begin{figure}[ht]
	\centering
	\begin{tikzcd}[sep=small]
		\boxed{\red{G_0}} \arrow[r, red]	&\boxed{\red{G_1}} \arrow[r] \arrow[d, red, "\trunc" red]  & G_2 \arrow[r]                                 & G_3 \arrow[r]                                 & G_4 \arrow[r]                                 & G_5 \arrow[r]             & \cdots 
		\\
		&	\boxed{\red{G_1^{(1)}}} \arrow[r, red]                & \red{G_2^{(1)}} \arrow[r] \arrow[d, red, "\trunc" red] & G_3^{(1)} \arrow[r]                           & G_4^{(1)} \arrow[r]                           & G_5^{(1)} \arrow[r]       & \cdots 
		\\
		&	                                         & \boxed{\red{G_2^{(2)}}} \arrow[r, red]                     & \red{G_3^{(2)}} \arrow[r] \arrow[d, dashed, red, "\trunc" red] & G_4^{(2)} \arrow[r]      & G_5^{(2)} \arrow[r]       & \cdots 
		\\
		&	                                         &                                               & \boxed{\red{G_k^{(k)}}} \arrow[r, red]                     & \red{G_{k+1}^{(k)}} \arrow[r] \arrow[d, red, "\trunc" red] & G_{k+2}^{(k)} \arrow[r]       & \cdots 
		\\
		&	                                         &                                               &                                               & \boxed{\red{G_{k+1}^{(k+1)}}} \arrow[r, red] & \red{G_{k+2}^{(k+1)}} \arrow[r] \arrow[d, red, "\trunc" red] & \cdots \\
		&	                                        	&&&&\quad&                                                                                
	\end{tikzcd}
	\caption{Truncation in dSDA$\rm{_t}$}
	\label{fig:truncation-routine:}
\end{figure}

Occasionally,  we write $\widetilde G_j\equiv G_j^{(j)}$, $j=1, 2, \cdots$, 
the truncated matrices of $G_j^{(j-1)}$, where 
$G_1^{(0)}:=G_1$. It is worthwhile to point out that  
in Figure~\ref{fig:truncation-routine:} only those terms in boxes  are actually computed, 
 and we shall produce a formula for the  short-cut from $G_{k}^{(k)}$ to $G_{k+1}^{(k+1)}$, 
without going through $G_{k+1}^{(k)}$. 
This section also contains the details of the truncation of 
$G_{k}^{(k-1)}$ and $H_{k}^{(k-1)}$ to $G_{k}^{(k)}$ and $H_{k}^{(k)}$ respectively, 
and the general form of $G_{j}^{(k)}$ and $H_{j}^{(k)}$, for $k\geq 1$ and $j>k$. 
These details are difficult to obtain but  indispensable for the understanding and analysis of the dSDA$\rm{_t}$. 

It is worth noting that the truncation technique in the dSDA$\rm{_t}$ is extendable to other 
associated problems solvable by the dSDA, such as the DAREs and the Bethe-Salpeter eigenvalue problems.

\subsection{Truncation}\label{ssec:truncation-process}

\subsubsection{Truncating \texorpdfstring{$G_1$}{G1} and \texorpdfstring{$H_1$}{H1}}\label{sssec:truncating-g_1-and-h_1-}

Let the QR factorizations with column pivoting of $[U_0, U_1]$ and  
$[V_0, V_1]$, respectively,  be 
\begin{align*}
	[U_0, U_1] (P_1^U)^{\T}=Q_1^U R_1^U, \qquad 
[V_0, V_1] (P_1^V)^{\T}=Q_1^V R_1^V, 
\end{align*}
where $P_1^U\in \mathbb{R}^{2m\times 2m}$ and $P_1^V\in \mathbb{R}^{2l\times 2l}$ are permutations,  
$Q_1^U\in \mathbb{R}^{n\times p_1}$, $R_1^U\in \mathbb{R}^{p_1\times 2m}$ with $p_1\le 2m$,  
$Q_1^V\in \mathbb{R}^{n\times q_1}$, $R_1^V\in \mathbb{R}^{q_1\times 2l}$ with $q_1\le 2l$.  
Next  construct the SVD: $Y_1 = U_1^Y\Sigma_1^Y (V_1^Y)^{\T}$, 
where $U_1^Y\in \mathbb{R}^{2m \times 2m}$, $\Sigma_1^Y\in \mathbb{R}^{2m\times 2l}$ and 
$V_1^Y\in \mathbb{R}^{2l\times 2l}$. 
Let $\Upsilon_1^G=I_{2m}+\Sigma_1^Y(\Sigma_1^Y)^{\T} >0$ and 
$\Upsilon_1^H=I_{2l}+(\Sigma_1^Y)^{\T} \Sigma_1^Y >0$, we compute    
the SVDs: 
\begin{equation}\label{eq:svd1}
	\begin{aligned}
		R_1^UP_1^U U_1^Y (\Upsilon_1^G)^{-1/2} 
		= \Theta_1^G \Sigma_1^G (\Phi_1^G)^{\T}, 
		& \quad 
		R_1^VP_1^V V_1^Y (\Upsilon_1^H)^{-1/2}
		= \Theta_1^H \Sigma_1^H (\Phi_1^H)^{\T}, 
	\end{aligned}
\end{equation}
where $\Theta_1^G, \Sigma_{1}^G\in \mathbb{R}^{p_1\times p_1}$;  
$\Theta_1^H, \Sigma_1^H\in \mathbb{R}^{q_1\times q_1}$;  
$\Phi_1^G\in \mathbb{R}^{2m \times p_1}$ and $\Phi_1^H\in \mathbb{R}^{2l \times q_1}$. We then  have 
\begin{align*}
	G_1=2\gamma Q_1^U \Theta_1^G (\Sigma_1^G)^2 (\Theta_1^G)^{\T} (Q_1^U)^{\T}, 
	\qquad 
	H_1=2\gamma Q_1^V \Theta_1^H (\Sigma_1^H)^2 (\Theta_1^H)^{\T} (Q_1^V)^{\T}. 
\end{align*}
Let $\Sigma_1^G = \Sigma_{1,1}^G \oplus \Sigma_{2,1}^G$ and  
$\Sigma_1^H = \Sigma_{1,1}^H \oplus \Sigma_{2,1}^H$,    
where $\Sigma_{1,1}^G \in \mathbb{R}^{r_1^G\times r_1^G}$ and 
$\Sigma_{1,1}^H \in \mathbb{R}^{r_1^H\times r_1^H}$ with  
$\|\Sigma_{2,1}^G\|\leq \varepsilon_1 \|\Sigma_{1,1}^G\|$ and 
$\|\Sigma_{2,1}^H\|\leq \varepsilon_1 \|\Sigma_{1,1}^H\|$ for 
some small tolerance $\varepsilon_1$. Actually, 
the tolerances for $\Sigma_{2,1}^G$ and $\Sigma_{2,1}^H$ can be different 
and for simplicity we use the same.  
Write $\Theta_1^G=[\Theta_{1,1}^G, \Theta_{2,1}^G]$, 
$\Theta_1^H=[\Theta_{1,1}^H, \Theta_{2,1}^H ]$, 
$\Phi_1^G=[\Phi_{1,1}^G, \Phi_{2,1}^G]$ and  
$\Phi_1^H=[\Phi_{1,1}^H, \Phi_{2,1}^H ]$,  
where $\Theta_{1,1}^G \in \mathbb{R}^{p_1\times r_1^G}$, 
$\Theta_{1,1}^H \in \mathbb{R}^{q_1\times r_1^H}$, 
$\Phi_{1,1}^G \in \mathbb{R}^{2m \times r_1^G}$ and 
$\Phi_{1,1}^H \in \mathbb{R}^{2l \times r_1^H}$. 
With respect to the tolerance $\varepsilon_1$,  the truncated matrices of $G_1$ and $H_1$, 
respectively, are 
\begin{equation}\label{eq:widetildeG1-and-widetilde-H1}
	\begin{aligned}
		\widetilde G_1
		&\equiv 
		G_1^{(1)}=2\gamma Q_1^U \Theta_{1,1}^G (\Sigma_{1,1}^G)^2 
		(\Theta_{1,1}^G)^{\T} (Q_1^U)^{\T},   
		\\ 
		\widetilde H_1
		&\equiv 
		H_1^{(1)}=2\gamma Q_1^V \Theta_{1,1}^H (\Sigma_{1,1}^H)^2 
		(\Theta_{1,1}^H)^{\T} (Q_1^V)^{\T}.
	\end{aligned}	
\end{equation}

After truncation, we now proceed with the dSDA starting from $G_1^{(1)}$ and $H_1^{(1)}$. Before that we need to reformulate  
$G_1^{(1)}$ and $H_1^{(1)}$ in decoupled forms. 
Noting  that 
\begin{align*}
	\Theta_{1,1}^G(\Sigma_{1,1}^G)^2(\Theta_{1,1}^G)^{\T}
	&=\Theta_1^G  (I_{r_1^G} \oplus 0) (\Sigma_1^G)^2  
	(I_{r_1^G} \oplus 0)  (\Theta_1^G)^{\T},
	\\
	\Theta_{1,1}^H(\Sigma_{1,1}^H)^2(\Theta_{1,1}^H)^{\T}
	&=\Theta_1^H   (I_{r_1^H} \oplus 0)   (\Sigma_1^H)^2   
	(I_{r_1^H} \oplus 0)   (\Theta_1^H)^{\T}, 
\end{align*}
then \eqref{eq:svd1}  and \eqref{eq:widetildeG1-and-widetilde-H1} imply 
\[ 
	G_1^{(1)}\equiv 2\gamma \mathcal{Q}_1^U 
	\left(I_{2m}+Y_1 Y_1^{\T}\right)^{-1} (\mathcal{Q}_1^U)^{\T},
	\ \ \ 
	H_1^{(1)}\equiv  2\gamma \mathcal{Q}_1^V 
	\left(I_{2l}+ Y_1^{\T} Y_1 \right)^{-1} (\mathcal{Q}_1^V)^{\T},
\] 
where $\mathcal{Q}_1^U:=Q_1^U \Theta_1^G    
(I_{r_1^G}\oplus 0)   (\Theta_1^G)^{\T} R_1^UP_1^U$ and 
$\mathcal{Q}_1^V:=Q_1^V \Theta_1^H    
(I_{r_1^H}\oplus 0)   (\Theta_1^H)^{\T} R_1^V P_1^V$. 

Denoting $A_1^{(1)}:=\widetilde A_{\gamma}^{2}-
2\gamma \mathcal{Q}_1^U (I_{2m}+Y_1 Y_1^{\T})^{-1}Y_1 (\mathcal{Q}_1^V)^{\T}$, 
\begin{align*}
	&\widehat{\mathcal{X}}_k^{U, (1)}: = 
	\left[\mathcal{Q}_1^U, \widetilde A_{\gamma}^2 \mathcal{Q}_1^U, 
	\cdots, \widetilde A_{\gamma}^{2^k-2} \mathcal{Q}_1^U\right],
	\, 
	\widehat{\mathcal{X}}_k^{V,(1)} := 
	\left[	\mathcal{Q}_1^V, (\widetilde A_{\gamma}^{\T})^2 \mathcal{Q}_1^V, 
	\cdots, (\widetilde A_{\gamma}^{\T})^{2^k-2}\mathcal{Q}_1^V \right], 
	\\  
	&\mathcal{X}_{k}^{U,(1)}:= 
	\left[ Q_1^U \Theta_{1,1}^G, \widetilde A_{\gamma}^2 Q_1^U \Theta_{1,1}^G, 
	\cdots, \widetilde A_{\gamma}^{2^k-2} Q_1^U \Theta_{1,1}^G\right], 
	\\ 
	&\mathcal{X}_{k}^{V,(1)}:=
	\left[Q_1^V \Theta_{1,1}^H, (\widetilde A_{\gamma}^{\T})^2 Q_1^V \Theta_{1,1}^H, 
	\cdots, (\widetilde A_{\gamma}^{\T})^{2^k-2} Q_1^V \Theta_{1,1}^H\right], 
	\end{align*}
 and $M_0^G:=(\Theta_{1,1}^G)^{\T}R_1^UP_1^U$, 
$M_0^H:=(\Theta_{1,1}^H)^{\T}R_1^VP_1^V$,  
then from $A_1^{(1)}$,  $G_1^{(1)}$ and $H_1^{(1)}$, 
the dSDA in \eqref{eq:care-iteration} produces  ($k\ge 2$):  
\begin{equation}\label{eq:Hk1}
	\begin{aligned}
		G_k^{(1)} 
		&= 
		2\gamma 
		\widehat{\mathcal{X}}_k^{U,(1)} 
		E(Y_k^{(1)}) 
		(\widehat{\mathcal{X}}_k^{U,(1)})^{\T}
		\\
		&\equiv
		2\gamma 
		\mathcal{X}_k^{U,(1)}
		(I_{2^{k-1}}\otimes M_0^G)
		E(Y_k^{(1)}) 
		(I_{2^{k-1}}\otimes M_0^G)^{\T}
		(\mathcal{X}_k^{U,(1)})^{\T},  
		\\
		H_k^{(1)} 
		&= 
		2\gamma \widehat{\mathcal{X}}_k^{V,(1)} 
		F(Y_k^{(1)}) 
		(\widehat{\mathcal{X}}_k^{V,(1)})^{\T}
		\\
		&\equiv
		2\gamma
		\mathcal{X}_k^{V,(1)}
		(I_{2^{k-1}}\otimes M_0^H)
		F(Y_k^{(1)}) 
		(I_{2^{k-1}}\otimes M_0^H)^{\T}
		(\mathcal{X}_k^{V,(1)})^{\T}, 
	\end{aligned}	
\end{equation}
where 
\begin{align*}
	&E(Y_k^{(1)}) := [I_{2^km} + Y_k^{(1)}(Y_k^{(1)})^{\T}]^{-1}, 
	\qquad F(Y_k^{(1)}) := [I_{2^kl} + (Y_k^{(1)})^{\T}Y_k^{(1)}]^{-1},
\end{align*}
with  $Y_j^{(1)}=\begin{bmatrix}
	0& Y_{j-1}^{(1)}\\ Y_{j-1}^{(1)}  & 2\gamma T_{j-1}^{(1)}
\end{bmatrix}\in \mathbb{R}^{2^jm\times 2^jl}$, $Y_1^{(1)}\equiv Y_1$ and 
\[
	T_j^{(1)} = (I_{2^{j-1}}\otimes M_0^G)^{\T}  
	(\mathcal{X}_j^{U,(1)})^{\T}\mathcal{X}_j^{V,(1)}(I_{2^{j-1}}\otimes M_0^H).
\]

\subsubsection{Truncating \texorpdfstring{$G_2^{(1)}$}{G2(1)} and \texorpdfstring{$H_2^{(1)}$}{H2(1)}}\label{sssec:truncating-g_2-and-h_2-}

From~\eqref{eq:Hk1},  we know that  
\begin{align*}
	G_2^{(1)} &= 2\gamma \mathcal{X}_2^{U,(1)} 
	(I_2 \otimes M_0^G) 
	E(Y_2^{(1)}) 
	(I_2 \otimes M_0^G)^{\T} (\mathcal{X}_2^{U,(1)})^{\T}, 
	\\
	H_2^{(1)} &= 2\gamma \mathcal{X}_2^{V,(1)} 
	(I_2 \otimes M_0^H) 
	F(Y_2^{(1)})
	(I_2 \otimes M_0^H)^{\T} (\mathcal{X}_2^{V,(1)})^{\T}. 
\end{align*}
Write $\Gamma:=(I_{2m}+Y_1Y_1^{\T})^{-1}Y_1 (T_1^{(1)})^{\T}$  and  
\[
	\Psi_1:= I_{2m}+Y_1Y_1^{\T} + 4\gamma^2 T_1^{(1)}(I_{2l} + Y_1^{\T} Y_1)^{-1}(T_1^{(1)})^{\T},
\]
then from the definition of $Y_2^{(1)}$, we have  
\begin{align*}
	E(Y_2^{(1)}) 
	=&
	\begin{bmatrix}
		I_{2m}&-2\gamma \Gamma \\ 0&I_{2m}
	\end{bmatrix}
	\left[ (I_{2m} + Y_1 Y_1^{\T})^{-1} \oplus  \Psi_1^{-1} \right]
	\begin{bmatrix}
		I_{2m}&0\\-2\gamma \Gamma^{\T} & I_{2m}
	\end{bmatrix}.
\end{align*}
With $\Omega_1=(Q_1^U\Theta_{1,1}^G)^{\T} Q_1^V \Theta_{1,1}^H$ and  
\[
L_1^{G}:=2\gamma (\Theta_{1,1}^G)^{\T} R_1^U P_1^U 
(I_{2m}+Y_1Y_1^{\T})^{-1}Y_1 (P_1^V)^{\T} (R_1^V)^{\T} \Theta_{1,1}^H \Omega_1^{\T},
\]
then subsequently by the definition of $T_1^{(1)}$, it holds that 
\begin{align*}
	&
	(I_2 \otimes M_0^G)
	E(Y_2^{(1)})
	(I_2 \otimes M_0^G)^{\T}
	\\
	=&
	\begin{multlined}[t]
		\begin{bmatrix}
			I_{r_1^G}&-L_1^{G}	\\ 0&I_{r_1^G}   
		\end{bmatrix}
		(I_2 \otimes M_0^G)
		\left[ (I_{2m} + Y_1 Y_1^{\T})^{-1} \oplus  \Psi_1^{-1}\right]
		(I_2 \otimes M_0^G)^{\T}
		\begin{bmatrix}
			I_{r_1^G}&	-L_1^{G} \\ 0 &I_{r_1^G}
		\end{bmatrix}^{\T}
	\end{multlined}
	\\
	\equiv&
	\begin{multlined}[t]
		\begin{bmatrix}
			I_{r_1^G}&-L_1^{G}		\\ 0&I_{r_1^G}   
		\end{bmatrix}
		\left\{ 
			(\Sigma_{1,1}^G)^2 \oplus 
			\left[ \Sigma_{1,1}^G \left(I_{r_1^G}+4\gamma^2\Sigma_{1,1}^G \Omega_1 
			(\Sigma_{1,1}^H)^2 \Omega_1^{\T}\Sigma_{1,1}^G \right)^{-1}\Sigma_{1,1}^G \right]
		\right\} 
		\\
		\cdot
		\begin{bmatrix}
			I_{r_1^G} & -L_1^{G} \\ 0	& I_{r_1^G}
		\end{bmatrix}^{\T}.
	\end{multlined}
\end{align*}
Similarly, with $L_1^{H}:=2\gamma (\Theta_{1,1}^H)^{\T} R_1^V P_1^V 
(I_{2l}+ Y_1^{\T} Y_1)^{-1} Y_1^{\T} (P_1^U)^{\T} (R_1^U)^{\T} \Theta_{1,1}^G \Omega_1$,   
we have 
	\begin{align*}
		&
		(I_2\otimes M_0^H)
		F(Y_2^{(1)}) 
		(I_2\otimes M_0^H)^{\T}
		\\
		=&
		\begin{multlined}[t]
			\begin{bmatrix}
				I_{r_1^H}&-L_1^{H}	\\ 0&I_{r_1^H}   
			\end{bmatrix}
			\left\{
				(\Sigma_{1,1}^H)^2 \oplus
				\left[\Sigma_{1,1}^H
					\left(I_{r_1^H}+4\gamma^2 \Sigma_{1,1}^H 
					\Omega_1^{\T}(\Sigma_{1,1}^G)^2 \Omega_1 \Sigma_{1,1}^H \right)^{-1} 
				\Sigma_{1,1}^H \right]
			\right\} 
			 \\
			 \cdot
			 \begin{bmatrix}
				I_{r_1^H}&-L_1^{H}\\ 0 & I_{r_1^H}
			\end{bmatrix}^{\T}.
		\end{multlined}
	\end{align*}

Compute the QR factorizations   using the  modified Gram-Schmidt process: 
\begin{equation}\label{eq:qr2}
	\begin{aligned}
		\mathcal{X}_2^{U,(1)}
		= 
		\left[Q_1^U\Theta_{1,1}^G, Q_2^U\right]
		\begin{bmatrix}
			I_{r_1^G}& R_{12}^U \\0&R_2^U
		\end{bmatrix}, 
		\qquad
		\mathcal{X}_2^{V,(1)}
		= 
		\left[Q_1^V \Theta_{1,1}^H, Q_2^V\right]
		\begin{bmatrix}
			I_{r_1^H}& R_{12}^V \\0&R_2^V
		\end{bmatrix}, 
	\end{aligned}
\end{equation}
where $Q_2^U\in \mathbb{R}^{n\times (p_2-r_1^G)}$, 
$Q_2^V\in \mathbb{R}^{n\times (q_2-r_1^H)}$, 
$R_{12}^U\in \mathbb{R}^{r_1^G \times r_1^G}$, 
$R_2^U\in \mathbb{R}^{(p_2-r_1^G)\times r_1^G}$,  
$R_{12}^V\in \mathbb{R}^{r_1^H \times r_1^H}$ and  
$R_2^V\in \mathbb{R}^{(q_2-r_1^H)\times r_1^H}$.   Consider the SVD:  
$\Sigma_{1,1}^G \Omega_1 \Sigma_{1,1}^H = U_2^Y \Sigma_2^Y (V_2^Y)^{\T}$, 
where $U_2^Y\in \mathbb{R}^{r_1^G \times r_1^G}$, 
$\Sigma_2^Y\in \mathbb{R}^{r_1^G \times r_1^H}$ and 
$V_2^Y \in \mathbb{R}^{r_1^H \times r_1^H}$, 
we then obtain 
\begin{align*}
	\Sigma_{1,1}^G \Omega_1 (\Sigma_{1,1}^H)^2 \Omega_1^{\T}\Sigma_{1,1}^G
	&= U_2^Y \Sigma_2^Y (\Sigma_2^Y)^{\T} (U_2^Y)^{\T}, 
	\\ 
	\Sigma_{1,1}^H \Omega_1^{\T}(\Sigma_{1,1}^G)^2 \Omega_1 \Sigma_{1,1}^H 
	&= V_2^Y (\Sigma_2^Y)^{\T} \Sigma_2^Y (V_2^Y)^{\T}.
\end{align*}
Now let $\Upsilon_2^G:= I_{r_1^G}+4\gamma^2 \Sigma_2^Y(\Sigma_2^Y)^{\T}$ 
and  $\Upsilon_2^H:= I_{r_1^H}+4\gamma^2 (\Sigma_2^Y)^{\T}\Sigma_2^Y$. 
Consider further   the SVDs:
\begin{equation}\label{eq:svd2_H}
	\begin{aligned}
	&\begin{bmatrix}
		I_{r_1^G}&R_{12}^U\\0&R_2^U
	\end{bmatrix}
	\begin{bmatrix}
		I_{r_1^G}&-L_1^{G}		\\ 0&I_{r_1^G}   
	\end{bmatrix}
	\left\{
		\Sigma_{1,1}^G \oplus 
		\left[\Sigma_{1,1}^G U_2^Y (\Upsilon_2^G)^{-1/2}\right]
	\right\}
	=
	\Theta_2^G \Sigma_2^G (\Phi_2^G)^{\T},
	\\
	&\begin{bmatrix}
		I_{r_1^H}&R_{12}^V\\0&R_2^V
	\end{bmatrix}
	\begin{bmatrix}
		I_{r_1^H}&-L_1^{H}		\\ 0&I_{r_1^H}   
	\end{bmatrix}
	\left\{
		\Sigma_{1,1}^H \oplus 
		\left[ \Sigma_{1,1}^H V_2^Y (\Upsilon_2^H)^{-1/2}\right] 
	\right\} 
	=
	\Theta_2^H \Sigma_2^H (\Phi_2^H)^{\T}, 
\end{aligned}
\end{equation}
where $\Theta_2^G, \Sigma_2^G\in \mathbb{R}^{p_2 \times p_2}$;  
$\Theta_2^H, \Sigma_2^H\in \mathbb{R}^{q_2 \times q_2}$;   
$\Phi_2^G\in \mathbb{R}^{2r_1^G \times p_2}$ and 
$\Phi_2^H\in \mathbb{R}^{2r_1^H \times q_2}$. 
We obtain  
\begin{align*}
	G_2^{(1)} = 
	2\gamma \widehat{\mathcal{Q}}_2^U
	\Theta_2^G (\Sigma_2^G)^2 (\Theta_2^G)^{\T}
	(\widehat{\mathcal{Q}}_2^U)^{\T},
	\qquad 
	H_2^{(1)} = 
	2\gamma \widehat{\mathcal{Q}}_2^V
	\Theta_2^H (\Sigma_2^H)^2 (\Theta_2^H)^{\T} 
	(\widehat{\mathcal{Q}}_2^V)^{\T},
\end{align*}
where $\widehat{\mathcal{Q}}_2^U:=[Q_1^U \Theta_{1,1}^G,Q_2^U]$ 
and $\widehat{\mathcal{Q}}_2^V:= [Q_1^V\Theta_{1,1}^H, Q_2^V]$.   
With $\varepsilon_2$ being some small tolerance,  write 
$\Sigma_2^G = \Sigma_{1,2}^G \oplus \Sigma_{2,2}^G$, 
$\Sigma_2^H = \Sigma_{1,2}^H \oplus \Sigma_{2,2}^H$  
with $\Sigma_{1,2}^G \in \mathbb{R}^{r_2^G\times r_2^G}$, 
$\Sigma_{1,2}^H \in \mathbb{R}^{r_2^H\times r_2^H}$, satisfying 
$\|\Sigma_{2,2}^G\|\leq \varepsilon_2 \|\Sigma_{1,2}^G\|$ and 
$\|\Sigma_{2,2}^H\|\leq \varepsilon_2 \|\Sigma_{1,2}^H\|$.   Write  
$\Theta_2^G=[\Theta_{1,2}^G, \Theta_{2,2}^G ]$, 
$\Theta_2^H=[\Theta_{1,2}^H, \Theta_{2,2}^H]$, 
$\Phi_2^G=[\Phi_{1,2}^G, \Phi_{2,2}^G ]$ and  
$\Phi_2^H=[\Phi_{1,2}^H, \Phi_{2,2}^H ]$, 
whose partitions respectively are compatible with those of $\Sigma_2^G$ and $\Sigma_2^H$, 
i.e., $\Theta_{1,2}^G \in \mathbb{R}^{p_2\times r_2^G}$, 
$\Theta_{1,2}^H \in \mathbb{R}^{q_2\times r_2^H}$, 
$\Phi_{1,2}^G \in \mathbb{R}^{2r_1^G \times r_2^G}$ and 
$\Phi_{1,2}^H \in \mathbb{R}^{2r_1^H \times r_2^H}$. 
Then the truncated matrices of $G_2^{(1)}$ and $H_2^{(1)}$, 
with respect to the  tolerance $\varepsilon_2$, respectively  are 
\begin{equation}\label{eq:wtdGH2}
	\widetilde G_2 \equiv G_2^{(2)} = 
	2\gamma \mathcal{Q}_2^U
	(\Sigma_{1,2}^G)^2 	(\mathcal{Q}_2^U)^{\T}, 
	\qquad  
	\widetilde H_2 \equiv H_2^{(2)} = 
	2\gamma  \mathcal{Q}_2^V
	(\Sigma_{1,2}^H)^2  
	(\mathcal{Q}_2^V)^{\T}, 
\end{equation}
where $\mathcal{Q}_2^U:= \widehat{\mathcal{Q}}_2^U \Theta_{1,2}^G$ and 
$\mathcal{Q}_2^V:= \widehat{\mathcal{Q}}_2^V \Theta_{1,2}^H$. 

Again, after truncation we apply the dSDA starting from $G_2^{(2)}$ and $H_2^{(2)}$.  

Substituting \eqref{eq:qr2} into $G_k^{(1)}$ and $H_k^{(1)}$ 
 in \eqref{eq:Hk1}, with 
\begin{align*}
	\widehat{\mathcal{X}}_k^{U,(2)} := 
	\left[ \widehat{\mathcal{Q}}_2^U, \widetilde A_{\gamma}^4 \widehat{\mathcal{Q}}_2^U, 
	\cdots, \widetilde A_{\gamma}^{2^k-4} \widehat{\mathcal{Q}}_2^U \right],
	\,    
	\widehat{\mathcal{X}}_k^{V,(2)} := 
	\left[\widehat{\mathcal{Q}}_2^V, (\widetilde A_{\gamma}^{\T})^4 \widehat{\mathcal{Q}}_2^V, 
	\cdots, (\widetilde A_{\gamma}^{\T})^{2^k-4} \widehat{\mathcal{Q}}_2^V \right], 	
\end{align*}
$M_1^G= \begin{bmatrix}
		I_{r_1^G}& R_{12}^U\\0&R_2^U
	\end{bmatrix} (I_2\otimes M_0^G)$ and 
$M_1^H= \begin{bmatrix}
		I_{r_1^H}& R_{12}^V\\0&R_2^V
	\end{bmatrix} (I_2 \otimes M_0^H)$,  
we reformulate  $G_k^{(1)}$ and $H_k^{(1)}$:  
\begin{equation}\label{eq:Hk1_form2}
	\begin{aligned}
	&G_k^{(1)} 
	=   
	\begin{multlined}[t]
		2\gamma \widehat{\mathcal{X}}_k^{U,(2)}
		\left(I_{2^{k-2}}\otimes M_1^G\right)	
		E(Y_k^{(1)}) 
		\left(I_{2^{k-2}}\otimes M_1^G\right)^{\T} 
		(\widehat{\mathcal{X}}_k^{U,(2)})^{\T}, 
	\end{multlined} 
	\\
	&H_k^{(1)} 
	= 
	\begin{multlined}[t]
		2\gamma \widehat{\mathcal{X}}_k^{V,(2)}
		\left(I_{2^{k-2}}\otimes M_1^H \right)	
		F(Y_k^{(1)}) 
		(I_{2^{k-2}}\otimes M_1^H)^{\T}
		(\widehat{\mathcal{X}}_k^{V,(2)})^{\T}. 
	\end{multlined} 
\end{aligned}
\end{equation}
It is clear that  
\begin{align*} 
	&\Theta_{1,2}^G (\Sigma_{1,2}^G)^2 (\Theta_{1,2}^G)^{\T}  
	=
	\Theta_2^G  (I_{r_2^G} \oplus 0) (\Sigma_2^G)^2 (I_{r_2^G} \oplus 0)(\Theta_2^G)^{\T}
	\\
	\equiv&
	\begin{multlined}[t]
		\Theta_2^G (I_{r_2^G} \oplus 0)(\Theta_2^G)^{\T} 
		\begin{bmatrix}
			I_{r_1^G}& R_{12}^U\\0 & R_2^U
		\end{bmatrix}
		\begin{bmatrix}
			I_{r_1^G}&-L_1^{G}		\\ 0&I_{r_1^G}   
		\end{bmatrix}
		\\
		\cdot
		\left\{ 	
			(\Sigma_{1,1}^G)^2 \oplus 
			\left[ \Sigma_{1,1}^G U_2^Y (\Upsilon_2^G)^{-1}(U_2^Y)^{\T}\Sigma_{1,1}^G \right] 
		\right\}
		\begin{bmatrix}
			I_{r_1^G}&-L_1^{G}		\\ 0&I_{r_1^G}   
		\end{bmatrix}^{\T}
		\begin{bmatrix}
			I_{r_1^G}&R_{12}^U\\0&R_2^U
		\end{bmatrix}^{\T}
		\\
		\cdot
		\Theta_2^G (I_{r_2^G} \oplus 0)(\Theta_2^G)^{\T}
	\end{multlined}
	\\
	\equiv &
	\widehat{M}_1^G	 
	E(Y_2^{(1)}) 
	(\widehat{M}_1^G)^{\T},
\end{align*}
where $\widehat{M}_1^G:= \Theta_2^G (I_{r_2^G} \oplus 0)(\Theta_2^G)^{\T}M_1^G$.   
Similarly, with $\widehat{M}_1^H:= \Theta_2^H (I_{r_2^H} \oplus 0)(\Theta_2^H)^{\T} M_1^H$,   
we have 
\begin{align*}
	\Theta_{1,2}^H(\Sigma_{1,2}^H)^2(\Theta_{1,2}^H)^{\T} =
	\widehat{M}_1^H
	F(Y_2^{(1)}) 
	(\widehat{M}_1^H)^{\T}.
\end{align*}
Hence we obtain 
\[ 
	G_2^{(2)}
	\equiv 
	2\gamma \widehat{\mathcal{Q}}_2^U 
	\widehat{M}_1^G 
	E(Y_2^{(1)}) 
	(\widehat{M}_1^G)^{\T}
	(\widehat{\mathcal{Q}}_2^U)^{\T}, 
	\ \ \ 
		H_2^{(2)}
	\equiv 
	2\gamma \widehat{\mathcal{Q}}_2^V 
	\widehat{M}_1^H	
	F(Y_2^{(1)}) 
	(\widehat{M}_1^H)^{\T}
	(\widehat{\mathcal{Q}}_2^V)^{\T}.
\] 

Define  $A_2^{(2)}:= \widetilde{A}_{\gamma}^4 - 2\gamma 
\widehat{\mathcal{Q}}_2^U \widehat{M}_1^G
[I_{4m}+Y_2^{(1)}(Y_2^{(1)})^{\T}]^{-1} Y_2^{(1)}
(\widehat{M}_1^H)^{\T}(\widehat{\mathcal{Q}}_2^V)^{\T}$. 
Analogously to 
\eqref{eq:Hk1_form2}, 
with 
\[
E(Y_k^{(2)}) := [I_{2^k m} + Y_k^{(2)}(Y_k^{(2)})^{\T}]^{-1}, \qquad 
F(Y_k^{(2)}) := [I_{2^k l} +(Y_k^{(2)})^{\T} Y_k^{(2)} ]^{-1},  
\]
applying the dSDA~\eqref{eq:care-iteration} starting from  
$A_2^{(2)}$, $G_2^{(2)}$ and $H_2^{(2)}$    produces 
\begin{align*}
	G_k^{(2)} 
	& = 
	2\gamma \widehat{\mathcal{X}}_k^{U,(2)}  
	(I_{2^{k-2}} \otimes \widehat{M}_1^G)
	E(Y_k^{(2)}) 
	(I_{2^{k-2}} \otimes \widehat{M}_1^G)^{\T}
	(\widehat{\mathcal{X}}_k^{U,(2)})^{\T}
	\\
	&\equiv
	2\gamma \mathcal{X}_k^{U,(2)} 
	\left[I_{2^{k-2}}\otimes  (\Theta_{1,2}^G)^{\T} M_1^G  \right] 
	E(Y_k^{(2)}) 
	\left[I_{2^{k-2}}\otimes  (M_1^G)^{\T} \Theta_{1,2}^G  \right]
	(\mathcal{X}_k^{U,(2)})^{\T}, 
	\\ 
	H_k^{(2)} 
	&= 
	2\gamma \widehat{\mathcal{X}}_k^{V,(2)} 
	(I_{2^{k-2}}\otimes \widehat{M}_1^H)
	F(Y_k^{(2)}) 
	(I_{2^{k-2}}\otimes \widehat{M}_1^H)^{\T}
	(\widehat{\mathcal{X}}_k^{V,(2)})^{\T} 
	\\
	&\equiv
	2\gamma 
	\mathcal{X}_k^{V,(2)} 
	\left[I_{2^{k-2}}\otimes  (\Theta_{1,2}^H)^{\T} M_1^H \right]
	F(Y_k^{(2)}) 
	\left[I_{2^{k-2}}\otimes   ( M_1^H)^{\T} \Theta_{1,2}^H   \right]
	(\mathcal{X}_k^{V,(2)})^{\T}, 
\end{align*}
where  
\begin{align*}
	\mathcal{X}_k^{U,(2)}:=\left[\mathcal{Q}_2^U, \widetilde{A}_{\gamma}^4 \mathcal{Q}_2^U, 
	\cdots, \widetilde{A}_{\gamma}^{2^k-4}\mathcal{Q}_2^U \right],
	\, 
\mathcal{X}_k^{V,(2)}:=\left[\mathcal{Q}_2^V, (\widetilde{A}_{\gamma}^{\T})^4 \mathcal{Q}_2^V, 
	\cdots, (\widetilde{A}_{\gamma}^{\T})^{2^k-4}\mathcal{Q}_2^V \right],
\end{align*}
$Y_j^{(2)}=\begin{bmatrix}
	0& Y_{j-1}^{(2)}\\ Y_{j-1}^{(2)}  & 2\gamma T_{j-1}^{(2)}
\end{bmatrix}$ with  $Y_2^{(2)}\equiv Y_2^{(1)}$ and 
\[
	T_j^{(2)} = 
	[I_{2^{j-2}}\otimes (M_1^G)^{\T} \Theta_{1,2}^G ]
	(\mathcal{X}_j^{U,(2)})^{\T} 
	\mathcal{X}_j^{V,(2)} 
	[I_{2^{j-2}}\otimes  (\Theta_{1,2}^H)^{\T} M_1^H ].
\] 

Obviously, with $E(Y_3^{(2)}) := [I+Y_3^{(2)} (Y_3^{(2)})^{\T}]^{-1}$,  
$F(Y_3^{(2)}) := [I+ (Y_3^{(2)})^{\T}Y_3^{(2)}]^{-1}$, we have 
\begin{align*}
	G_3^{(2)}& = 2\gamma \mathcal{X}_3^{U,(2)} 
	\left[I_2 \otimes (\Theta_{1,2}^G )^{\T} M_1^G\right] 
	E(Y_3^{(2)}) 
	\left[I_2 \otimes (M_1^G)^{\T} \Theta_{1,2}^G\right] (\mathcal{X}_3^{U,(2)})^{\T},
	\\
	H_3^{(2)}&= 2\gamma \mathcal{X}_3^{V,(2)} 
	\left[I_2 \otimes (\Theta_{1,2}^H )^{\T} M_1^H\right] 
	F(Y_3^{(2)}) 
	\left[I_2 \otimes (M_1^H)^{\T} \Theta_{1,2}^H\right] (\mathcal{X}_3^{V,(2)})^{\T}.
\end{align*}
To get $\widetilde{G}_3 \equiv G_3^{(3)}$ and $\widetilde{H}_3\equiv H_3^{(3)}$, 
we need to reformulate the kernels 
\begin{align*}
	&\left[I_2 \otimes (\Theta_{1,2}^G )^{\T} M_1^G\right] 
E(Y_3^{(2)}) 
\left[I_2 \otimes (M_1^G)^{\T} \Theta_{1,2}^G\right], 
\\ 
&\left[I_2 \otimes (\Theta_{1,2}^H )^{\T} M_1^H\right] 
F(Y_3^{(2)}) 
\left[I_2 \otimes (M_1^H)^{\T} \Theta_{1,2}^H\right],
\end{align*}
and  compute the QR factorizations of the column spaces  
$\mathcal{X}_3^{U,(2)}$  and $\mathcal{X}_3^{V,(2)}$.  
The  details for the general cases can be found in the next section. 

\subsubsection{Truncating \texorpdfstring{$G_{j+1}^{(j)}$}{Gj+1(j)} and \texorpdfstring{$H_{j+1}^{(j)}$}{Hj+1(j)}}\label{sssec:truncating-g_j-and-h_j-}

Generalizing the results in the previous section, 
with respect to some small tolerance $\varepsilon_j$,  
we  truncate $G_{j}^{(j-1)}$ and $H_j^{(j-1)}$  respectively to  
\begin{equation}\label{eq:wtdH_j}
\begin{aligned}
	\widetilde G_j \equiv G_j^{(j)}
	&=2\gamma \mathcal{Q}_{j}^U (\Sigma_{1,j}^G)^2  	
	(\mathcal{Q}_j^U)^{\T},\ \ \  \mathcal{Q}_j^U\in \mathbb{R}^{n\times r_j^G}, \ \  
	\Sigma_{1,j}^G\in \mathbb{R}^{r_j^G \times r_j^G}, 
	\\
	\widetilde H_j \equiv H_j^{(j)}
	&=2\gamma \mathcal{Q}_{j}^V (\Sigma_{1,j}^H)^2  (\mathcal{Q}_j^V)^{\T}, \ \ \  
	\mathcal{Q}_j^V\in \mathbb{R}^{n\times r_j^H}, \ \ 
	\Sigma_{1,j}^H\in \mathbb{R}^{r_j^H \times r_j^H}. 
\end{aligned}
\end{equation}
By the dSDA~\eqref{eq:care-iteration}, with 
\begin{align*}
	& 
	E(Y_k^{(j)}) := [I_{2^k m} +  Y_k^{(j)}(Y_k^{(j)})^{\T}]^{-1}, 
	\qquad \quad 
	F(Y_k^{(j)}) := [I_{2^k l} + (Y_k^{(j)})^{\T} Y_k^{(j)}]^{-1}, 
	\\
	&\mathcal{X}_k^{U,(j)} := 
	\left[\mathcal{Q}_j^U, \widetilde A_{\gamma}^{2^j} \mathcal{Q}_j^U, 
	\cdots, \widetilde A_{\gamma}^{2^k-2^j} \mathcal{Q}_j^U\right], 
	\\
	&\mathcal{X}_k^{V,(j)} := 
	\left[\mathcal{Q}_j^V, (\widetilde A_{\gamma}^{\T})^{2^j} \mathcal{Q}_j^V, 
	\cdots, (\widetilde A_{\gamma}^{\T})^{2^k-2^j} \mathcal{Q}_j^V\right],
\end{align*}
it  produces the  following iterates: (for $k>j$) 
\begin{align}
	G_k^{(j)} 
	&=
	2\gamma 
	\mathcal{X}_k^{U,(j)}
	\left[I_{2^{k-j}}\otimes (\Theta_{1,j}^G)^{\T} M_{j-1}^G \right] 
	E(Y_k^{(j)})
	\left[I_{2^{k-j}}\otimes  (M_{j-1}^G)^{\T} \Theta_{1,j}^G \right]
	(\mathcal{X}_k^{U,(j)})^{\T}, \nonumber 
	\\
	H_k^{(j)} 
	&= 
	2\gamma 
	\mathcal{X}_k^{V,(j)}
	\left[I_{2^{k-j}}\otimes (\Theta_{1,j}^H)^{\T} M_{j-1}^H \right]
	F(Y_k^{(j)}) 
	\left[I_{2^{k-j}}\otimes  (M_{j-1}^H)^{\T} \Theta_{1,j}^H \right]
	(\mathcal{X}_k^{V,(j)})^{\T}, 
	\label{eq:H_k_j}
\end{align}
with 
$ 
	T_k^{(j)} = 
	[I_{2^{k-j}}\otimes (M_{j-1}^G)^{\T} \Theta_{1,j}^G]
	(\mathcal{X}_k^{U,(j)})^{\T} 
	\mathcal{X}_k^{V,(j)}
	[I_{2^{k-j}}\otimes (\Theta_{1,j}^H)^{\T} M_{j-1}^H], 
$ 
$Y_j^{(j)}\equiv Y_j^{(j-1)}$ and  
$Y_{k}^{(j)}=\begin{bmatrix}
	0& Y_{k-1}^{(j)}\\ Y_{k-1}^{(j)} & 2\gamma T_{k-1}^{(j)}
\end{bmatrix}$,   
satisfying       
\begin{equation}\label{eq:Sigma_j_H}
\begin{aligned}
	(\Theta_{1,j}^G)^{\T} M_{j-1}^G
	E(Y_j^{(j)}) 
	(M_{j-1}^G)^{\T} \Theta_{1,j}^G & \equiv (\Sigma_{1,j}^G)^2, 
	\\
	(\Theta_{1,j}^H)^{\T} M_{j-1}^H 
	F(Y_j^{(j)}) 
	(M_{j-1}^H)^{\T} \Theta_{1,j}^H  &\equiv (\Sigma_{1,j}^H)^2. 
\end{aligned}	
\end{equation}
As shown in Figure~\ref{fig:truncation-routine:}, we now  truncate $G_{j+1}^{(j)}$ and 
$H_{j+1}^{(j)}$ respectively to    $\widetilde G_{j+1}\equiv G_{j+1}^{(j+1)}$ 
and $\widetilde H_{j+1}\equiv H_{j+1}^{(j+1)}$,  then apply the dSDA in \eqref{eq:care-iteration} 
to produce  the iterations  $G_k^{(j+1)}$ 
and $H_{k}^{(j+1)}$ ($k>j+1$), where $G_{j+1}^{(j+1)}$ and $H_{j+1}^{(j+1)}$ are 
the initial iterates. 
From 
\eqref{eq:H_k_j} we have 
\begin{small}
	\begin{equation}\label{eq:H_j+1-j}
\begin{aligned}
	G_{j+1}^{(j)} = 2\gamma 
	\mathcal{X}_{j+1}^{U,(j)}
	\left[I_{2}\otimes (\Theta_{1,j}^G)^{\T} M_{j-1}^G \right] 
	E(Y_{j+1}^{(j)})
	\left[I_{2}\otimes  (M_{j-1}^G)^{\T} \Theta_{1,j}^G \right]
	(\mathcal{X}_{j+1}^{U,(j)})^{\T}, 
	\\
	H_{j+1}^{(j)} = 2\gamma 
	\mathcal{X}_{j+1}^{V,(j)}
	\left[I_{2}\otimes (\Theta_{1,j}^H)^{\T} M_{j-1}^H \right] 
	F(Y_{j+1}^{(j)}) 
	\left[I_{2}\otimes  (M_{j-1}^H)^{\T} \Theta_{1,j}^H \right]
	(\mathcal{X}_{j+1}^{V,(j)})^{\T}. 
\end{aligned}
\end{equation}
\end{small}
Define $\Psi_j:= I_{2^j m}+Y_j^{(j)}(Y_j^{(j)})^{\T} + 
4\gamma^2 T_j^{(j)} [I_{2^j l}+(Y_j^{(j)})^{\T}Y_j^{(j)}]^{-1} (T_j^{(j)})^{\T}$ 
and  $\Omega_j:= (\mathcal{Q}_j^U)^{\T} \mathcal{Q}_j^V$.  
Since  
\eqref{eq:Sigma_j_H} and the Sherman-Morrison-Woodbury formula (SMWF) indicate  
\begin{align*}
	&
	(\Theta_{1,j}^G)^{\T} M_{j-1}^G \Psi_j^{-1}
	(M_{j-1}^G)^{\T} \Theta_{1,j}^G
	\\
	=& 
	(\Sigma_{1,j}^G)^2 -4\gamma^2 (\Sigma_{1,j}^G)^2 
	\left[I_{r_j^G} + 4\gamma^2 \Omega_j 
	(\Sigma_{1,j}^H)^2 (\Omega_j)^{\T}(\Sigma_{1,j}^G)^2\right]^{-1}
	\Omega_j (\Sigma_{1,j}^H)^2 \Omega_j^{\T} (\Sigma_{1,j}^G)^2
	\\
	\equiv&
	\Sigma_{1,j}^G\left[I_{r_j^G}+4\gamma^2 
		\Sigma_{1,j}^G \Omega_j (\Sigma_{1,j}^H)^2 \Omega_j^{\T} 
	\Sigma_{1,j}^G\right]^{-1} \Sigma_{1,j}^G, 
\end{align*} 
then with 
\begin{align}\label{eq:L-j-UV}
	L_j^{G}&:=2\gamma(\Theta_{1,j}^G)^{\T} M_{j-1}^G 
	\left[I_{2^jm}+Y_j^{(j)}(Y_j^{(j)})^{\T}\right]^{-1}Y_j^{(j)}
	(M_{j-1}^H)^{\T} \Theta_{1,j}^H \Omega_j^{\T}, 
\end{align}
we deduce that 
	\begin{equation}\label{eq:inverse_YZ_j}
	\begin{aligned}
		&
		\left[I_2\otimes (\Theta_{1,j}^G)^{\T} M_{j-1}^G\right] 
		E(Y_{j+1}^{(j)}) 
		\left[I_2\otimes  (M_{j-1}^G)^{\T} \Theta_{1,j}^G\right]
		\\
		=&
		\begin{multlined}[t]
			\begin{bmatrix}
				I_{r_j^G}&
				-L_j^{G}
				\\ 0&I_{r_j^G}   
			\end{bmatrix}
			\left[I_2\otimes (\Theta_{1,j}^G)^{\T} M_{j-1}^G\right]
			\left[ 
				E(Y_{j}^{(j)}) 
				\oplus \Psi_j^{-1} 
			\right]
			\left[I_2\otimes  (M_{j-1}^G)^{\T} \Theta_{1,j}^G\right]
			\\
			\cdot
			\begin{bmatrix}
				I_{r_j^G}& -L_j^{G} \\ 
				0 & I_{r_j^G}
			\end{bmatrix}^{\T}
		\end{multlined}
		\\
		\equiv&
		\begin{multlined}[t]
			\begin{bmatrix}
				I_{r_j^G}&
				-L_j^{G}
				\\ 0&I_{r_j^G}   
			\end{bmatrix}
			\left\{
				(\Sigma_{1,j}^G)^2 \oplus 
				\left[
					\Sigma_{1,j}^G
					(I_{r_j^G}+4\gamma^2 \Sigma_{1,j}^G \Omega_j 
				(\Sigma_{1,j}^H)^2 \Omega_j^{\T} \Sigma_{1,j}^G )^{-1} \Sigma_{1,j}^G \right] 
			\right\}
			\\
			\cdot
			\begin{bmatrix}
				I_{r_j^G} &	-L_j^{G} \\
				0 & I_{r_j^G}
			\end{bmatrix}^{\T}. 
		\end{multlined} 
	\end{aligned} 
\end{equation}
Similarly, with $L_j^{H}:=2\gamma(\Theta_{1,j}^H)^{\T} M_{j-1}^H
\left[I_{2^jl}+(Y_j^{(j)})^{\T} Y_j^{(j)}\right]^{-1}
(Y_j^{(j)})^{\T} (M_{j-1}^G)^{\T} \Theta_{1,j}^G \Omega_j$, we obtain 
\begin{equation}\label{eq:inverse_ZY_j}
	\begin{aligned} 
		&\left[I_2\otimes (\Theta_{1,j}^H)^{\T} M_{j-1}^H\right]
		F(Y_{j+1}^{(j)}) 
		\left[I_2\otimes  (M_{j-1}^H)^{\T} \Theta_{1,j}^H\right]
		\\
		\equiv&
		\begin{multlined}[t]
			\begin{bmatrix}
				I_{r_j^H}&-L_j^{H}	\\ 0&I_{r_j^H}      
			\end{bmatrix}
			\left\{	
				(\Sigma_{1,j}^H)^2 \oplus  
				\left[ \Sigma_{1,j}^H
					(I_{r_j^H}+4\gamma^2  \Sigma_{1,j}^H \Omega_j^{\T}
				(\Sigma_{1,j}^G)^2 \Omega_j \Sigma_{1,j}^H )^{-1} \Sigma_{1,j}^H \right]
			\right\} 		
			\\
			\cdot
			\begin{bmatrix}
				I_{r_j^H}& -L_j^{H} \\
				0 & I_{r_j^H}
			\end{bmatrix}^{\T}.
		\end{multlined} 
	\end{aligned}
\end{equation}
By the  modified Gram-Schmidt process, compute the QR factorizations:
\begin{equation}\label{eq:qrj_U}
	\begin{aligned}
	\left[ \mathcal{Q}_j^U, \widetilde A_{\gamma}^{2^j} \mathcal{Q}_j^U \right]
	&=\left[ \mathcal{Q}_j^U, Q_{j+1}^U \right]
	\begin{bmatrix}
		I_{r_j^G} & R_{12}^{j,U} \\ 0 & R_2^{j,U}
	\end{bmatrix}, 
	\\
	\left[ \mathcal{Q}_j^V, (\widetilde A_{\gamma}^{\T})^{2^j} \mathcal{Q}_j^V \right] 
	&=\left[ \mathcal{Q}_j^V, Q_{j+1}^V \right]
	\begin{bmatrix}
		I_{r_j^H} & R_{12}^{j,V} \\ 0 & R_2^{j,V}
	\end{bmatrix}, 
	\end{aligned}
\end{equation}
where $Q_{j+1}^U\in \mathbb{R}^{n\times (p_{j+1}-r_j^G)}$,   
$Q_{j+1}^V\in \mathbb{R}^{n\times (q_{j+1}-r_j^H)}$, and 
$R_{12}^{j,U}\in \mathbb{R}^{r_j^G\times r_j^G}$, 
$R_2^{j,U}\in \mathbb{R}^{(p_{j+1}-r_j^G)\times r_j^G}$, 
$R_{12}^{j,V}\in \mathbb{R}^{r_j^H\times r_j^H}$,  
$R_2^{j,V}\in \mathbb{R}^{(q_{j+1}-r_j^H)\times r_j^H}$. 
With the SVD: 
$\Sigma_{1,j}^G \Omega_j \Sigma_{1,j}^H = U_{j+1}^Y \Sigma_{j+1}^Y (V_{j+1}^Y)^{\T}$, 
where $U_{j+1}^Y\in \mathbb{R}^{r_j^G \times r_j^G}$, 
$\Sigma_{j+1}^Y\in \mathbb{R}^{r_j^G \times r_j^H}$,  
$V_{j+1}^Y\in \mathbb{R}^{r_j^H \times r_j^H}$, 
and $\Upsilon_{j+1}^G := I_{r_j^G} + 4\gamma^2 \Sigma_{j+1}^Y(\Sigma_{j+1}^Y)^{\T}$,  
$\Upsilon_{j+1}^H := I_{r_j^H} + 4\gamma^2 (\Sigma_{j+1}^Y)^{\T} \Sigma_{j+1}^Y$,  
we calculate  the SVDs: 
\begin{equation}\label{eq:svdH_j+1}
	\begin{aligned}
		&
		\begin{bmatrix}
			I_{r_j^G}&R_{12}^{j,U}\\0&R_2^{j,U}
		\end{bmatrix}
		\begin{bmatrix}
			I_{r_j^G} & -L_j^G\\ 0 & I_{r_j^G}
		\end{bmatrix}
		\left\{
			\Sigma_{1,j}^G \oplus 
			\left[\Sigma_{1,j}^G U_{j+1}^Y (\Upsilon_{j+1}^G)^{-1/2}\right]
		\right\} 
		\\
		=&
		\Theta_{j+1}^G \Sigma_{j+1}^G (\Phi_{j+1}^G)^{\T}, 
		\\
		& 
		\begin{bmatrix}
			I_{r_j^H}&R_{12}^{j,V}\\0&R_2^{j,V}
		\end{bmatrix}
		\begin{bmatrix}
			I_{r_j^H} & -L_j^H\\ 0 & I_{r_j^H}
		\end{bmatrix}
		\left\{ 
			\Sigma_{1,j}^H \oplus 
			\left[ \Sigma_{1,j}^H V_{j+1}^Y (\Upsilon_{j+1}^{H})^{-1/2} \right]
		\right\} 
		\\
		=&
		\Theta_{j+1}^H \Sigma_{j+1}^H (\Phi_{j+1}^H)^{\T}, 
	\end{aligned}
\end{equation}
where $\Theta_{j+1}^G\in \mathbb{R}^{p_{j+1}\times p_{j+1}}$, 
$\Sigma_{j+1}^G\in \mathbb{R}^{p_{j+1}\times p_{j+1}}$, 
$\Phi_{j+1}^G\in \mathbb{R}^{2r_j^G\times p_{j+1}}$ and 
$\Theta_{j+1}^H\in \mathbb{R}^{q_{j+1}\times q_{j+1}}$, 
$\Sigma_{j+1}^H\in \mathbb{R}^{q_{j+1}\times q_{j+1}}$, 
$\Phi_{j+1}^H\in \mathbb{R}^{2r_j^H \times q_{j+1}}$. 
Write $\widehat{\mathcal{Q}}_{j+1}^U:=[ \mathcal{Q}_j^U, Q_{j+1}^U ]$ 
and $\widehat{\mathcal{Q}}_{j+1}^V:=[ \mathcal{Q}_j^V, Q_{j+1}^V ]$, 
we subsequently obtain  
\begin{align*}
	G_{j+1}^{(j)}&=2\gamma \widehat{\mathcal{Q}}_{j+1}^U
	\Theta_{j+1}^G (\Sigma_{j+1}^G)^2 (\Theta_{j+1}^G)^{\T}
	(\widehat{\mathcal{Q}}_{j+1}^U)^{\T},
	\\   
	H_{j+1}^{(j)}&=2\gamma \widehat{\mathcal{Q}}_{j+1}^V
	\Theta_{j+1}^H (\Sigma_{j+1}^H)^2 (\Theta_{j+1}^H)^{\T}
	(\widehat{\mathcal{Q}}_{j+1}^V)^{\T}. 
\end{align*}

Let  $\varepsilon_{j+1}$ be  a small tolerance  and write 
$\Sigma_{j+1}^G =\Sigma_{1,j+1}^G \oplus \Sigma_{2,j+1}^G$,  
$\Sigma_{j+1}^H =\Sigma_{1,j+1}^H \oplus \Sigma_{2,j+1}^H$,  
where $\Sigma_{1,j+1}^G \in \mathbb{R}^{r_{j+1}^G\times r_{j+1}^G}$, 
$\Sigma_{1,j+1}^H \in \mathbb{R}^{r_{j+1}^H\times r_{j+1}^H}$, satisfying 
\[
\|\Sigma_{2,j+1}^G\|\leq \varepsilon_{j+1} \|\Sigma_{1,j+1}^G\|, \quad  
\|\Sigma_{2,j+1}^H\|\leq \varepsilon_{j+1} \|\Sigma_{1,j+1}^H\|.
\]  
Partition $\Theta_{j+1}^G=[\Theta_{1,j+1}^G, \Theta_{2,j+1}^G ]$,  
$\Theta_{j+1}^H=[\Theta_{1,j+1}^H, \Theta_{2,j+1}^H ]$,   
$\Phi_{j+1}^G=[\Phi_{1,j+1}^G, \Phi_{2,j+1}^G ]$ and  
$\Phi_{j+1}^H=[\Phi_{1,j+1}^H, \Phi_{2,j+1}^H ]$,    
compatibly with those in $\Sigma_{j+1}^G$ and $\Sigma_{j+1}^H$, 
with $\Theta_{1,j+1}^G \in \mathbb{R}^{p_{j+1}\times r_{j+1}^G}$, 
$\Theta_{1,j+1}^H \in \mathbb{R}^{q_{j+1}\times r_{j+1}^H}$, 
$\Phi_{1,j+1}^G \in \mathbb{R}^{2r_j^G \times r_{j+1}^G}$ and 
$\Phi_{1,j+1}^H \in \mathbb{R}^{2r_j^H \times r_{j+1}^H}$. 
With respect   to $\varepsilon_{j+1}$, 
$G_{j+1}^{(j)}$ and $H_{j+1}^{(j)}$ are truncated respectively to 
\begin{equation}\label{eq:wtdH_j+1}
\begin{aligned}
	\widetilde G_{j+1}\equiv G_{j+1}^{(j+1)}
	=2\gamma \widehat{\mathcal{Q}}_{j+1}^{U} 
	\Theta_{1,j+1}^G (\Sigma_{1,j+1}^G)^2 (\Theta_{1,j+1}^G)^{\T}
	(\widehat{\mathcal{Q}}_{j+1}^{U})^{\T}, 
	\\ 
	\widetilde H_{j+1}\equiv H_{j+1}^{(j+1)}
	= 2\gamma  \widehat{\mathcal{Q}}_{j+1}^{V} 
	\Theta_{1,j+1}^H (\Sigma_{1,j+1}^H)^2 (\Theta_{1,j+1}^H)^{\T}
	(\widehat{\mathcal{Q}}_{j+1}^{V})^{\T}.  
\end{aligned}
\end{equation}

Next we reformulate $G_{j+1}^{(j+1)}$ and $H_{j+1}^{(j+1)}$ 
and then  generate $G_k^{(j+1)}$ and $H_k^{(j+1)}$ by the dSDA~\eqref{eq:care-iteration}, 
starting from $G_{j+1}^{(j+1)}$ and $H_{j+1}^{(j+1)}$. 
Define 
\begin{equation}\label{eq:M_j_GH}
	\begin{aligned}
		M_j^G&:=
		\begin{bmatrix}
			I_{r_j^G} & R_{12}^{j,U}\\ 0& R_2^{j,U}
		\end{bmatrix}
		\left[I_2\otimes (\Theta_{1,j}^G)^{\T} M_{j-1}^G\right], 
		\\ 
		M_j^H&:=
		\begin{bmatrix}
			I_{r_j^H} & R_{12}^{j,V}\\ 0& R_2^{j,V}
		\end{bmatrix}
		\left[I_2\otimes (\Theta_{1,j}^H)^{\T} M_{j-1}^H\right], 
	\end{aligned}
\end{equation}
and   denote 
\begin{align*}
	\widehat{\mathcal{X}}_k^{U,(j+1)} &:= 
	\left[ \widehat{\mathcal{Q}}_{j+1}^U, 
		\widetilde A_{\gamma}^{2^{j+1}} \widehat{\mathcal{Q}}_{j+1}^U, 
	\cdots, \widetilde A_{\gamma}^{2^k-2^{j+1}} \widehat{\mathcal{Q}}_{j+1}^U \right], 
	\\
	\widehat{\mathcal{X}}_k^{V,(j+1)} &:= 
	\left[ \widehat{\mathcal{Q}}_{j+1}^V, 
		(\widetilde A_{\gamma}^{\T})^{2^{j+1}} \widehat{\mathcal{Q}}_{j+1}^V, 
	\cdots, (\widetilde A_{\gamma}^{\T})^{2^k-2^{j+1}} \widehat{\mathcal{Q}}_{j+1}^V \right].
\end{align*}
Then $G_k^{(j)}$ and $H_k^{(j)}$ in \eqref{eq:H_k_j} 
can be rewritten as: 
\begin{equation}\label{eq:H_k_j_form2}
\begin{aligned}  
	G_k^{(j)} 
	&= 
	\begin{multlined}[t]
		2\gamma \widehat{\mathcal{X}}_k^{U,(j+1)}
		(I_{2^{k-j-1}}\otimes M_{j}^G) 
		E(Y_k^{(j)}) 
		\left(I_{2^{k-j-1}}\otimes  M_{j}^G \right)^{\T}
		(\widehat{\mathcal{X}}_k^{U,(j+1)})^{\T},  
	\end{multlined}
	\\
	H_k^{(j)} 
	&= 
	\begin{multlined}[t]
		2\gamma \widehat{\mathcal{X}}_k^{V,(j+1)}  
		(I_{2^{k-j-1}}\otimes  M_{j}^H)
		F(Y_k^{(j)}) 
		\left(I_{2^{k-j-1}}\otimes  M_{j}^H \right)^{\T}
		(\widehat{\mathcal{X}}_k^{V,(j+1)})^{\T}. 
	\end{multlined} 
\end{aligned}
\end{equation}
It follows from \eqref{eq:inverse_YZ_j}, \eqref{eq:inverse_ZY_j},  
\eqref{eq:svdH_j+1} and the definitions 
of $M_j^G$ and $M_j^H$ in~\eqref{eq:M_j_GH}  that  
\begin{align*}
	\Theta_{1,j+1}^G (\Sigma_{1,j+1}^G)^2 (\Theta_{1,j+1}^G)^{\T}
	&=
	\widehat{M}_j^G 
	E(Y_{j+1}^{(j)}) 
	(\widehat{M}_j^G)^{\T}, 
	\\ 
	\Theta_{1,j+1}^H (\Sigma_{1,j+1}^H)^2 (\Theta_{1,j+1}^H)^{\T}
	&=
	\widehat{M}_j^{H}
	F(Y_{j+1}^{(j)}) 
	(\widehat{M}_j^H)^{\T},
\end{align*} 
where $\widehat{M}_j^G:= \Theta_{j+1}^G (I_{r_{j+1}^G}\oplus 0)
(\Theta_{j+1}^G)^{\T} M_j^G$ and $\widehat{M}_j^H:=	\Theta_{j+1}^H (I_{r_{j+1}^H}\oplus 0)
(\Theta_{j+1}^H)^{\T} M_j^H$.  
As a result,  we  can   reformulate 
\begin{equation}\label{eq:Hj+1}
	\begin{aligned}
		G_{j+1}^{(j+1)} 
		&=
		2\gamma \widehat{\mathcal{Q}}_{j+1}^U
		\widehat{M}_j^G 
		E(Y_{j+1}^{(j)})
		(\widehat{M}_j^G)^{\T}
		\left(\widehat{\mathcal{Q}}_{j+1}^U\right)^{\T},  
		\\ 
		H_{j+1}^{(j+1)} 
		&=
		2\gamma \widehat{\mathcal{Q}}_{j+1}^V
		\widehat{M}_j^H
		F(Y_{j+1}^{(j)}) 
		(\widehat{M}_j^H)^{\T} 
		\left(\widehat{\mathcal{Q}}_{j+1}^V\right)^{\T}.  
	\end{aligned}
\end{equation}
Now let  
\begin{align}\label{eq:Aj+1}
	A_{j+1}^{(j+1)} = \widetilde{A}_{\gamma}^{2^{j+1}} - 2\gamma  \widehat{\mathcal{Q}}_{j+1}^U
	\widehat{M}_j^G 
	[I_{2^{j+1}m} +  Y_{j+1}^{(j)}(Y_{j+1}^{(j)})^{\T} ]^{-1}
	Y_{j+1}^{(j)} (\widehat{M}_j^H)^{\T}  
	\left(\widehat{\mathcal{Q}}_{j+1}^V\right)^{\T}.	
\end{align}
Starting from $A_{j+1}^{(j+1)}$,  $G_{j+1}^{(j+1)}$ and $H_{j+1}^{(j+1)}$, 
similar to \eqref{eq:H_k_j_form2},   
the dSDA~\eqref{eq:care-iteration}  produces the iterations: (for $k>j+1$)
	\begin{align*}
		G_k^{(j+1)} 
		&= 
		2\gamma \widehat{\mathcal{X}}_{k}^{U,(j+1)}
		(I_{2^{k-j-1}}\otimes \widehat{M}_j^G)
		E(Y_k^{(j+1)}) 
		(I_{2^{k-j-1}}\otimes \widehat{M}_j^G)^{\T}
		(\widehat{\mathcal{X}}_{k}^{U,(j+1)})^{\T} 
		\\
		&\equiv
		\begin{multlined}[t]
			2\gamma \mathcal{X}_k^{U,(j+1)}
			\left[I_{2^{k-j-1}}\otimes	\left( (\Theta_{1,j+1}^G)^{\T} M_{j}^G \right)\right]
			E(Y_k^{(j+1)}) 
			\\
			\cdot
			\left[I_{2^{k-j-1}}\otimes \left(( M_{j}^G)^{\T}\Theta_{1,j+1}^G\right)\right]
			(\mathcal{X}_k^{U,(j+1)})^{\T}, 
		\end{multlined}
	\end{align*}
\begin{align*}
	H_k^{(j+1)} 
	&= 
	2\gamma 
	\widehat{\mathcal{X}}_{k}^{V,(j+1)}
	(I_{2^{k-j-1}}\otimes \widehat{M}_j^H)
	F(Y_k^{(j+1)})
	(I_{2^{k-j-1}}\otimes \widehat{M}_j^H)^{\T}
	(\widehat{\mathcal{X}}_{k}^{V,(j+1)})^{\T} 
	\\
	&\equiv
	\begin{multlined}[t]
		2\gamma 
		\mathcal{X}_k^{V,(j+1)}
		\left[I_{2^{k-j-1}}\otimes \left((\Theta_{1,j+1}^H)^{\T}  M_{j}^H \right)\right]
		F(Y_k^{(j+1)}) 
		\\
		\cdot
		\left[I_{2^{k-j-1}}\otimes \left((M_{j}^H)^{\T} \Theta_{1,j+1}^H\right) \right]
		(\mathcal{X}_{k}^{V,(j+1)})^{\T}, 
	\end{multlined}
\end{align*}
where
\[
E(Y_k^{(j+1)}):= [I_{2^km} +  Y_k^{(j+1)} (Y_k^{(j+1)})^{\T}]^{-1}, 
\quad 
F(Y_k^{{j+1}}):=[I_{2^k l} + (Y_k^{(j+1)})^{\T}Y_k^{(j+1)}]^{-1} 
\]
with $Y_{j+1}^{(j+1)}\equiv Y_{j+1}^{(j)}$,   
$Y_{k}^{(j+1)}=\begin{bmatrix}
	0& Y_{k-1}^{(j+1)}\\ Y_{k-1}^{(j+1)} & 2\gamma T_{k-1}^{(j+1)}
\end{bmatrix}$,  
\begin{align*}
	T_k^{(j+1)} = 
	\begin{multlined}[t]
		\left[I_{2^{k-j-1}}\otimes \left((M_j^G)^{\T} \Theta_{1,j+1}^G\right)\right]
		(\mathcal{X}_k^{U,(j+1)})^{\T}
		\mathcal{X}_k^{V,(j+1)}
		\\
		\cdot
		\left[I_{2^{k-j-1}}\otimes \left( (\Theta_{1,j+1}^H)^{\T} M_{j}^H\right)\right], 
	\end{multlined}
\end{align*}
and 
\begin{align*}
	&
	\mathcal{Q}_{j+1}^U:=\widehat{\mathcal{Q}}_{j+1}^U \Theta_{1,j+1}^G, 
	\qquad 
	\mathcal{X}_{k}^{U,(j+1)} := \left[\mathcal{Q}_{j+1}^U,   
		\widetilde{A}_{\gamma}^{2^{j+1}}\mathcal{Q}_{j+1}^U,   
	\cdots, \widetilde{A}_{\gamma}^{2^k-2^{j+1}}\mathcal{Q}_{j+1}^U \right], 
	\\
	& 
	\mathcal{Q}_{j+1}^V:= \widehat{\mathcal{Q}}_{j+1}^V \Theta_{1,j+1}^H, 
	\qquad 
	\mathcal{X}_{k}^{V,(j+1)} := \left[	\mathcal{Q}_{j+1}^V,   
		(\widetilde{A}_{\gamma}^{\T})^{2^{j+1}}\mathcal{Q}_{j+1}^V,   
	\cdots,  (\widetilde{A}_{\gamma}^{\T})^{2^k-2^{j+1}}\mathcal{Q}_{j+1}^V 	\right].
\end{align*} 

Evidently, the above iterate recursions for  $G_k^{(j+1)}$ and $H_{k}^{(j+1)}$ 
are quite similar to those for  $G_k^{(j)}$ and $H_k^{(j)}$ in~\eqref{eq:H_k_j}. 
One thing  left  is the identities analogous to
\eqref{eq:Sigma_j_H} for the index $j+1$. 

By~\eqref{eq:inverse_YZ_j} and \eqref{eq:svdH_j+1}, 
it is simple to check  that 
\begin{align*}
	&
	M_{j}^G 
	E(Y_{j+1}^{(j+1)}) 
	(M_{j}^G)^{\T} 
	\\
	=&
	\begin{multlined}[t]
		\begin{bmatrix}
			I_{r_j^G} & R_{12}^{j,U}\\ 0& R_2^{j,U}
		\end{bmatrix}
		\left[I_2\otimes \left( (\Theta_{1,j}^G)^{\T} M_{j-1}^G\right)\right]
		E(Y_{j+1}^{(j+1)}) 
		\left[I_2\otimes \left( (M_{j-1}^G)^{\T} \Theta_{1,j}^G\right)\right]
		\begin{bmatrix}
			I_{r_j^G} & R_{12}^{j,U}\\ 0& R_2^{j,U}
		\end{bmatrix}^{\T} 
	\end{multlined} 
	\\
	=&
	\begin{multlined}[t]
		\begin{bmatrix}
			I_{r_j^G} & R_{12}^{j,U}\\ 0& R_2^{j,U}
		\end{bmatrix}
		\begin{bmatrix}
			I_{r_j^G}&
			-L_j^{G}
			\\ 0&I_{r_j^G}   
		\end{bmatrix} 
		\\
		\cdot
		\left\{(\Sigma_{1,j}^G)^2 \oplus  
			\left[ \Sigma_{1,j}^G
				\left( I_{r_j^G}+4\gamma^2 \Sigma_{1,j}^G \Omega_j (\Sigma_{1,j}^H)^2 \Omega_j^{\T} 
		\Sigma_{1,j}^G \right)^{-1} \Sigma_{1,j}^G \right] \right\}
		\\
		\cdot
		\begin{bmatrix}
			I_{r_j^G}& -L_j^G
			\\
			0& I_{r_j^G}
		\end{bmatrix}^{\T}
		\begin{bmatrix}
			I_{r_j^G} & R_{12}^{j,U}\\ 0& R_2^{j,U}
		\end{bmatrix}^{\T} 
	\end{multlined} \nonumber
	\\
	=&
	\Theta_{j+1}^G (\Sigma_{j+1}^G)^2 
	(\Theta_{j+1}^G)^{\T}. \nonumber  
\end{align*}
This and similar techniques imply that
\begin{equation}\label{eq:Simga_j+1_H}
	\begin{aligned}
		(\Theta_{1,j+1}^G)^{\T}
		M_{j}^G 
		E(Y_{j+1}^{(j+1)}) 
		(M_{j}^G)^{\T}
		\Theta_{1,j+1}^G 
		\equiv 
		(\Sigma_{1,j+1}^G)^2.  
		\\
		(\Theta_{1,j+1}^H)^{\T} M_{j}^H 
		F(Y_{j+1}^{(j+1)}) 
		(M_{j}^H)^{\T} \Theta_{1,j+1}^H \equiv (\Sigma_{1,j+1}^H)^2. 
	\end{aligned}
\end{equation}

\begin{remark}\label{rk:trun-recusion}
	The truncation forms $\widetilde{G}_{j+1}$  
	and $\widetilde{H}_{j+1}$ in~\eqref{eq:wtdH_j+1} are respectively 
	similarly to $\widetilde{G}_j$ 
	and $\widetilde{H}_j$ in \eqref{eq:wtdH_j}. 
	The decoupled doubling recursions  on $G_k^{(j+1)}$ and $H_k^{(j+1)}$ are 
	in the same form as  $G_k^{(j)}$ 	and $H_k^{(j)}$ 
	given in~\eqref{eq:H_k_j}. Also, the equalities in 
	\eqref{eq:Simga_j+1_H} follow the relationships specified in  
	\eqref{eq:Sigma_j_H}. The general formulae of the truncation  
	are displayed in \eqref{eq:wtdH_j}, \eqref{eq:H_k_j} and \eqref{eq:Sigma_j_H}.     
\end{remark}

\subsubsection{Computing \texorpdfstring{$L_j^{G}$}{LjG} and \texorpdfstring{$L_j^{H}$}{LjH}}
\label{sssec:computing-l_j_H_l_j_g}

From Section~\ref{sssec:truncating-g_2-and-h_2-}, to truncate 
$G_2^{(1)}$ and $H_2^{(1)}$  to $\widetilde{G}_2$ and $\widetilde{H}_2$, 
we need to compute  $L_1^G$ and $L_1^H$. 
For the general case in each truncation step, we are required  to calculate  $L_j^G$ and $L_j^H$. 
Specifically, by \eqref{eq:H_j+1-j}, \eqref{eq:inverse_YZ_j} 
and \eqref{eq:inverse_ZY_j}, we have 
	\begin{align*}
		G_{j+1}^{(j)} &=
		\begin{multlined}[t]
			2\gamma 
			\mathcal{X}_{j+1}^{U,(j)}	
			\begin{bmatrix}
				I_{r_j^G}&
				-L_j^{G}
				\\ 0&I_{r_j^G}   
			\end{bmatrix}
			\left[ 
				(\Sigma_{1,j}^G)^2  \oplus 
				\left( \Sigma_{1,j}^G U_{j+1}^Y (\Upsilon_{j+1}^G)^{-1} (U_{j+1}^Y)^{\T}  
				\Sigma_{1,j}^G \right) 
			\right] 
			\\ \cdot
			\begin{bmatrix}
				I_{r_j^G}& -L_j^{G}
				\\ 0& I_{r_j^G}
			\end{bmatrix}^{\T} (\mathcal{X}_{j+1}^{U,(j)})^{\T},
		\end{multlined}	
		\\
		H_{j+1}^{(j)} &=
		\begin{multlined}[t]
			2\gamma 
			\mathcal{X}_{j+1}^{V,(j)}
			\begin{bmatrix}
				I_{r_j^H}&
				-L_j^{H}
				\\ 0&I_{r_j^H}   
			\end{bmatrix}  
			\left[ (\Sigma_{1,j}^H)^2   \oplus 
				\left( \Sigma_{1,j}^H V_{j+1}^Y (\Upsilon_{j+1}^H)^{-1} (V_{j+1}^Y)^{\T} 
				\Sigma_{1,j}^H \right) 
			\right] 
			\\ \cdot 
			\begin{bmatrix}
				I_{r_j^H}& -L_j^{H}\\ 
				0& I_{r_j^H}
			\end{bmatrix}^{\T} (\mathcal{X}_{j+1}^{V,(j)})^{\T},
		\end{multlined}	
	\end{align*} 
where $\mathcal{X}_{j+1}^{U,(j)}=\left[ \mathcal{Q}_j^U, \widetilde A_{\gamma}^{2^j}\mathcal{Q}_j^U \right]$, 
$\mathcal{X}_{j+1}^{V,(j)}=\left[ \mathcal{Q}_j^V, (\widetilde{A}_{\gamma}^{\T})^{2^{j}}
\mathcal{Q}_j^V \right]$, 
\[
\Upsilon_{j+1}^G = I_{r_j^G} + 4\gamma^2 \Sigma_{j+1}^Y (\Sigma_{j+1}^Y)^{\T}, 
\quad
\Upsilon_{j+1}^H = I_{r_j^H} + 4\gamma^2 (\Sigma_{j+1}^Y)^{\T} \Sigma_{j+1}^Y
\]
with $\Sigma_{1,j}^G \Omega_j \Sigma_{1,j}^H = U_{j+1}^Y \Sigma_{j+1}^Y (V_{j+1}^Y)^{\T}$.  
Consequently, to obtain the truncated iterates $\widetilde{G}_{j+1}\equiv G_{j+1}^{(j+1)}$ 
and $\widetilde{H}_{j+1}\equiv H_{j+1}^{(j+1)}$, 
we need the recursion formulae for  $L_j^G$ and $L_j^H$, which we deduce below.   

As mentioned before, we aim to compute $\widetilde{G}_{j+1}=G_{j+1}^{(j+1)} $ 
directly from $\widetilde{G}_j=G_{j}^{(j)}$ without performing the intermediate step for $G_{j+1}^{(j)}$ explicitly. 
This follows from the fact that we can compute $L_{j+1}^G$ (or $L_{j+1}^H$)  
from $L_j^G$ (or $L_j^H$) directly.  We  display   
 the relationship  between  $L_j^G$ and $L_{j+1}^G$ ($L_j^H$ and $L_{j+1}^H$)     
in the following lemmas. 

\begin{lemma}\label{lm:L_1}
	Define $K_1^G:=(\Phi_{1,1}^G)^{\T} \Sigma_1^Y \Phi_{1,1}^H$ and $K_1^H:= (K_1^G)^{\T}$, 
	it  holds that 
	\begin{align*}
		L_1^G
		= 2\gamma \Sigma_{1,1}^G K_1^G \Sigma_{1,1}^H \Omega_1^{\T}, 
		\qquad
		L_1^H 
		= 2\gamma \Sigma_{1,1}^H K_1^H \Sigma_{1,1}^G \Omega_1. 
	\end{align*}
\end{lemma}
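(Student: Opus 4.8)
The plan is to insert the singular value decompositions introduced before and in~\eqref{eq:svd1} directly into the defining expression
\[
L_1^{G}=2\gamma (\Theta_{1,1}^G)^{\T} R_1^U P_1^U (I_{2m}+Y_1Y_1^{\T})^{-1}Y_1 (P_1^V)^{\T} (R_1^V)^{\T} \Theta_{1,1}^H \Omega_1^{\T},
\]
and to simplify the inner factor $R_1^U P_1^U (I_{2m}+Y_1Y_1^{\T})^{-1}Y_1 (P_1^V)^{\T} (R_1^V)^{\T}$ so that it splits along the two SVDs in~\eqref{eq:svd1}. First I would use $Y_1=U_1^Y\Sigma_1^Y(V_1^Y)^{\T}$ with $U_1^Y$, $V_1^Y$ orthogonal, which gives $I_{2m}+Y_1Y_1^{\T}=U_1^Y\Upsilon_1^G(U_1^Y)^{\T}$ and hence
\[
(I_{2m}+Y_1Y_1^{\T})^{-1}Y_1=U_1^Y(\Upsilon_1^G)^{-1}\Sigma_1^Y(V_1^Y)^{\T}.
\]
The key algebraic observation is the pull-through identity $(\Upsilon_1^G)^{-1}\Sigma_1^Y=(\Upsilon_1^G)^{-1/2}\Sigma_1^Y(\Upsilon_1^H)^{-1/2}$, valid because $p(\Sigma_1^Y(\Sigma_1^Y)^{\T})\Sigma_1^Y=\Sigma_1^Y\,p((\Sigma_1^Y)^{\T}\Sigma_1^Y)$ for any function $p$ and $\Upsilon_1^G$, $\Upsilon_1^H$ are positive definite. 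Splitting this way lets me match the left group against the first SVD in~\eqref{eq:svd1}, namely $R_1^U P_1^U U_1^Y(\Upsilon_1^G)^{-1/2}=\Theta_1^G\Sigma_1^G(\Phi_1^G)^{\T}$, and the right group against the transpose of the second, namely $(\Upsilon_1^H)^{-1/2}(V_1^Y)^{\T}(P_1^V)^{\T}(R_1^V)^{\T}=\Phi_1^H\Sigma_1^H(\Theta_1^H)^{\T}$ (using that $\Sigma_1^H$ and $(\Upsilon_1^H)^{-1/2}$ are symmetric). This yields
\[
R_1^U P_1^U (I_{2m}+Y_1Y_1^{\T})^{-1}Y_1 (P_1^V)^{\T} (R_1^V)^{\T}=\Theta_1^G\Sigma_1^G(\Phi_1^G)^{\T}\Sigma_1^Y\Phi_1^H\Sigma_1^H(\Theta_1^H)^{\T}.
\]

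Next I would pre- and post-multiply by $(\Theta_{1,1}^G)^{\T}$ and $\Theta_{1,1}^H$. Since $\Theta_1^G=[\Theta_{1,1}^G,\Theta_{2,1}^G]$ is square orthogonal, $(\Theta_{1,1}^G)^{\T}\Theta_1^G=[I_{r_1^G}\ \ 0]$, and similarly $(\Theta_1^H)^{\T}\Theta_{1,1}^H=[I_{r_1^H}\ \ 0]^{\T}$. Feeding these through the block-diagonal forms $\Sigma_1^G=\Sigma_{1,1}^G\oplus\Sigma_{2,1}^G$, $\Sigma_1^H=\Sigma_{1,1}^H\oplus\Sigma_{2,1}^H$ and the column partitions of $\Phi_1^G$, $\Phi_1^H$, every trailing block is annihilated and what survives is $\Sigma_{1,1}^G(\Phi_{1,1}^G)^{\T}\Sigma_1^Y\Phi_{1,1}^H\Sigma_{1,1}^H$, which equals $\Sigma_{1,1}^G K_1^G\Sigma_{1,1}^H$ by the definition $K_1^G=(\Phi_{1,1}^G)^{\T}\Sigma_1^Y\Phi_{1,1}^H$. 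Reinstating the factor $2\gamma$ and the untouched $\Omega_1^{\T}$ then gives $L_1^G=2\gamma\Sigma_{1,1}^G K_1^G\Sigma_{1,1}^H\Omega_1^{\T}$.

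The computation for $L_1^H$ is entirely parallel: replacing $Y_1$ by $Y_1^{\T}$ and interchanging the $G$- and $H$-labelled quantities, the same chain of substitutions produces $L_1^H=2\gamma\Sigma_{1,1}^H(\Phi_{1,1}^H)^{\T}(\Sigma_1^Y)^{\T}\Phi_{1,1}^G\Sigma_{1,1}^G\Omega_1$, and since $(\Phi_{1,1}^H)^{\T}(\Sigma_1^Y)^{\T}\Phi_{1,1}^G=\big((\Phi_{1,1}^G)^{\T}\Sigma_1^Y\Phi_{1,1}^H\big)^{\T}=(K_1^G)^{\T}=K_1^H$, this is exactly the stated formula. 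The only genuine difficulty is bookkeeping — tracking which of the numerous factors are orthogonal or permutation matrices and hence disappear, and recognising the right place to insert the $\Upsilon_1^G$/$\Upsilon_1^H$ split so that the expression aligns with~\eqref{eq:svd1}; once that normalisation is fixed, no individual step is hard.
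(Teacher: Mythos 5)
Your proof is correct and takes essentially the same route as the paper: substitute the SVD of $Y_1$ and the two SVDs in~\eqref{eq:svd1} into the defining expression for $L_1^G$, use the pull-through identity $(\Upsilon_1^G)^{-1}\Sigma_1^Y=\Sigma_1^Y(\Upsilon_1^H)^{-1}$ (equivalently the split $(\Upsilon_1^G)^{-1/2}\Sigma_1^Y(\Upsilon_1^H)^{-1/2}$) to align the inverse with the two normalised SVDs, then project onto the leading blocks via $(\Theta_{1,1}^G)^{\T}\Theta_1^G=[I_{r_1^G}\ 0]$ and $(\Theta_1^H)^{\T}\Theta_{1,1}^H=[I_{r_1^H}\ 0]^{\T}$. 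You make explicit the pull-through step that the paper's chain of equalities leaves implicit, but the computation is otherwise the one in Appendix~\ref{sec:proof-of-lemma-lm} (more precisely, the displayed identity~\eqref{eq:L-2-12} and the two lines following it).
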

\begin{proof}
	Since $(\Sigma_1^H)^{\T} = \Sigma_1^H$ and  
	\begin{align}
		&
		M_0^G
		\left[I_{2m} + Y_1^{(1)} (Y_1^{(1)})^{\T}\right]^{-1} 
		Y_1^{(1)} (M_0^H)^{\T}  \nonumber
		\\
		=&
		(\Theta_{1,1}^G)^{\T} R_1^U P_1^U 
		\left[I_{2m} + Y_1^{(1)} (Y_1^{(1)})^{\T}\right]^{-1} 
		Y_1^{(1)} (P_1^V)^{\T} (R_1^V)^{\T} \Theta_{1,1}^H \nonumber
		\\
		=&
		(\Theta_{1,1}^G)^{\T} R_1^U P_1^U U_1^Y 
		\left[I_{2m}+\Sigma_1^Y(\Sigma_1^Y)^{\T}\right]^{-1}
		\Sigma_1^Y (V_1^Y)^{\T}
		(P_1^V)^{\T}	(R_1^V)^{\T} \Theta_{1,1}^H \nonumber
		\\
		=&	
		(\Theta_{1,1}^G)^{\T} \Theta_1^G \Sigma_1^G (\Phi_1^G)^{\T} 
		\Sigma_1^Y \Phi_1^H (\Sigma_1^H)^{\T} (\Theta_1^H)^{\T} \Theta_{1,1}^H  
		\equiv 
		\Sigma_{1,1}^G K_1^G \Sigma_{1,1}^H, \label{eq:L-2-12}
	\end{align}
	we  have  
	\begin{align*}
		L_1^G
		=
		2\gamma (\Theta_{1,1}^G)^{\T} R_1^U P_1^U 
		\left[I_{2m}+ Y_1Y_1^{\T}\right]^{-1} 
		Y_1 
		(P_1^V)^{\T} (R_1^V)^{\T} \Theta_{1,1}^H \Omega_1^{\T} 
		&\equiv	 2\gamma \Sigma_{1,1}^G K_1^G \Sigma_{1,1}^H \Omega_1^{\T}.
	\end{align*}
	Similarly, we have the result for $L_1^H$. 
\end{proof}
\begin{lemma}\label{lm:L_j}
	It holds that 
	\begin{equation}\label{eq:Lj_GH}
		\begin{aligned}
			L_j^G = 2\gamma  \Sigma_{1,j}^G  K_j^G 
			\Sigma_{1,j}^H \Omega_j^{\T}, 
			\qquad
			L_j^H = 2\gamma \Sigma_{1,j}^H  K_j^H 
			\Sigma_{1,j}^G \Omega_j, 
		\end{aligned}
	\end{equation}
	where $\Omega_j = (\mathcal{Q}_j^U)^{\T} \mathcal{Q}_j^V$, $K_j^H \equiv (K_j^G)^{\T}$ 
	with $\Upsilon_j^G := I_{r_{j-1}^G}+4\gamma^2 \Sigma_j^Y(\Sigma_j^Y)^{\T}$, 
    $\Upsilon_j^H := I_{r_{j-1}^H} + 4\gamma^2 (\Sigma_j^Y)^{\T}\Sigma_j^Y$ and 
	\begin{align}
		K_j^G 
		&= 
		(\Phi_{1,j}^G)^{\T}
		\begin{bmatrix}
			2\gamma K_{j-1}^G \Sigma_{1,j-1}^H \Omega_{j-1}^{\T} \Sigma_{1,j-1}^G K_{j-1}^G 
			& K_{j-1}^G V_j^Y (\Upsilon_j^H)^{1/2}
			\\
			(\Upsilon_j^G)^{1/2}
			(U_j^Y)^{\T} K_{j-1}^G 
			& 2\gamma \Sigma_j^Y
		\end{bmatrix} 
		\Phi_{1,j}^H \nonumber
		\\
		&\equiv
		\begin{multlined}[t]
			(\Phi_{1,j}^G)^{\T} 
		(K_{j-1}^G \oplus I_{r_{j-1}^G} )
		\begin{bmatrix}
			2\gamma V_j^Y (\Sigma_j^Y)^{\T}(U_j^Y)^{\T}   
			& V_{j}^Y  (\Upsilon_j^H)^{1/2}
			\\
			(\Upsilon_j^G)^{1/2}
			(U_j^Y)^{\T} & 2\gamma \Sigma_j^Y
		\end{bmatrix} 
		\\
		\cdot
		( 
		K_{j-1}^G \oplus I_{r_{j-1}^H}
		)
		\Phi_{1,j}^H. 
		\end{multlined}\label{eq:Kj_G} 
	\end{align}
\end{lemma}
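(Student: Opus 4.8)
We argue by induction on $j$, the case $j=1$ being Lemma~\ref{lm:L_1} with the convention that the ``previous'' datum is $K_1^G=(\Phi_{1,1}^G)^{\T}\Sigma_1^Y\Phi_{1,1}^H$. The key observation is that, by the very definition \eqref{eq:L-j-UV} of $L_j^G$ (and its $j=1$ specialization in Section~\ref{sssec:truncating-g_2-and-h_2-}), formula \eqref{eq:Lj_GH} is equivalent to the $\Omega$-free identity
\[
	\mathcal{N}_j^G\left[I_{2^jm}+Y_j^{(j)}(Y_j^{(j)})^{\T}\right]^{-1}Y_j^{(j)}(\mathcal{N}_j^H)^{\T}=\Sigma_{1,j}^G K_j^G\Sigma_{1,j}^H,
\]
where $\mathcal{N}_1^G:=M_0^G$ and $\mathcal{N}_j^G:=(\Theta_{1,j}^G)^{\T}M_{j-1}^G$ for $j\geq 2$ (and similarly for $H$), since $(\mathcal{N}_j^H)^{\T}=(M_{j-1}^H)^{\T}\Theta_{1,j}^H$. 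For $j=1$ this identity is exactly the computation \eqref{eq:L-2-12}; so it suffices to derive it at level $j\geq 2$ from its validity at level $j-1$, after which \eqref{eq:Lj_GH} follows by right-multiplication by $2\gamma\Omega_j^{\T}$.

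For the inductive step I would insert the block factorization $M_{j-1}^G=\begin{bmatrix}I_{r_{j-1}^G}&R_{12}^{j-1,U}\\0&R_2^{j-1,U}\end{bmatrix}\bigl(I_2\otimes\mathcal{N}_{j-1}^G\bigr)$ coming from \eqref{eq:M_j_GH} (the base $j=2$ using the explicit form of $M_1^G$ from Section~\ref{sssec:truncating-g_2-and-h_2-}), together with the $2\times2$-block form $Y_j^{(j)}\equiv Y_j^{(j-1)}=\begin{bmatrix}0&Y_{j-1}^{(j-1)}\\Y_{j-1}^{(j-1)}&2\gamma T_{j-1}^{(j-1)}\end{bmatrix}$, and then treat the inner kernel $\bigl[I+Y_j^{(j)}(Y_j^{(j)})^{\T}\bigr]^{-1}Y_j^{(j)}$ exactly as in the derivation of \eqref{eq:inverse_YZ_j}--\eqref{eq:inverse_ZY_j}: a block UL/LU factorization together with the Sherman--Morrison--Woodbury formula presents $\bigl(I_2\otimes\mathcal{N}_{j-1}^G\bigr)\bigl[I+Y_j^{(j)}(Y_j^{(j)})^{\T}\bigr]^{-1}Y_j^{(j)}\bigl(I_2\otimes\mathcal{N}_{j-1}^H\bigr)^{\T}$ in the form $\begin{bmatrix}I&-L_{j-1}^G\\0&I\end{bmatrix}\mathcal{D}\begin{bmatrix}I&-L_{j-1}^H\\0&I\end{bmatrix}^{\T}$ for an explicit block-diagonal $\mathcal{D}$. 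By the inductive hypothesis the $(1,1)$-block of $\mathcal{D}$ collapses to $\Sigma_{1,j-1}^G K_{j-1}^G\Sigma_{1,j-1}^H$, while the remaining three blocks are identified through the SVD $\Sigma_{1,j-1}^G\Omega_{j-1}\Sigma_{1,j-1}^H=U_j^Y\Sigma_j^Y(V_j^Y)^{\T}$ and the definitions of $\Upsilon_j^G,\Upsilon_j^H$, producing the $2\gamma\Sigma_j^Y$- and $(\Upsilon_j^{\bullet})^{1/2}(\cdot)^Y$-type entries of the middle matrix in \eqref{eq:Kj_G}.

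Next I would absorb the two triangular factors and the QR factor $\begin{bmatrix}I&R_{12}^{j-1,U}\\0&R_2^{j-1,U}\end{bmatrix}$ into the SVD \eqref{eq:svdH_j+1} taken with $j\mapsto j-1$. Left-multiplying that SVD by $(\Theta_{1,j}^G)^{\T}$ and using $(\Theta_{1,j}^G)^{\T}\Theta_j^G=[\,I\ 0\,]$ (which discards the truncated block $\Sigma_{2,j}^G$) yields $(\Theta_{1,j}^G)^{\T}\begin{bmatrix}I&R_{12}^{j-1,U}\\0&R_2^{j-1,U}\end{bmatrix}\begin{bmatrix}I&-L_{j-1}^G\\0&I\end{bmatrix}\bigl\{\Sigma_{1,j-1}^G\oplus[\Sigma_{1,j-1}^GU_j^Y(\Upsilon_j^G)^{-1/2}]\bigr\}=\Sigma_{1,j}^G(\Phi_{1,j}^G)^{\T}$, and the transposed $H$-counterpart on the right. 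Substituting these, a factor $\Sigma_{1,j}^G$ (respectively $\Sigma_{1,j}^H$) splits off on each side, and what remains sandwiched between $(\Phi_{1,j}^G)^{\T}$ and $\Phi_{1,j}^H$ is precisely the first block expression for $K_j^G$ in \eqref{eq:Kj_G}; the second, factored expression follows by substituting the transpose $\Sigma_{1,j-1}^H\Omega_{j-1}^{\T}\Sigma_{1,j-1}^G=V_j^Y(\Sigma_j^Y)^{\T}(U_j^Y)^{\T}$ and pulling $K_{j-1}^G\oplus I$ out on both sides. The statements for $L_j^H$ and $K_j^H\equiv(K_j^G)^{\T}$ then follow from the $G\leftrightarrow H$, $U_j^Y\leftrightarrow V_j^Y$ symmetric form of the argument (equivalently, by transposition).

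The main obstacle is the bookkeeping in the two middle steps: the matrices $M_{j-1}^{\bullet}$, $Y_j^{(j)}$ and the SVD factors $\Theta_j^{\bullet},\Sigma_j^{\bullet},\Phi_j^{\bullet}$ carry three distinct but interlocking $2\times2$ block partitions, and one must track how the off-diagonal coupling term --- which turns out to be exactly $L_{j-1}^G$ --- is produced by the Woodbury step and then re-absorbed by the SVD \eqref{eq:svdH_j+1}, all without ever dividing by $\Omega_{j-1}$, which is in general rectangular and not invertible. This is precisely why the induction must be run on the $\Omega$-free identity above rather than directly on $L_j^G$.
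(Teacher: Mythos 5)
Your plan is essentially the paper's argument, made explicit as an induction. The paper (Appendix~\ref{sec:proof-of-lemma-lm}) carries out only the step $j=1\to j=2$ in full and then declares the higher steps ``similar''; what you add is a clean statement of the inductive invariant, namely the $\Omega$-free identity
\[
(\Theta_{1,j}^G)^{\T}M_{j-1}^G\,\bigl[I+Y_j^{(j)}(Y_j^{(j)})^{\T}\bigr]^{-1}Y_j^{(j)}\,(M_{j-1}^H)^{\T}\Theta_{1,j}^H=\Sigma_{1,j}^G K_j^G\Sigma_{1,j}^H,
\]
of which the paper's \eqref{eq:L-2-12} is the $j=1$ case. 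You are right that this, and not $L_j^G$ itself, must be the quantity carried forward, since $\Omega_{j-1}$ is in general rectangular and cannot be stripped off; the paper uses \eqref{eq:L-2-12} in exactly this role but never names the invariant. The QR/SMWF/SVD-absorption machinery you cite, and the use of $(\Theta_{1,j}^G)^{\T}\Theta_j^G=[\,I\;0\,]$ to discard the truncated block, match the paper step for step.

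One intermediate claim, however, is not right as written: you assert that the sandwiched kernel
$\bigl(I_2\otimes\mathcal{N}_{j-1}^G\bigr)\bigl[I+Y_j^{(j)}(Y_j^{(j)})^{\T}\bigr]^{-1}Y_j^{(j)}\bigl(I_2\otimes\mathcal{N}_{j-1}^H\bigr)^{\T}$
factors as triangular $\times$ \emph{block-diagonal} $\mathcal{D}$ $\times$ triangular. That factorization holds for the inverse alone (this is \eqref{eq:inverse_YZ_j}), but as soon as the trailing $Y_j^{(j)}$ is included the middle matrix is genuinely full: after pulling out the left $L_{j-1}^G$-triangular factor, what remains in the paper's computation is the matrix
$\begin{bmatrix}0&\Sigma_{1,j-1}^GK_{j-1}^G\Sigma_{1,j-1}^H\\ Z_{21}&2\gamma Z_{22}\end{bmatrix}$,
which carries both an anti-diagonal $Y_{j-1}$-type coupling and a nontrivial $(2,2)$-block. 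The $L_{j-1}^H$ factor on the right is then produced only after inserting $\bigl[\begin{smallmatrix}I&L_{j-1}^H\\0&I\end{smallmatrix}\bigr]^{\T}\bigl[\begin{smallmatrix}I&-L_{j-1}^H\\0&I\end{smallmatrix}\bigr]^{\T}=I$ by hand, and even then no block-diagonal factor emerges; indeed, the target $K_j^G$ in \eqref{eq:Kj_G} has all four blocks nonzero, which is incompatible with a block-diagonal $\mathcal{D}$. This is a slip in the description of the middle step rather than a structural gap: once you replace ``block-diagonal $\mathcal{D}$'' by the correct full $2\times2$ block form, the SVD absorption and the extraction of $\Sigma_{1,j}^G$, $\Sigma_{1,j}^H$ go through exactly as you outline and deliver \eqref{eq:Kj_G}.
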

\begin{proof}
The tedious proof can be found in Appendix~\ref{sec:proof-of-lemma-lm}.
\end{proof}

Note that $L_j^G$ and $L_j^H$ are required when we truncate $G_{j+1}^{(j)}$ 
and $H_{j+1}^{(j)}$ respectively to $\widetilde{G}_{j+1}\equiv G_{j+1}^{(j+1)}$ and 
$\widetilde{H}_{j+1}\equiv H_{j+1}^{(j+1)}$, 
and all integrants for $K_{j}^G$ (also $L_{j}^G$ and $L_j^H$) are known from the 
previous step when computing $\widetilde{G}_j= G_j^{(j)}$ and $\widetilde{H}_j=H_j^{(j)}$.    

\begin{remark}\label{rk:compute_svd_G}
	Based on \eqref{eq:Lj_GH}  for $L_j^G$ and $L_j^H$, we can write the 
	SVDs in 
	\eqref{eq:svdH_j+1}  as below:  
		\begin{align}
			&\begin{bmatrix}
				I_{r_j^G}&R_{12}^{j,U}\\0&R_2^{j,U}
			\end{bmatrix}
			\begin{bmatrix}
				I_{r_j^G} & -L_j^G\\ 0 & I_{r_j^G}
			\end{bmatrix}
			\left\{ 
				\Sigma_{1,j}^G \oplus 
				\left[\Sigma_{1,j}^G U_{j+1}^Y 
				(\Upsilon_{j+1}^G)^{-1/2}\right]
			\right\} \nonumber 
			\\
			=&
			\begin{bmatrix}
				I_{r_j^G}&R_{12}^{j,U}\\0&R_2^{j,U}
			\end{bmatrix}
			\begin{bmatrix}
				\Sigma_{1,j}^G & 
				-2\gamma \Sigma_{1,j}^G K_j^G V_{j+1}^Y (\Sigma_{j+1}^Y)^{\T}  
				(\Upsilon_{j+1}^G)^{-1/2}
				\\
				0&  \Sigma_{1,j}^G U_{j+1}^Y 
				(\Upsilon_{j+1}^G)^{-1/2}
			\end{bmatrix} \nonumber
			\\
			=&
			\Theta_{j+1}^G \Sigma_{j+1}^G (\Phi_{j+1}^G)^{\T}, \label{eq:svdG_j+1new}
			\\
			&\begin{bmatrix}
				I_{r_j^H}&R_{12}^{j,V}\\0&R_2^{j,V}
			\end{bmatrix}
			\begin{bmatrix}
				I_{r_j^H} & -L_j^H\\ 0 & I_{r_j^H}
			\end{bmatrix}
			\left\{\Sigma_{1,j}^H \oplus 
				\left[\Sigma_{1,j}^H V_{j+1}^Y 
				(\Upsilon_{j+1}^H)^{-1/2}\right]
			\right\} \nonumber 
			\\
			=&
			\begin{bmatrix}
				I_{r_j^H}&R_{12}^{j,V}\\0&R_2^{j,V}
			\end{bmatrix}
			\begin{bmatrix}
				\Sigma_{1,j}^H & -2\gamma \Sigma_{1,j}^H K_j^H U_{j+1}^Y \Sigma_{j+1}^Y 
				(\Upsilon_{j+1}^H)^{-1/2}
				\\
				0 & \Sigma_{1,j}^H V_{j+1}^Y 
				(\Upsilon_{j+1}^H)^{-1/2}
			\end{bmatrix}\nonumber
			\\
			=&
			\Theta_{j+1}^H \Sigma_{j+1}^H (\Phi_{j+1}^H)^{\T}.  \label{eq:svdH_j+1new} 
		\end{align}
\end{remark}

To clarify how  we   skip the doubling step and compute 
$\widetilde{G}_{j+1} \equiv G_{j+1}^{(j+1)}$ directly from 
$\widetilde{G}_j \equiv G_j^{(j)}$ by $K_j^G$ and $K_j^H$ 
(or analogously $L_j^G$ and $L_j^H$), we illustrate with the calculation 
of $\widetilde{G}_{3} \equiv G_{3}^{(3)}$ and $\widetilde{H}_{3} \equiv H_{3}^{(3)}$. 
For this we have $U_2^Y, \Sigma_2^Y, V_2^{Y}$ from   
$\Sigma_{1,1}^G \Omega_1 \Sigma_{1,1}^H = U_2^Y \Sigma_2^Y (V_2^Y)^{\T}$, 
and  $K_1^G, K_1^H =(K_1^G)^{\T}$ when computing 
$\widetilde{G}_{2} \equiv G_{2}^{(2)} = 2\gamma \mathcal{Q}_2^U (\Sigma_{1,2}^G)^2 	(\mathcal{Q}_2^U)^{\T}$ 
and $\widetilde{H}_{2} \equiv H_{2}^{(2)}=2\gamma  \mathcal{Q}_2^V (\Sigma_{1,2}^H)^2 (\mathcal{Q}_2^V)^{\T}$.   

By   the modified Gram-Schmidt process, we produce
\begin{small}
\begin{align*}
	\left[ \mathcal{Q}_2^U, \widetilde A_{\gamma}^{2^2} \mathcal{Q}_2^U \right]
	= 		\left[ \mathcal{Q}_2^U, Q_{3}^U \right]
	\begin{bmatrix}
		I_{r_2^G} & R_{12}^{2,U} \\ 0 & R_2^{2,U}
	\end{bmatrix}, \ \ \ 
	\left[ \mathcal{Q}_2^V, (\widetilde A_{\gamma}^{\T})^{2^2} \mathcal{Q}_2^V \right] 
	=	\left[ \mathcal{Q}_2^V, Q_{3}^V \right]
	\begin{bmatrix}
		I_{r_2^H} & R_{12}^{2,V} \\ 0 & R_2^{2,V}
	\end{bmatrix}, 
\end{align*}
\end{small}
and compute the SVD of   
$\Sigma_{1,2}^G \Omega_2 \Sigma_{1,2}^H = U_3^Y \Sigma_3^Y (V_3^Y)^{\T}$ with 
$\Omega_2=(\mathcal{Q}_2^U)^{\T} \mathcal{Q}_2^V$.  
By \eqref{eq:Kj_G}, we construct
\begin{align*}
	K_2^G 
	&= 
	(\Phi_{1,2}^G)^{\T} 
	(K_{1}^G \oplus I_{r_{1}^G} )
	\begin{bmatrix}
		2\gamma V_2^Y (\Sigma_2^Y)^{\T}(U_2^Y)^{\T}& V_{2}^Y  (\Upsilon_2^H)^{1/2}
		\\
		(\Upsilon_2^G)^{1/2}
		(U_2^Y)^{\T} & 2\gamma \Sigma_2^Y
	\end{bmatrix} 
	( 
		K_{1}^G \oplus I_{r_{1}^H}
	)
	\Phi_{1,2}^H
\end{align*}
with $\Upsilon_2^G:= I_{r_1^G}+4\gamma^2 \Sigma_2^Y(\Sigma_2^Y)^{\T}$ and  
$\Upsilon_2^H:= I_{r_1^H}+4\gamma^2 (\Sigma_2^Y)^{\T}\Sigma_2^Y$.  
Then by computing the SVDs as in  \eqref{eq:svdG_j+1new} and \eqref{eq:svdH_j+1new}, 
with $j=2$, we  obtain $\widetilde{G}_3$ and $\widetilde{H}_3$ by truncation: 
\begin{align*}
	\widetilde{G}_3
	&=
	\left[ \mathcal{Q}_2^U, Q_{3}^U \right]
	\Theta_{1,3}^G (\Sigma_{1,3}^G)^{\T} (\Theta_{1,3}^G)^{\T} 
	\left[ \mathcal{Q}_2^U, Q_{3}^U \right]^{\T},
	\\
	\widetilde{H}_3
	&=
	\left[ \mathcal{Q}_2^V, Q_{3}^V \right]
	\Theta_{1,3}^H (\Sigma_{1,3}^H)^{\T} (\Theta_{1,3}^H)^{\T} 
	\left[ \mathcal{Q}_2^V, Q_{3}^V \right]^{\T},
\end{align*}
where  $\Sigma_{3}^G =\Sigma_{1,3}^G \oplus \Sigma_{2,3}^G$,  
$\Sigma_{3}^H =\Sigma_{1,3}^H \oplus \Sigma_{2,3}^H$ with 
$\|\Sigma_{2,3}^G\|\leq \varepsilon_{3} \|\Sigma_{1,3}^G\|$, 
$\|\Sigma_{2,3}^H\|\leq \varepsilon_{3} \|\Sigma_{1,3}^H\|$, and 
$\Theta_{3}^G=[\Theta_{1,3}^G, \Theta_{2,3}^G ]$,  
$\Theta_{3}^H=[\Theta_{1,3}^H, \Theta_{2,3}^H ]$,   
$\Phi_{3}^G=[\Phi_{1,3}^G, \Phi_{2,3}^G ]$,   
$\Phi_{3}^H=[\Phi_{1,3}^H, \Phi_{2,3}^H ]$.     

Clearly, to get $\widetilde{G}_3\equiv G_3^{(3)}$ and $\widetilde{H}_3\equiv H_3^{(3)}$,   
we require $\mathcal{Q}_2^U$, $\mathcal{Q}_2^V$, $\Sigma_{1,2}^G$, $\Sigma_{1,2}^H$, 
$K_1^G$, $K_1^H=(K_1^G)^{\T}$, $\Phi_{1,2}^G$, $\Phi_{1,2}^H, U_2^Y$, 
$\Sigma_2^Y, V_2^Y$ from the previous step, followed by the truncation. 
A similar procedure can be carried out for the general case, for  
$\widetilde{G}_j\equiv G_j^{(j)}$ and $\widetilde{H}_j\equiv H_j^{(j)}$  ($j\ge 3$). 

\subsection{Algorithm dSDA\texorpdfstring{$\rm{_t}$}{t}}\label{ssec:algorithm-dsda--t-}

In this section, we list the computational steps  
for the dSDA$\rm{_t}$. 

\begin{enumerate}
	\item Initial ($j=1$): given $A, \gamma, B, C$,  \texttt{compute} $U_0$, $U_1$, $V_0$, $V_1$, $Y_0$ and $T_0$. 
	\item \texttt{Compute} $\widetilde{G}_1$ and $\widetilde{H}_1$ with  
		$U_0$, $U_1$, $V_0$, $V_1$, $Y_0$ and $T_0$. 
		\begin{enumerate}
			\item \texttt{Compute} the QR factorizations with  column pivoting  of 
				$[U_0, U_1]$ and $[	V_0, V_1]$:
				\begin{align*}
						[U_0, U_1]
						= Q_1^U R_1^U P_1^U, 
						\qquad 
						[V_0, V_1
						] = Q_1^V R_1^V P_1^V. 
				\end{align*}   
			\item \texttt{Compute} the SVD of $Y_1 = \begin{bmatrix}
					0 & Y_0 \\ Y_0 & 2\gamma T_0
				\end{bmatrix}$:
				\begin{align*}
					Y_1=U_1^Y\Sigma_1^Y (V_1^Y)^{\T},
					\ \ \ 
					U_1^Y\in \mathbb{R}^{2m \times 2m},
					\ \ 
					\Sigma_1^Y \in \mathbb{R}^{2m \times 2l},
					\ \ 
					V_1^Y\in \mathbb{R}^{2l \times 2l}.
				\end{align*}
			\item \texttt{Compute} the SVDs of $R_1^U P_1^U U_1^Y
				(\Upsilon_1^G)^{-1/2}$,  $R_1^V P_1^V  V_1^Y 
				(\Upsilon_1^H)^{-1/2}$ 
				by~\eqref{eq:svd1} .
			\item \texttt{Compute} the truncated $\widetilde G_1$ and $\widetilde H_1$ 
				by \eqref{eq:widetildeG1-and-widetilde-H1}. 
			\item \texttt{Save} $\widetilde{G}_1$, $\widetilde{H}_1$,  
				$Q_1^U\Theta_{1,1}^G$, $Q_1^V\Theta_{1,1}^H$, $\Sigma_{1,1}^G$, $\Sigma_{1,1}^H$, 
				$\Phi_{1,1}^G$, $\Phi_{1,1}^H$ and $\Sigma_1^Y$.
		\end{enumerate}
	\item \texttt{Compute} $\widetilde{G}_2$ and $\widetilde{H}_2$ with inputs $A, \gamma$,  
		$Q_1^U\Theta_{1,1}^G, Q_1^V\Theta_{1,1}^H$, $\Sigma_{1,1}^G, \Sigma_{1,1}^H$, 
		$\Phi_{1,1}^G, \Phi_{1,1}^H$, $\Sigma_1^Y$.  
		\begin{enumerate}
			\item \texttt{Compute} the QR factorizations of 
				\[
						\left[
						Q_1^U \Theta_{1,1}^G, \, \widetilde A_{\gamma}^2 Q_1^U \Theta_{1,1}^G\right] 
						\quad \text{and} \quad   
						\left[Q_1^V \Theta_{1,1}^H, \, (\widetilde A_{\gamma}^{\T})^2Q_1^V \Theta_{1,1}^H\right]
				\]
				by the modified Gram-Schmidt process, as in \eqref{eq:qr2}.
			\item \texttt{Compute} the SVD of $\Sigma_{1,1}^G \Omega_1 \Sigma_{1,1}^H$ with 
				$\Omega_1=(Q_1^U \Theta_{1,1}^G)^{\T} Q_1^V \Theta_{1,1}^H$: 
				\begin{small}
					\begin{align*}
					\Sigma_{1,1}^G \Omega_1 \Sigma_{1,1}^H = U_2^Y \Sigma_2^Y (V_2^Y)^{\T}, 
					\ 
					U_2^Y\in \mathbb{R}^{r_1^G \times r_1^G},  
					\Sigma_2^Y\in \mathbb{R}^{r_1^G \times r_1^H},   
					V_2^Y\in \mathbb{R}^{r_1^H \times r_1^H}.
				\end{align*}				
			\end{small}
			\item \texttt{Construct} $K_1^G = (\Phi_{1,1}^G)^{\T} \Sigma_1^Y \Phi_{1,1}^H$.
			\item \texttt{Compute} $L_1^G=2\gamma \Sigma_{1,1}^G K_1^G \Sigma_{1,1}^H \Omega_1^{\T}$ 
				and $L_1^H=2\gamma \Sigma_{1,1}^H (K_1^G)^{\T} \Sigma_{1,1}^G \Omega_1$.  
			\item \texttt{Compute} by \eqref{eq:svd2_H}   the SVDs of 
				\begin{small}
					\begin{align*}
						\begin{bmatrix}
							I_{r_1^G}&R_{12}^U\\0&R_2^U
						\end{bmatrix}
						\begin{bmatrix}
							I_{r_1^G}&-L_1^{G}		\\ 0&I_{r_1^G}   
						\end{bmatrix}
						\left\{ 
							\Sigma_{1,1}^G \oplus  
							\left[\Sigma_{1,1}^G U_2^Y
							\left(I_{r_1^G}+4\gamma^2 \Sigma_2^Y(\Sigma_2^Y)^{\T}\right)^{-1/2} \right]
						\right\}, 
					\end{align*}
				\end{small}
			\begin{small}
				\begin{align*}
					\begin{bmatrix}
						I_{r_1^H}&R_{12}^V\\0&R_2^V
					\end{bmatrix}
					\begin{bmatrix}
						I_{r_1^H}&-L_1^{H}		\\ 0&I_{r_1^H}   
					\end{bmatrix}
					\left\{ 
						\Sigma_{1,1}^H \oplus 
						\left[\Sigma_{1,1}^H V_2^Y 
						\left( I_{r_1^H}+ 4\gamma^2 (\Sigma_2^Y)^{\T} \Sigma_2^Y\right)^{-1/2}\right]
					\right\}.
				\end{align*}
			\end{small}
			\item \texttt{Compute} the truncated  $\widetilde  G_2$ and $\widetilde  H_2$  by \eqref{eq:wtdGH2}. 
			\item \texttt{Save} $\widetilde{G}_2$, $\widetilde{H}_2$,    
				$\mathcal{Q}_2^U$, $\mathcal{Q}_2^V$, $\Sigma_{1,2}^G$, $\Sigma_{1,2}^H$, 
				$K_1^G$,  $\Phi_{1,2}^G$, $\Phi_{1,2}^H, U_2^Y$ and  $\Sigma_2^Y, V_2^Y$; \texttt{set} $j=2$.  
		\end{enumerate}

	\item \texttt{Compute} the truncated  $\widetilde{G}_{j+1}$ and $\widetilde{H}_{j+1}$, with  
		 $A$, $\gamma$,  $\mathcal{Q}_j^U$, $\mathcal{Q}_j^V$, 
		$\Sigma_{1,j}^G$, $\Sigma_{1,j}^H$, $K_{j-1}^G$, 
		$\Phi_{1,j}^G$, $\Phi_{1,j}^H$, $U_j^Y$, $\Sigma_j^Y$ and $V_j^Y$.  
		\begin{enumerate}
			\item By the modified Gram-Schmidt process, \texttt{compute} the QR factorizations of 
				$\left[\mathcal{Q}_{j}^U, \, \widetilde A_{\gamma}^{2^{j}} \mathcal{Q}_{j}^U\right]$
				and $\left[\mathcal{Q}_{j}^V, \, (\widetilde A_{\gamma}^{\T})^{2^{j}} \mathcal{Q}_{j}^V\right]$  by \eqref{eq:qrj_U}.
			\item \texttt{Compute} the SVD of $\Sigma_{1,j}^G \Omega_{j} \Sigma_{1,j}^H$ with 
				$\Omega_j=(\mathcal{Q}_j^U)^{\T} \mathcal{Q}_j^V$:
				\begin{align*}
					&\Sigma_{1,j}^G \Omega_{j} \Sigma_{1,j}^H=
					U_{j+1}^Y \Sigma_{j+1}^Y (V_{j+1}^Y)^{\T},  
					\\
					&U_{j+1}^Y\in \mathbb{R}^{r_{j}^G \times r_{j}^G},
					\quad \Sigma_{j+1}^Y \in \mathbb{R}^{r_{j}^G\times r_{j}^H}, 
					\quad V_{j+1}^Y\in \mathbb{R}^{r_{j}^H \times r_{j}^H}.
				\end{align*}  
			\item With  $\Phi_{1,j}^G, \Phi_{1,j}^H, K_{j-1}^G, U_j^Y, \Sigma_j^Y$ and $V_j^Y$, 
				\texttt{construct} $K_j^G$ by~\eqref{eq:Kj_G}.  
			\item \texttt{Compute} $L_j^G= 2\gamma  \Sigma_{1,j}^G  K_j^G \Sigma_{1,j}^H \Omega_j^{\T}$ 
				and $L_j^H = 2\gamma \Sigma_{1,j}^H  K_j^H \Sigma_{1,j}^G \Omega_j$. 
			\item \texttt{Compute} by \eqref{eq:svdH_j+1}  the SVDs of
				\begin{small}
					\begin{align*}
						&\begin{bmatrix}
							I_{r_j^G}&R_{12}^{j,U}\\0&R_2^{j,U}
						\end{bmatrix}
						\begin{bmatrix}
							I_{r_j^G} & -L_j^G\\ 0 & I_{r_j^G}
						\end{bmatrix}
						\{ 
							\Sigma_{1,j}^G \oplus 
							[\Sigma_{1,j}^G U_{j+1}^Y 
							(I_{r_j^G} + 4\gamma^2 \Sigma_{j+1}^Y(\Sigma_{j+1}^Y)^{\T})^{-1/2}
							]
						\}, 
						\\
						&\begin{bmatrix}
							I_{r_j^H}&R_{12}^{j,V}\\0&R_2^{j,V}
						\end{bmatrix}
						\begin{bmatrix}
							I_{r_j^H} & -L_j^H\\ 0 & I_{r_j^H}
						\end{bmatrix}
						\{ 
							\Sigma_{1,j}^H \oplus  
							[\Sigma_{1,j}^H V_{j+1}^Y 
							(I_{r_j^H} + 4\gamma^2 (\Sigma_{j+1}^Y)^{\T} \Sigma_{j+1}^Y)^{-1/2}
							]
						\}. 
					\end{align*}
				\end{small}
			\item \texttt{Compute} the truncated $\widetilde  G_{j+1}$ and $\widetilde  H_{j+1}$ by \eqref{eq:wtdH_j+1}. 
			\item \texttt{Save} $\widetilde{G}_{j+1}$, $\widetilde{H}_{j+1}$,   
				$\mathcal{Q}_{j+1}^U$, $\mathcal{Q}_{j+1}^V$, $\Sigma_{1,j+1}^G$, 
				$\Sigma_{1,j+1}^H$, 	$K_j^G$, 
				$\Phi_{1,j+1}^G$, $\Phi_{1,j+1}^H$,$ U_{j+1}^Y$, $\Sigma_{j+1}^Y$ and $V_{j+1}^Y$.   
			\item \texttt{Set} $j:=j+1$; \texttt{repeat} Step $4$  until convergence.  
		\end{enumerate}
\end{enumerate}

From the above algorithm, the dominant flop counts occurs in the generation of the bases 
$\mathcal{Q}_j^U$ and $\mathcal{Q}_j^V$ for the associated Krylov subspaces. 
With truncation controlling their ranks and benefiting from the structures of $A$ like sparsity, 
the dominant flop counts will be those for the multiplication or the solution of 
linear systems associated with $A_{\gamma}$ or its transpose.

\section{Error Analysis for dSDA\texorpdfstring{$\rm{_t}$}{t}} \label{sec:forward-error-analysis-for-dsda}

The dSDA$\rm{_t}$ obviously produces  totally different matrix sequences 
$\{G_0, \widetilde{G}_1, \widetilde{G}_2, \widetilde{G}_3, \cdots\}$ and 
$\{H_0, \widetilde{H}_1, \widetilde{H}_2, \widetilde{H}_3, \cdots\}$  
from those by the  dSDA or  the SDA. 
Then the obvious question on the convergence of the dSDA$\rm{_t}$ has to be asked. 
Dose it hold that $\lim_{k \to \infty}\widetilde{G}_k = Y$ and   
$\lim_{k \to \infty}\widetilde{H}_k = X$, 
where $X$ is the solution to \eqref{eq:care} and $Y$ is the solution to the dual problem? 
To answer this  fully, we first show the relationship 
between the CARE~\eqref{eq:care} and some  DAREs. 
Then we    construct some perturbed DAREs  which the truncated  iterates  
$G_j^{(j)}\equiv \widetilde{G}_j$ and $H_j^{(j)}\equiv \widetilde{H}_j$ satisfy. 
We then   analyze   the errors of the symmetric positive semi-definite solutions 
for these perturbed  DAREs.  The detailed  analysis will eventually  
prove the convergence of  the dSDA$\rm{_t}$.  

\begin{lemma}\label{lm:care_to_dare}
	For the CARE problem \eqref{eq:care} and the iterates in \eqref{eq:care-iteration}, 
	it holds that 
	\begin{align*}
		A_k^{\T} X (I+G_k X)^{-1} A_k + H_k = X, \ \ \ 
		A_k Y (I+H_k Y)^{-1} A_k^{\T} + G_k = Y,
	\end{align*}
	where $Y$ is the unique symmetric positive semi-definite solution to the dual problem of \eqref{eq:care}.
\end{lemma}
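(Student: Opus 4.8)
The plan is to prove the two identities simultaneously by induction on $k$, working directly with the three coupled recursions \eqref{eq:sda} rather than with the explicit decoupled forms \eqref{eq:care-iteration}. As a preliminary I would record that $G_k,H_k\ge 0$ and $X,Y\ge 0$, so each of $I+G_kX$, $I+H_kY$, $I+G_kH_k$ (and their transposes, and also $I+G_{k+1}X$, $I+H_{k+1}Y$) has spectrum in $[1,\infty)$ and is therefore nonsingular; this guarantees that every inverse appearing below exists. I would also note that $G_k,H_k$ are symmetric, so $(I+G_kH_k)^{\T}=I+H_kG_k$ and hence $A_{k+1}^{\T}=A_k^{\T}(I+H_kG_k)^{-1}A_k^{\T}$, and that $H_{k+1}=H_k+A_k^{\T}(I+H_kG_k)^{-1}H_kA_k$.

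For the base case $k=0$, the key point is that $A_0,G_0,H_0$ are, by construction, the coefficient matrices of the DARE obtained from the CARE \eqref{eq:care} by the Cayley transform with shift $\gamma$. Since this transform maps the open left half-plane onto the open unit disk, it preserves the stable invariant subspace of $\mathcal{H}$, whence $X$ is the stabilizing solution of that DARE, i.e.\ $A_0^{\T}X(I+G_0X)^{-1}A_0+H_0=X$, and $Y$ is the solution of its dual; this is the standard correspondence underlying the SDA for CAREs (see~\cite{chuFL2005structurepreserving, guoCLL2020decoupled}). Alternatively, one verifies $A_0^{\T}X(I+G_0X)^{-1}A_0+H_0=X$ by direct substitution, rewriting \eqref{eq:care} as $A_\gamma^{\T}X+XA_\gamma+2\gamma X-XGX+H=0$ and inserting the definitions of $A_0,G_0,H_0$ and $K_\gamma=A_\gamma^{\T}+HA_\gamma^{-1}G$.

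For the inductive step, assume $X-H_k=A_k^{\T}X(I+G_kX)^{-1}A_k$ and $Y-G_k=A_kY(I+H_kY)^{-1}A_k^{\T}$. First I would establish the auxiliary doubling identity
\[
(I+G_{k+1}X)^{-1}A_{k+1}=\bigl[(I+G_kX)^{-1}A_k\bigr]^{2},
\]
which, after using $A_{k+1}=A_k(I+G_kH_k)^{-1}A_k$ and $G_{k+1}=G_k+A_k(I+G_kH_k)^{-1}G_kA_k^{\T}$, reduces to the equality $(I+G_{k+1}X)(I+G_kX)^{-1}A_k=A_k(I+G_kH_k)^{-1}(I+G_kX)$; expanding the left-hand side and using the inductive hypothesis $A_k^{\T}X(I+G_kX)^{-1}A_k=X-H_k$ collapses it exactly to the right-hand side. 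Next, combining the formula for $H_{k+1}$ with the algebraic identity $X(I+G_kX)^{-1}-(I+H_kG_k)^{-1}H_k=(I+H_kG_k)^{-1}(X-H_k)(I+G_kX)^{-1}$ gives $X-H_{k+1}=A_k^{\T}(I+H_kG_k)^{-1}(X-H_k)(I+G_kX)^{-1}A_k$; substituting the inductive hypothesis once more for $X-H_k$, then recognising $A_k^{\T}(I+H_kG_k)^{-1}A_k^{\T}=A_{k+1}^{\T}$ and invoking the doubling identity, we obtain $X-H_{k+1}=A_{k+1}^{\T}X(I+G_{k+1}X)^{-1}A_{k+1}$, which is the first identity at level $k+1$. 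The second identity follows by the same computation under the duality $A_k\leftrightarrow A_k^{\T}$, $G_k\leftrightarrow H_k$, $X\leftrightarrow Y$, which leaves \eqref{eq:sda} invariant.

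The main obstacle is the base case: pinning down the Cayley-transform correspondence precisely (including the symmetry and positive semidefiniteness of $G_0,H_0$), or else carrying out the direct substitution cleanly, since the definitions of $A_0,G_0,H_0$ through $K_\gamma$ are somewhat opaque. The inductive step is then routine bookkeeping; the only genuinely delicate manipulation is the auxiliary doubling identity, and once that is in hand the rest is a short chain of substitutions.
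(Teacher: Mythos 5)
Your proof is correct, but it follows a genuinely different route from the paper's. The paper proves the lemma in one stroke by quoting the symplectic-pencil (deflating subspace) form of SDA theory: it records the facts
\[
\begin{bmatrix} A_k & 0 \\ -H_k & I \end{bmatrix}\begin{bmatrix} I \\ X \end{bmatrix} = \begin{bmatrix} I & G_k \\ 0 & A_k^{\T} \end{bmatrix}\begin{bmatrix} I \\ X \end{bmatrix}R_\gamma^{2^k},
\qquad
\begin{bmatrix} A_k & 0 \\ -H_k & I \end{bmatrix}\begin{bmatrix} -Y \\ I \end{bmatrix}S_\gamma^{2^k} = \begin{bmatrix} I & G_k \\ 0 & A_k^{\T} \end{bmatrix}\begin{bmatrix} -Y \\ I \end{bmatrix},
\]
with $R_\gamma=(A-GX-\gamma I)^{-1}(A-GX+\gamma I)$ and $S_\gamma=(A^{\T}-HY-\gamma I)^{-1}(A^{\T}-HY+\gamma I)$, and eliminating $R_\gamma^{2^k}$, $S_\gamma^{2^k}$ from the two block rows of each equation gives exactly the two stated identities for every $k$ at once. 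You instead prove the $k=0$ case from the Cayley correspondence and then run an explicit induction on \eqref{eq:sda}: the auxiliary doubling identity $(I+G_{k+1}X)^{-1}A_{k+1}=\bigl[(I+G_kX)^{-1}A_k\bigr]^2$, the algebraic identity $X(I+G_kX)^{-1}-(I+H_kG_k)^{-1}H_k=(I+H_kG_k)^{-1}(X-H_k)(I+G_kX)^{-1}$, and the inductive hypothesis combine cleanly to give $X-H_{k+1}=A_{k+1}^{\T}X(I+G_{k+1}X)^{-1}A_{k+1}$, and the dual identity follows by symmetry. I verified each step and the computation is sound. Both proofs ultimately lean on SDA theory for the seed fact (the paper for the pencil relations, you for the $k=0$ base case); the paper's version is shorter and emphasizes the invariant-subspace structure underlying SDA, whereas yours is more elementary and in particular makes the doubling relation $(I+G_{k+1}X)^{-1}A_{k+1}=\bigl[(I+G_kX)^{-1}A_k\bigr]^2$ explicit — which is in fact item (a) used later in Section~\ref{sec:forward-error-analysis-for-dsda}, so your argument also exposes a piece of machinery the paper reuses without proof.
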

\begin{proof}
	The results follow from the theory of the SDA 
	\cite{chuFL2005structurepreserving,linX2006convergence}, and the facts that 
	\begin{align*}
		\begin{bmatrix}
			A_k & 0 \\ -H_k & I
		\end{bmatrix}
		\begin{bmatrix}
			I \\ X
		\end{bmatrix} 
		=
		\begin{bmatrix}
			I& G_k \\ 0& A_k^{\T}
		\end{bmatrix}
		\begin{bmatrix}
			I \\ X
		\end{bmatrix} R_{\gamma}^{2^k},
		\qquad
		\begin{bmatrix}
			A_k  & 0 \\ -H_k & I
		\end{bmatrix}
		\begin{bmatrix}
			-Y \\ I
		\end{bmatrix} S_{\gamma}^{2^k}
		=
		\begin{bmatrix}
			I& G_k \\0& A_k^{\T}
		\end{bmatrix}
		\begin{bmatrix}
			-Y \\ I
		\end{bmatrix}, 
	\end{align*}
	where $R_{\gamma}:= (A-GX-\gamma I)^{-1}(A-GX+\gamma I)$ and 
	$S_{\gamma}:=(A^{\T} -HY -\gamma I)^{-1}(A^{\T} -HY + \gamma I)$.
\end{proof}

With $G_{j}^{(j)}$, $H_{j}^{(j)}$ and $A_{j}^{(j)}$ ($j\ge 1$) given 
explicitly in 
\eqref{eq:Hj+1} and \eqref{eq:Aj+1} respectively, 
the doubling iteration~\eqref{eq:care-iteration} produces, 
for $k > j$, $G_k^{(j)}$ and 
$H_k^{(j)}$ in~\eqref{eq:H_k_j} and  
\begin{equation}\label{eq:A-k-j}
	\begin{aligned}
		A_k^{(j)} 
		&= 
		\begin{multlined}[t]
			\widetilde A_{\gamma}^{2^k} - 2\gamma 
		\widehat{\mathcal{X}}_k^{U,(j)} 
		(I_{2^{k-j}}\otimes \widehat{M}_{j-1}^G )
		\left[I_{2^km} +  Y_k^{(j)} (Y_k^{(j)})^{\T} \right]^{-1}
		Y_k^{(j)} 
		\\
		\cdot
		(I_{2^{k-j}} \otimes \widehat{M}_{j-1}^H)^{\T} 
		(\widehat{\mathcal{X}}_k^{V, (j)})^{\T}
		\end{multlined}
		\\
		&\equiv 
		\begin{multlined}[t]
			\widetilde A_{\gamma}^{2^k} - 2\gamma 
			\mathcal{X}_k^{U,(j)} 
			\left[I_{2^{k-j}}\otimes \left((\Theta_{1,j}^G)^{\T} {M}_{j-1}^G\right) \right]
			\left[I_{2^km} +  Y_k^{(j)} (Y_k^{(j)})^{\T} \right]^{-1}
			\\
			\cdot
			Y_k^{(j)} 
			\left[I_{2^{k-j}} \otimes \left( ({M}_{j-1}^H)^{\T} \Theta_{1,j}^H\right) \right] 
			({\mathcal{X}}_k^{V, (j)})^{\T}.
		\end{multlined}	
	\end{aligned}
\end{equation}
Now  consider respectively the DARE and its dual
\begin{equation}\label{eq:dare_j_dual_trun}
	\begin{aligned} 
		(A_j^{(j)})^{\T} X^{(j)} (I+G_j^{(j)}X^{(j)})^{-1} A_j^{(j)} + H_j^{(j)}&= X^{(j)}, 
		\\ 
		A_j^{(j)}  Y^{(j)} (I+H_j^{(j)} Y^{(j)})^{-1} (A_j^{(j)})^{\T} + G_j^{(j)}&=Y^{(j)}. 
	\end{aligned}
\end{equation}
Assuming that the unique symmetric positive semi-definite  
solutions $X^{(j)}$ and $Y^{(j)}$ exist, 
then the matrix sequences $\{A_k^{(j)}\}$, $\{G_k^{(j)}\}$, and $\{H_k^{(j)}\}$ 
satisfy \cite{chuFL2005structurepreserving,linX2006convergence}
\begin{enumerate}
	\item [(a)] $A_k^{(j)} = (I + G_k^{(j)} X^{(j)})
		\left[(I+G_j^{(j)}X^{(j)})^{-1} A_j^{(j)}\right]^{2^{k-j}}$;	
	\item [(b)] $\{H_k^{(j)}\}$ is monotonically increasing with upper bound $X^{(j)}$ and 
		\begin{small}
			\begin{align*}
				&
				X^{(j)} - H_k^{(j)} 
				\\
				=&
				\begin{multlined}[t]
					\left[ (A_j^{(j)})^{\T} (I+ X^{(j)}G_j^{(j)})^{-1} \right]^{2^{k-j}}
					X^{(j)} ( I + G_k^{(j)}  X^{(j)}) 
					\left[(I+G_j^{(j)} X^{(j)})^{-1} A_j^{(j)} \right]^{2^{k-j}} 
				\end{multlined}
				\\
				\leq & 
				\begin{multlined}[t]
					\left[(A_j^{(j)})^{\T} (I+ X^{(j)}G_j^{(j)})^{-1}\right]^{2^{k-j}}
					X^{(j)} ( I +  Y^{(j)}  X^{(j)}) 
					\left[(I+G_j^{(j)} X^{(j)})^{-1} A_j^{(j)} \right]^{2^{k-j}}; 
				\end{multlined}		
			\end{align*}
		\end{small}
	\item [(c)] $\{G_k^{(j)}\}$ is monotonically increasing with upper bound $Y^{(j)}$ and 
		\begin{small}
			\begin{align*}
				&
				Y^{(j)} - G_k^{(j)} 
				\\
				=&
				\begin{multlined}[t]
					\left[ A_j^{(j)}(I+ Y^{(j)}H_j^{(j)})^{-1}\right]^{2^{k-j}}
					Y^{(j)} (I + H_k^{(j)} Y^{(j)})
					\left[(I+H_j^{(j)} Y^{(j)})^{-1}(A_j^{(j)})^{\T} \right]^{2^{k-j}}
				\end{multlined}			
				\\
				\leq & 
				\begin{multlined}[t]
					\left[A_j^{(j)}(I+Y^{(j)}H_j^{(j)})^{-1}\right]^{2^{k-j}}
					Y^{(j)} (I + X^{(j)} Y^{(j)}) 
					\left[(I+H_j^{(j)}Y^{(j)})^{-1}(A_j^{(j)})^{\T} \right]^{2^{k-j}}.
				\end{multlined}	
			\end{align*}
		\end{small}
\end{enumerate}
We thus  deduced  that $A_k^{(j)} \to  0$,   
$G_k^{(j)} \to X^{(j)}$  and $H_k^{(j)} \to Y^{(j)}$  as $k\to \infty$. 

Note that by Lemma~\ref{lm:care_to_dare} and the doubling transformation for $j\ge 0$,  
we have  
\begin{equation}\label{eq:dare_j_dual}
	\begin{aligned}
		(A_{j+1}^{(j)})^{\T}  X^{(j)} 	(I+G_{j+1}^{(j)}  X^{(j)})^{-1}
		A_{j+1}^{(j)} + H_{j+1}^{(j)} &=  X^{(j)}, 
		\\
		A_{j+1}^{(j)}  Y^{(j)} ( I+H_{j+1}^{(j)}Y^{(j)} )^{-1} 
		(A_{j+1}^{(j)})^{\T} + G_{j+1}^{(j)} &=  Y^{(j)}, 
	\end{aligned}
\end{equation}
where  $A_1^{(0)}:= A_1$, $G_1^{(0)}:= G_1$, $H_1^{(0)}:=H_1$,  
$X^{(0)}:= X$  and $Y^{(0)}:= Y$. Now take $j=0, 1, 2, \cdots$ for 
\eqref{eq:dare_j_dual}, and  at the same time set $j=1, 2, 3, \cdots$ 
for \eqref{eq:dare_j_dual_trun}. 
Obviously, the coefficients in  the DAREs in
\eqref{eq:dare_j_dual_trun}  
are respectively  the truncated results from those in the DAREs in 
\eqref{eq:dare_j_dual}. 
This implies that we can work out the difference between $X^{(j)}$ and $X^{(j+1)}$ 
(also $Y^{(j)}$ and  $Y^{(j+1)}$) by perturbation theory in Lemma~\ref{lm:forward_error_dare}. 
We first  need to estimate  the errors in the coefficient matrices; i.e.,  
the differences between $A_{j+1}^{(j)}$ and $A_{j+1}^{(j+1)}$, 
$G_{j+1}^{(j)}$ and $G_{j+1}^{(j+1)}$, $H_{j+1}^{(j+1)}$ and $H_{j+1}^{(j)}$ 
for $j\ge 0$. When these  differences  are sufficiently small,   
we  can then apply   Lemma~\ref{lm:forward_error_dare}   to the DAREs in 
\eqref{eq:dare_j_dual}, subsequently verify   
the existence of the symmetric positive semi-definite solutions 
$X^{(j+1)}$ and $Y^{(j+1)}$ in~\eqref{eq:dare_j_dual_trun}. 
The analysis also yields the errors  
$\|X^{(j)} - X^{(j+1)}\|$ and  $\|Y^{(j)} - Y^{(j+1)}\|$. 

Assume that we have obtained the  differences 
$A_{j+1}^{(j)} - A_{j+1}^{(j+1)}$, $G_{j+1}^{(j)}-G_{j+1}^{(j+1)}$ and 
$H_{j+1}^{(j)}-H_{j+1}^{(j+1)}$. Then by Lemma~\ref{lm:forward_error_dare},  
Remark~\ref{rk:forward-error-dare} and item~(b) above,  we  conclude that 
\begin{align}
	&
	\|X-H_k^{(j)}\| 
	= 
	\left\|X - X^{(1)} + \sum_{s=1}^{j-1} (X^{(s)} - X^{(s+1)}) + X^{(j)} - H_k^{(j)} \right\| 
	\nonumber
	\\
	\leq& 
	\|X-X^{(1)}\| + \sum_{s=1}^{j-1} \|X^{(s)} - X^{(s+1)}\| + \|X^{(j)}-H_k^{(j)}\|
	\nonumber
	\\
	\leq&
	\begin{multlined}[t]
		\frac{1}{\ell^{(0)}}\|H_1^{(1)}-H_1\| + \xi^{(0)}\|A_1^{(1)}-A_1\| 
		+ \eta^{(0)} \|G_1^{(1)}-G_1\| 
		\\
		+ \bigO(\|(H_1^{(1)}-H_1, A_1^{(1)}-A_1,  G_1^{(1)}-G_1)\|^2)
		\\
		+ \sum_{s=1}^{j-1}\Big\{\frac{1}{\ell^{(s)}}\|H_{s+1}^{(s)}-H_{s+1}^{(s+1)}\| + 
			\xi^{(s)}\|A_{s+1}^{(s)}-A_{s+1}^{(s+1)}\| + \eta^{(s)} \|G_{s+1}^{(s)}-G_{s+1}^{(s+1)}\| 
			\\
			+ \bigO(\|(H_{s+1}^{(s)}-H_{s+1}^{(s+1)}, A_{s+1}^{(s)}-A_{s+1}^{(s+1)},  
			G_{s+1}^{(s)}-G_{s+1}^{(s+1)})\|^2)
		\Big\}
		\\
		+\N*{[ (A_{j}^{(j)})^{\T} (I+ X^{(j)}G_{j}^{(j)})^{-1}]^{2^{k-j}}
			X^{(j)} ( I + Y^{(j)}  X^{(j)}) 
		[(I+G_{j}^{(j)} X^{(j)})^{-1} A_{j}^{(j)} ]^{2^{k-j}}}, 
	\end{multlined}	\label{eq:solution_X_diff}
\end{align}
where $\ell^{(s)}$, $\xi^{(s)}$ and $\eta^{(s)}$ (for $s\geq 0$) 
 are defined similarly as $\ell$, $\xi$ and $\eta$  respectively 
in \eqref{eq:lxieta}, but with $A_c$, $A_0$, $G_0$, and $H_0$ being    
replaced by $(I+G_{s+1}^{(s)} X^{(s)})^{-1} A_{s+1}^{(s)}$, 
$A_{s+1}^{(s)}$, $G_{s+1}^{(s)}$ and $H_{s+1}^{(s)}$, respectively. 

The truncation errors satisfy  
$\|G_{s+1}^{(s+1)}-G_{s+1}^{(s)}\|\le \varepsilon_{s+1} \|G_{s+1}^{(s)}\|$ and 
$\|H_{s+1}^{(s+1)}-H_{s+1}^{(s)}\|\le \varepsilon_{s+1} \|H_{s+1}^{(s)}\|$, 
where $\varepsilon_{s+1}$ is some  small tolerance.  Hence,  
for the difference $\N{X-H_k^{(j)}}$, it follows from \eqref{eq:solution_X_diff} that   
we just need to estimate  $\N{A_1^{(1)}-A_1}$ and  $\N{A_{s+1}^{(s+1)} - A_{s+1}^{(s)}}$, 
as   in the following lemma.  

\begin{lemma}\label{lm:doubling_k1}
	With $\kappa_{s} := \max\{1, \N{K_s^G}^2\} 
	\big(2\gamma\N{\Sigma_{s+1}^Y} 
	+\sqrt{1+4\gamma^2\N{\Sigma_{s+1}^Y}^2 }\big)$ for $s\geq 1$, 
	we have 
	\begin{enumerate}
		\item [(\romannumeral1)] $\N{A_1^{(1)}-A_1} \leq 
			4\gamma \varepsilon_1 \N{\Sigma_{1,1}^G} \N{\Sigma_1^Y} \N{\Sigma_{1,1}^H}$; and  
		\item [(\romannumeral2)] $
			\N{A_{s+1}^{(s+1)} - A_{s+1}^{(s)}} 
			\leq 
			4\gamma  \kappa_s \varepsilon_{s+1}\N{\Sigma_{1,s+1}^G} \N{\Sigma_{1,s+1}^H}$. 
	\end{enumerate}
\end{lemma}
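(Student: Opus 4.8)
The plan is to treat each item as a comparison of two $A$-iterates written in a common ``sandwiched'' form, and then to extract the truncation tail by a telescoping identity. In both items the difference of the two iterates turns out to have the shape $-2\gamma\,\mathcal U(M_G\,\mathcal C\,M_H-M_G'\,\mathcal C\,M_H')\mathcal V^{\T}$, where $\mathcal U,\mathcal V$ have orthonormal columns (hence are absorbed in the $2$-norm), $M_G',M_H'$ are the retained (leading) parts of $M_G,M_H$, and $\mathcal C$ is a ``middle'' matrix. The identity $M_G\mathcal C M_H-M_G'\mathcal C M_H'=(M_G-M_G')\mathcal C M_H+M_G'\mathcal C(M_H-M_H')$ bounds this by $(\|M_G-M_G'\|\,\|M_H\|+\|M_G'\|\,\|M_H-M_H'\|)\|\mathcal C\|$; since the tails obey $\|M_G-M_G'\|\le\varepsilon\|M_G'\|$ (the truncation criterion, with $\|M_G'\|$ equal to the norm of the retained singular values) while $\|M_H\|=\|M_H'\|$, and similarly for $H$, this is $\le 2\varepsilon\,\|M_G'\|\,\|M_H'\|\,\|\mathcal C\|$, so both items reduce to identifying the sandwiched form and computing $\|\mathcal C\|$.

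For (i), using $[U_0,U_1]=Q_1^UR_1^UP_1^U$, $[V_0,V_1]=Q_1^VR_1^VP_1^V$, $(I_{2m}+Y_1Y_1^{\T})^{-1}Y_1=U_1^Y(\Upsilon_1^G)^{-1}\Sigma_1^Y(V_1^Y)^{\T}$, and the SVDs \eqref{eq:svd1} exactly as in the computation \eqref{eq:L-2-12}, one gets $R_1^UP_1^U(I_{2m}+Y_1Y_1^{\T})^{-1}Y_1(P_1^V)^{\T}(R_1^V)^{\T}=\Theta_1^G\Sigma_1^G(\Phi_1^G)^{\T}\Sigma_1^Y\Phi_1^H\Sigma_1^H(\Theta_1^H)^{\T}$. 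Substituting this into $A_1$ from \eqref{eq:care-iteration} and into $A_1^{(1)}$ (whose $\mathcal Q_1^U,\mathcal Q_1^V$ insert the projector $\Theta_1^G(I_{r_1^G}\oplus 0)(\Theta_1^G)^{\T}$, which collapses the SVD to its leading block) yields $A_1-A_1^{(1)}=-2\gamma Q_1^U(M_G\Sigma_1^YM_H-M_G'\Sigma_1^YM_H')(Q_1^V)^{\T}$ with $M_G=\Theta_1^G\Sigma_1^G(\Phi_1^G)^{\T}$, $M_G'=\Theta_{1,1}^G\Sigma_{1,1}^G(\Phi_{1,1}^G)^{\T}$, and the analogous $M_H,M_H'$. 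The general bound with $\mathcal C=\Sigma_1^Y$, $\|Q_1^U\|=\|Q_1^V\|=1$, $\|M_G'\|=\|\Sigma_{1,1}^G\|$, $\|M_H'\|=\|\Sigma_{1,1}^H\|$ and $\|\Sigma_{2,1}^G\|\le\varepsilon_1\|\Sigma_{1,1}^G\|$ then gives (i) with constant $2\cdot 2\gamma=4\gamma$.

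For (ii), I would first bring both iterates to matched form. Combining \eqref{eq:A-k-j} at $k=s+1$, \eqref{eq:Aj+1}, \eqref{eq:qrj_U}, and the identities $\widehat M_{s-1}^G=\Theta_{1,s}^G(\Theta_{1,s}^G)^{\T}M_{s-1}^G$, $\mathcal Q_s^U=\widehat{\mathcal Q}_s^U\Theta_{1,s}^G$ (so that $\widehat{\mathcal X}_{s+1}^{U,(s)}(I_2\otimes\widehat M_{s-1}^G)=\mathcal X_{s+1}^{U,(s)}(I_2\otimes(\Theta_{1,s}^G)^{\T}M_{s-1}^G)=\widehat{\mathcal Q}_{s+1}^UM_s^G$, and likewise on the $V$-side), one obtains
\[
A_{s+1}^{(s)}=\widetilde A_\gamma^{2^{s+1}}-2\gamma\widehat{\mathcal Q}_{s+1}^UM_s^G\,\mathcal N\,(M_s^H)^{\T}(\widehat{\mathcal Q}_{s+1}^V)^{\T},\qquad A_{s+1}^{(s+1)}=\widetilde A_\gamma^{2^{s+1}}-2\gamma\widehat{\mathcal Q}_{s+1}^U\widehat M_s^G\,\mathcal N\,(\widehat M_s^H)^{\T}(\widehat{\mathcal Q}_{s+1}^V)^{\T},
\]
with $\mathcal N:=[I+Y_{s+1}^{(s)}(Y_{s+1}^{(s)})^{\T}]^{-1}Y_{s+1}^{(s)}$, $\widehat M_s^G=\Theta_{1,s+1}^G(\Theta_{1,s+1}^G)^{\T}M_s^G$, and $\widehat{\mathcal Q}_{s+1}^U,\widehat{\mathcal Q}_{s+1}^V$ having orthonormal columns. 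The crucial identity---the level-$s$ analogue of \eqref{eq:L-2-12}---is $M_s^G\,\mathcal N\,(M_s^H)^{\T}=\Theta_{s+1}^G\Sigma_{s+1}^G\,\mathcal C_s\,\Sigma_{s+1}^H(\Theta_{s+1}^H)^{\T}$, where
\[
\mathcal C_s=(\Phi_{s+1}^G)^{\T}(K_s^G\oplus I)\begin{bmatrix}2\gamma V_{s+1}^Y(\Sigma_{s+1}^Y)^{\T}(U_{s+1}^Y)^{\T}&V_{s+1}^Y(\Upsilon_{s+1}^H)^{1/2}\\(\Upsilon_{s+1}^G)^{1/2}(U_{s+1}^Y)^{\T}&2\gamma\Sigma_{s+1}^Y\end{bmatrix}(K_s^G\oplus I)\Phi_{s+1}^H,
\]
so that $\widehat M_s^G\mathcal N(\widehat M_s^H)^{\T}=\Theta_{1,s+1}^G\Sigma_{1,s+1}^G\mathcal C_s^{(1)}\Sigma_{1,s+1}^H(\Theta_{1,s+1}^H)^{\T}$ with $\mathcal C_s^{(1)}$ the leading block of $\mathcal C_s$ (indeed $\mathcal C_s^{(1)}\equiv K_{s+1}^G$, cf.\ \eqref{eq:Kj_G}). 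Then $\|\mathcal C_s\|\le\kappa_s$: the two $\Phi$-factors have norm $\le1$, $\|K_s^G\oplus I\|=\max\{1,\|K_s^G\|\}$, and the central $2\times2$ block matrix is dominated in $2$-norm by the scalar matrix obtained by replacing each block by its norm, namely $\begin{bmatrix}2\gamma\|\Sigma_{s+1}^Y\|&\sqrt{1+4\gamma^2\|\Sigma_{s+1}^Y\|^2}\\ \sqrt{1+4\gamma^2\|\Sigma_{s+1}^Y\|^2}&2\gamma\|\Sigma_{s+1}^Y\|\end{bmatrix}$, whose $2$-norm is its larger eigenvalue $2\gamma\|\Sigma_{s+1}^Y\|+\sqrt{1+4\gamma^2\|\Sigma_{s+1}^Y\|^2}$. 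Telescoping as in (i) with $N_G:=\Theta_{s+1}^G\Sigma_{s+1}^G$ and its zero-padded truncation $N_G'$, $\mathcal C=\mathcal C_s$, and multiplying by $2\gamma$, gives (ii).

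I expect the main obstacle to be the crucial identity for $M_s^G\,\mathcal N\,(M_s^H)^{\T}$: it requires unfolding $M_s^G$ from \eqref{eq:M_j_GH} against the SVDs \eqref{eq:svdH_j+1}/\eqref{eq:svdG_j+1new}, pushing $\mathcal N$ through the $2\times2$ block structure of $Y_{s+1}^{(s)}=\begin{bmatrix}0&Y_s^{(s)}\\ Y_s^{(s)}&2\gamma T_s^{(s)}\end{bmatrix}$ by the Sherman--Morrison--Woodbury formula, and recursing through \eqref{eq:Sigma_j_H} and the $L_s^G$, $K_s^G$ recursions---the same machinery as the deferred proof of Lemma~\ref{lm:L_j} in Appendix~\ref{sec:proof-of-lemma-lm}. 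In fact the identity $L_{s+1}^G=2\gamma(\Theta_{1,s+1}^G)^{\T}M_s^G\,\mathcal N\,(M_s^H)^{\T}\Theta_{1,s+1}^H\,\Omega_{s+1}^{\T}=2\gamma\Sigma_{1,s+1}^GK_{s+1}^G\Sigma_{1,s+1}^H\Omega_{s+1}^{\T}$ already supplies the leading block of this factorization; the remaining work is to lift it to the full factors $\Theta_{s+1}^G,\Phi_{s+1}^G,\Theta_{s+1}^H,\Phi_{s+1}^H$, after which only norm bookkeeping remains. This is why the proof is deferred to Appendix~\ref{sec:proof-of-lemma-ref-lm-doubling_k1}.
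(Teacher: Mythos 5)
Your proposal is correct and follows essentially the same route as the paper's Appendix~\ref{sec:proof-of-lemma-ref-lm-doubling_k1}: write $A_{s+1}^{(s)}$ and $A_{s+1}^{(s+1)}$ in matched sandwiched forms with the common middle matrix $Z$ (your $\mathcal{C}_s$ differs from the paper's $Z$ only by the absorbed orthogonal factors $\Phi_{s+1}^G,\Phi_{s+1}^H$), telescope the truncation tails, and bound the middle by $\kappa_s$; the ``crucial identity'' you flag is exactly the SMWF-based unfolding the paper carries out in full. The only cosmetic differences are the side on which you telescope in~(i) and the grouping of the $K_s^G$ and $U_{s+1}^Y,V_{s+1}^Y$ factors in the central $2\times2$ block, both of which yield the identical bounds.
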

\begin{proof}
	The proof, especially for (\romannumeral2), is tedious and 	
	can be found in  Appendix~\ref{sec:proof-of-lemma-ref-lm-doubling_k1}. 	
\end{proof}

Although $\{H_k^{(j)}\}_{k=j}^{\infty}$ may not converge to $X$ for $j\ge 1$, 
however, by  \eqref{eq:solution_X_diff} and Lemma~\ref{lm:doubling_k1} 
we know that the error $H_k^{(j)}-X$ equals  the sum   of a finite number of   
truncated errors, which is bounded by the truncated errors. 
Hence we have the following convergence result. 
\begin{theorem}\label{thm:}
Provided that  the truncated errors are  small enough, 
$\{H_k^{(j)}\}_{k=j}^{\infty}$ and  $\{G_k^{(j)}\}_{k=j}^{\infty}$
converges quadratically  to $X$ and $Y$ respectively.  
\end{theorem}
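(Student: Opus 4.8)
The plan is to read the result off the error decomposition~\eqref{eq:solution_X_diff}, estimating its two kinds of contributions separately with the superscript $j$ held fixed while $k\to\infty$. First I would fix $j\ge1$ and note that, by Lemma~\ref{lm:care_to_dare} and the classical doubling theory summarised in items~(a)--(c) above, the tail $\{H_k^{(j)}\}_{k\ge j}$ converges to the unique symmetric positive semi-definite solution $X^{(j)}$ of the DARE~\eqref{eq:dare_j_dual_trun}, with item~(b) furnishing the quantitative bound
\[
\|X^{(j)}-H_k^{(j)}\|\le\big\|\big[(A_j^{(j)})^{\T}(I+X^{(j)}G_j^{(j)})^{-1}\big]^{2^{k-j}}\big\|\,\big\|X^{(j)}(I+Y^{(j)}X^{(j)})\big\|\,\big\|\big[(I+G_j^{(j)}X^{(j)})^{-1}A_j^{(j)}\big]^{2^{k-j}}\big\|.
\]
Since $(I+G_j^{(j)}X^{(j)})^{-1}A_j^{(j)}$ is d-stable (it inherits d-stability from the CARE as long as the truncations leaving $A_j^{(j)},G_j^{(j)},H_j^{(j)}$ close to the untruncated $A_j,G_j,H_j$ are mild, and this is automatic for large $j$ since $A_j^{(j)}\to0$), its $2^{k-j}$-th power has norm $\bigO(\rho^{2^{k-j}})$ for some $\rho\in(0,1)$, so this last term of~\eqref{eq:solution_X_diff} tends to $0$ quadratically in $k$. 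The dual estimate $\|Y^{(j)}-G_k^{(j)}\|=\bigO(\rho^{2^{k-j}})$ would follow identically from item~(c) applied to the dual DARE.

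Then I would bound the $k$-independent part of~\eqref{eq:solution_X_diff}, i.e. the telescoped truncation terms $\frac{1}{\ell^{(0)}}\|H_1^{(1)}-H_1\|+\xi^{(0)}\|A_1^{(1)}-A_1\|+\eta^{(0)}\|G_1^{(1)}-G_1\|+\sum_{s=1}^{j-1}\{\cdots\}$ together with their $\bigO(\|\cdot\|^2)$ remainders. The $G$- and $H$-increments are bounded directly by the truncation rule, $\|G_{s+1}^{(s+1)}-G_{s+1}^{(s)}\|\le\varepsilon_{s+1}\|G_{s+1}^{(s)}\|$ and $\|H_{s+1}^{(s+1)}-H_{s+1}^{(s)}\|\le\varepsilon_{s+1}\|H_{s+1}^{(s)}\|$, while the $A$-increments are precisely what Lemma~\ref{lm:doubling_k1} controls, giving $\|A_1^{(1)}-A_1\|\le4\gamma\varepsilon_1\|\Sigma_{1,1}^G\|\|\Sigma_1^Y\|\|\Sigma_{1,1}^H\|$ and $\|A_{s+1}^{(s+1)}-A_{s+1}^{(s)}\|\le4\gamma\kappa_s\varepsilon_{s+1}\|\Sigma_{1,s+1}^G\|\|\Sigma_{1,s+1}^H\|$. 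Feeding these into Lemma~\ref{lm:forward_error_dare} and Remark~\ref{rk:forward-error-dare} at each $s$ would yield $\|X^{(s)}-X^{(s+1)}\|\le C_s\varepsilon_{s+1}$ with $C_s$ depending only on $\ell^{(s)},\xi^{(s)},\eta^{(s)},\gamma$ and the norms of the iterates, so that $\|X-X^{(j)}\|\le\sum_{s=0}^{j-1}C_s\varepsilon_{s+1}$. Combining the two paragraphs gives $\|X-H_k^{(j)}\|\le\sum_{s=0}^{j-1}C_s\varepsilon_{s+1}+\bigO(\rho^{2^{k-j}})$: the $k$-dependent error decays quadratically while the residual floor is as small as the tolerances are chosen, which is the assertion. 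The estimate $\|Y-G_k^{(j)}\|$ would be handled verbatim via the dual DAREs and the dual halves of Lemmas~\ref{lm:forward_error_dare} and~\ref{lm:doubling_k1}.

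The main obstacle will not be assembling these inequalities but making the repeated use of Lemma~\ref{lm:forward_error_dare} legitimate and keeping the constants $C_s$ under control as $s$ (hence $j$) grows. I would handle this by an induction on $s$: provided $\varepsilon_1,\dots,\varepsilon_s$ are small enough, $X^{(s)}$ and $Y^{(s)}$ stay in a fixed neighbourhood of $X$ and $Y$, and $G_{s+1}^{(s+1)},H_{s+1}^{(s+1)}\ge0$ automatically by construction, so the d-stability of $(I+G_s^{(s)}X^{(s)})^{-1}A_s^{(s)}$ and its dual persists and the conditioning quantities $\ell^{(s)},\xi^{(s)},\eta^{(s)}$ remain uniformly bounded and bounded away from degeneracy; this both verifies the smallness hypotheses of Lemma~\ref{lm:forward_error_dare} (on $\omega,\zeta$, $g\theta<1$, and the two displayed inequalities) at step $s+1$ and bounds $C_{s+1}$ uniformly. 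Propagating this bootstrap through all $j$ truncation steps, and noting that $\sum_s C_s\varepsilon_{s+1}$ can be driven below any prescribed threshold by shrinking the tolerances, would complete the argument.
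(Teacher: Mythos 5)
Your proposal is correct and follows essentially the same route as the paper: it reads the conclusion off the error decomposition~\eqref{eq:solution_X_diff}, splits that bound into the $k$-dependent tail (handled by items~(a)--(c), which give $H_k^{(j)}\to X^{(j)}$ quadratically) and the $k$-independent accumulated truncation errors (handled by the truncation rule, Lemma~\ref{lm:doubling_k1}, and the perturbation bound of Lemma~\ref{lm:forward_error_dare} via Remark~\ref{rk:forward-error-dare}). The paper itself stops at exactly this observation, essentially treating~\eqref{eq:solution_X_diff} together with Lemma~\ref{lm:doubling_k1} as the proof; your write-up actually supplies a point the paper leaves tacit, namely the bootstrap argument guaranteeing that the smallness hypotheses of Lemma~\ref{lm:forward_error_dare} hold at every stage $s$ and that the conditioning quantities $\ell^{(s)},\xi^{(s)},\eta^{(s)}$ stay uniformly controlled as $s$ grows, so the constants $C_s$ do not blow up. You also correctly flag the mild imprecision in the theorem's phrasing: for fixed $j$ the sequence converges quadratically to $X^{(j)}$, which differs from $X$ by a floor of size $\sum_s C_s\varepsilon_{s+1}$; the theorem's ``converges to $X$'' must be read as ``to within the accumulated truncation tolerance,'' which is what the paper's preceding sentence (``may not converge to $X$ \ldots bounded by the truncated errors'') also concedes.
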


\section{Numerical Examples}\label{sec:numerical-examples}

In this section,  we illustrate the performance of the  dSDA$\rm{_t}$  
by applying it to  three steel profile cooling models, 
all of which are  from  the benchmarks collected at morWiki~\cite{community},  
and several randomly generated examples. For comparison, we also apply the rational 
Krylov subspace projection (RKSM)~\cite{simoncini2016analysis}, 
the RADI~\cite{bennerBKS2018radi} and the low-rank Newton-Kleinman ADI (NKADI)~\cite{saakKB2016mmess}  
methods
\footnote{The codes for RKSM and NKADI are available  
respectively from the homepage of Prof.~V.~Simoncini and the M-M.E.S.S.\ package.}.  
Note that the rational Krylov subspace in RKSM is 
$$
\Span\Big\{(A-\alpha_1 I)^{-\T}C^{\T}, \cdots, \prod_{i=1}^j (A -\alpha_i I)^{-\T} C^{\T}\Big\}.
$$
With $\alpha_1 = \cdots = \alpha_i=\gamma$, 
it is the subspace where the dSDA seeks the solution.
In \eqref{eq:care-iteration}, we illustrate that choosing those different 
shift parameters $\alpha_i$ seems unnecessary, 
although an appropriate selection may improve convergence.  
All algorithms are implemented in MATLAB 2017a on
a 64-bit PC with an Intel Core i7 processor at 3.20 GHz and 64G RAM.

\begin{example}\label{eg:example-1}
	The dimensions of the three models respectively are  $1357, 5177$ and  $20209$. In all test examples, 
	$A$ is symmetric and negative definite (thus stable)  
	and $B\in \mathbb{R}^{n\times 7}$ and $C\in \mathbb{R}^{6\times n}$.  
	For  all displayed numerical results corresponding to the dSDA$\rm_t$,   
	we set the tolerance for the normalized  residual, which is used for the stop criteria, 
	as $10^{-13}$ and the maximal number of iterations to $20$. 

	With    $\gamma=10^{-6}$   and   setting  the truncation 
	tolerance in each step  as $10^{-15}$,    we apply our dSDA$\rm_t$  to all  three test examples. 
	Figures~\ref{fig1357}--\ref{fig20209} trace the 
	normalized    residuals  of the CAREs   
	and the corresponding  dual equations: 
	\begin{align*}
		\rho_X&:=
		\frac{	\|A^{\T} \widetilde{H}_j + \widetilde{H}_j A - \widetilde{H}_j B B^{\T} \widetilde{H}_j + C^{\T} C \|_F}
		{2\|A^{\T} \widetilde{H}_j\|_F + \|\widetilde{H}_j B B^{\T} \widetilde{H}_j\|_F + \|C^{\T} C\|_F }, 
		\\
		\rho_Y&:=
		\frac{	\|A \widetilde{G}_j + \widetilde{G}_j A^{\T} - \widetilde{G}_j C^{\T} C \widetilde{G}_j + B B^{\T} \|_F}
		{2\|A \widetilde{G}_j\|_F + \|\widetilde{G}_j C^{\T} C \widetilde{G}_j\|_F + \|B B^{\T}\|_F },   
	\end{align*}
and the  numerical ranks of  $\widetilde{H}_j\equiv H_j^{(j)}$  
	and $\widetilde{G}_j\equiv G_j^{(j)}$ through the iteration. 

	\begin{figure*}[ht]
		\centering
		\subfloat[residuals]{
			\includegraphics[width=2.4in]{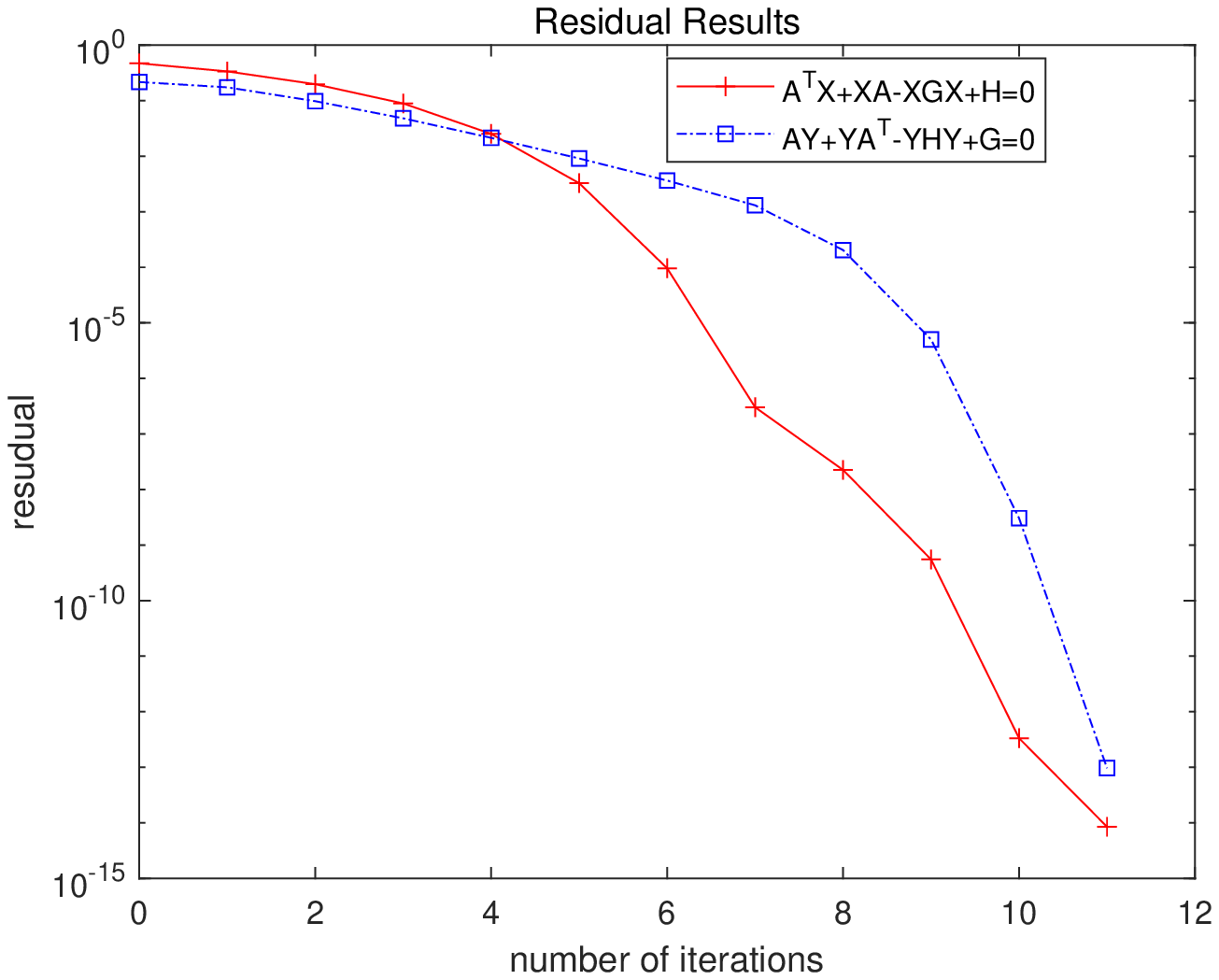}
		}
		\subfloat[ranks]{
			\includegraphics[width=2.4in]{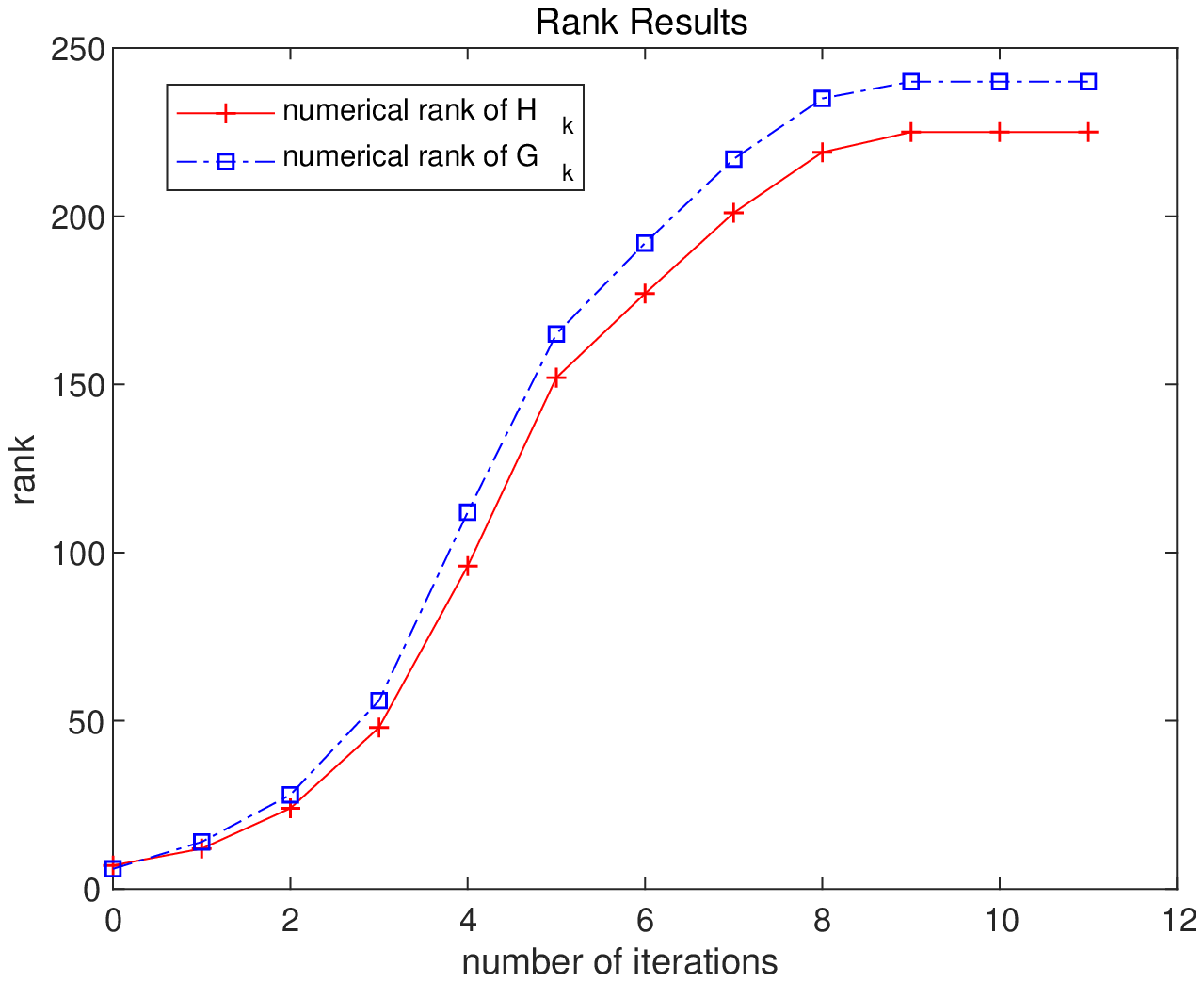}
		}
		\caption{Normalized  residuals and numerical  ranks for $n=1357$}
		\label{fig1357}
	\end{figure*}

	\begin{figure*}[ht]
		\centering
		\subfloat[residuals]{
			\includegraphics[width=2.4in]{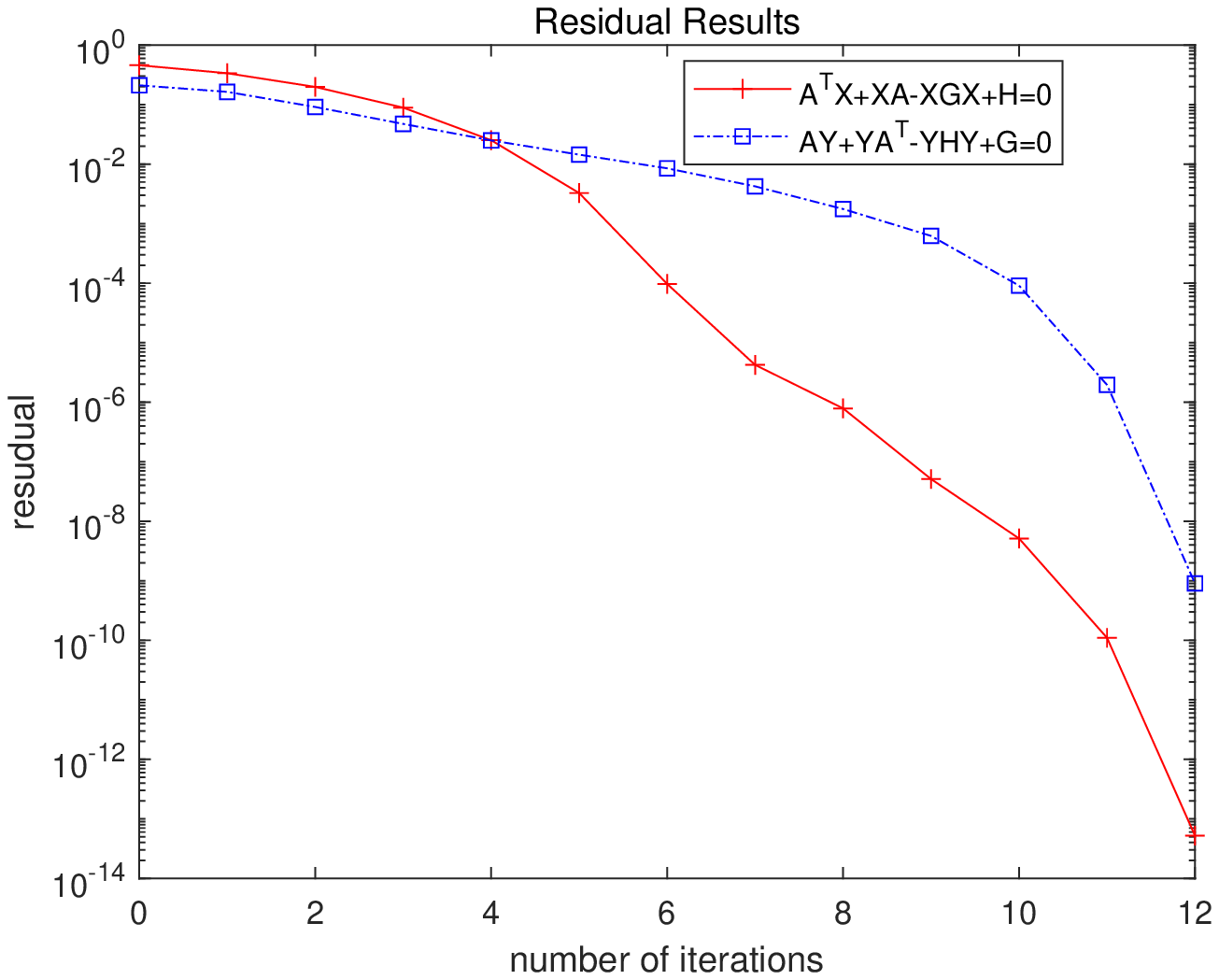}
		}
		\subfloat[ranks]{
			\includegraphics[width=2.4in]{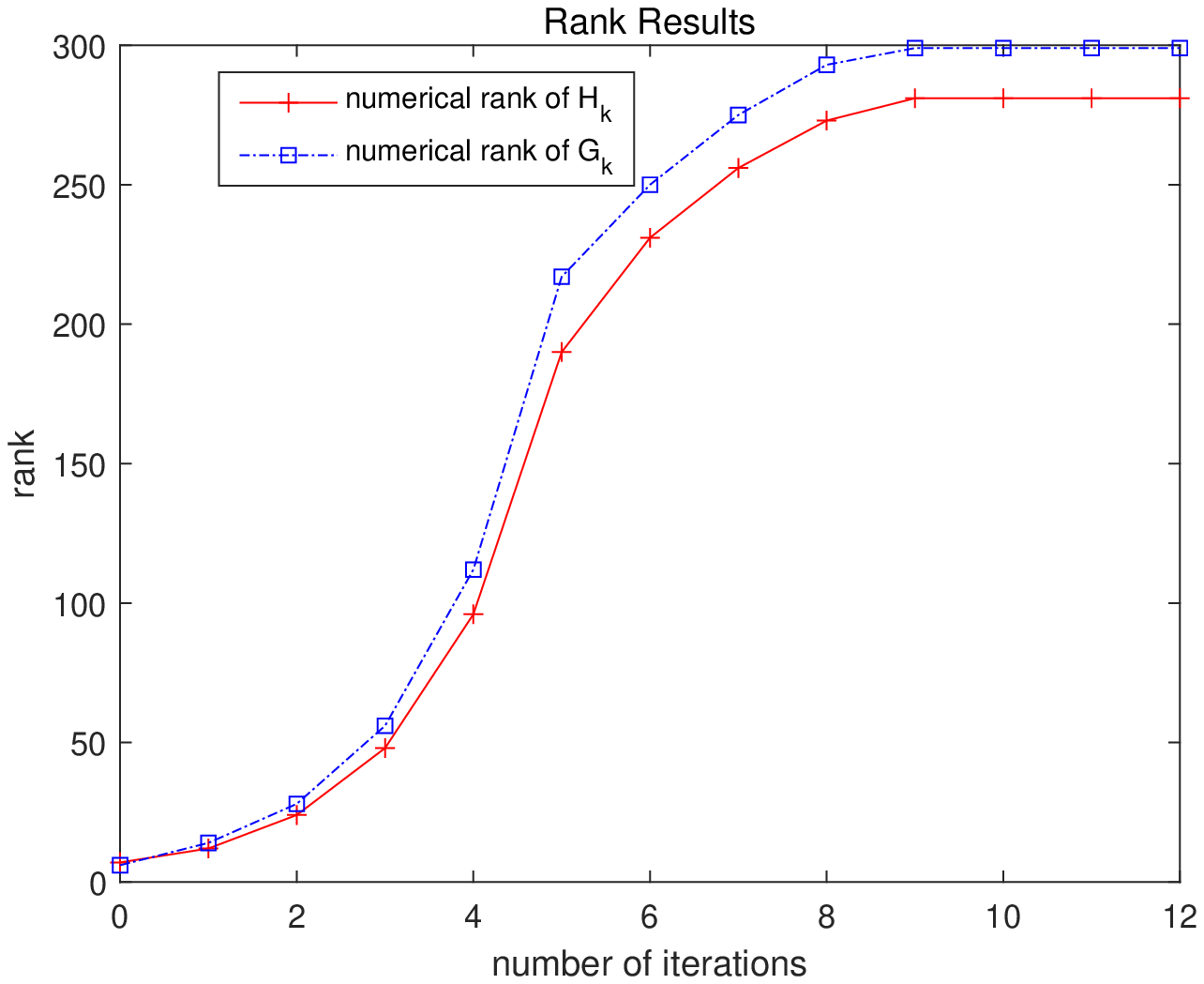}
		}
		\caption{Normalized residuals and numerical  ranks for $n=5177$}
		\label{fig5177}
	\end{figure*}

	\begin{figure*}[ht]
		\centering
		\subfloat[residuals]{
			\includegraphics[width=2.4in]{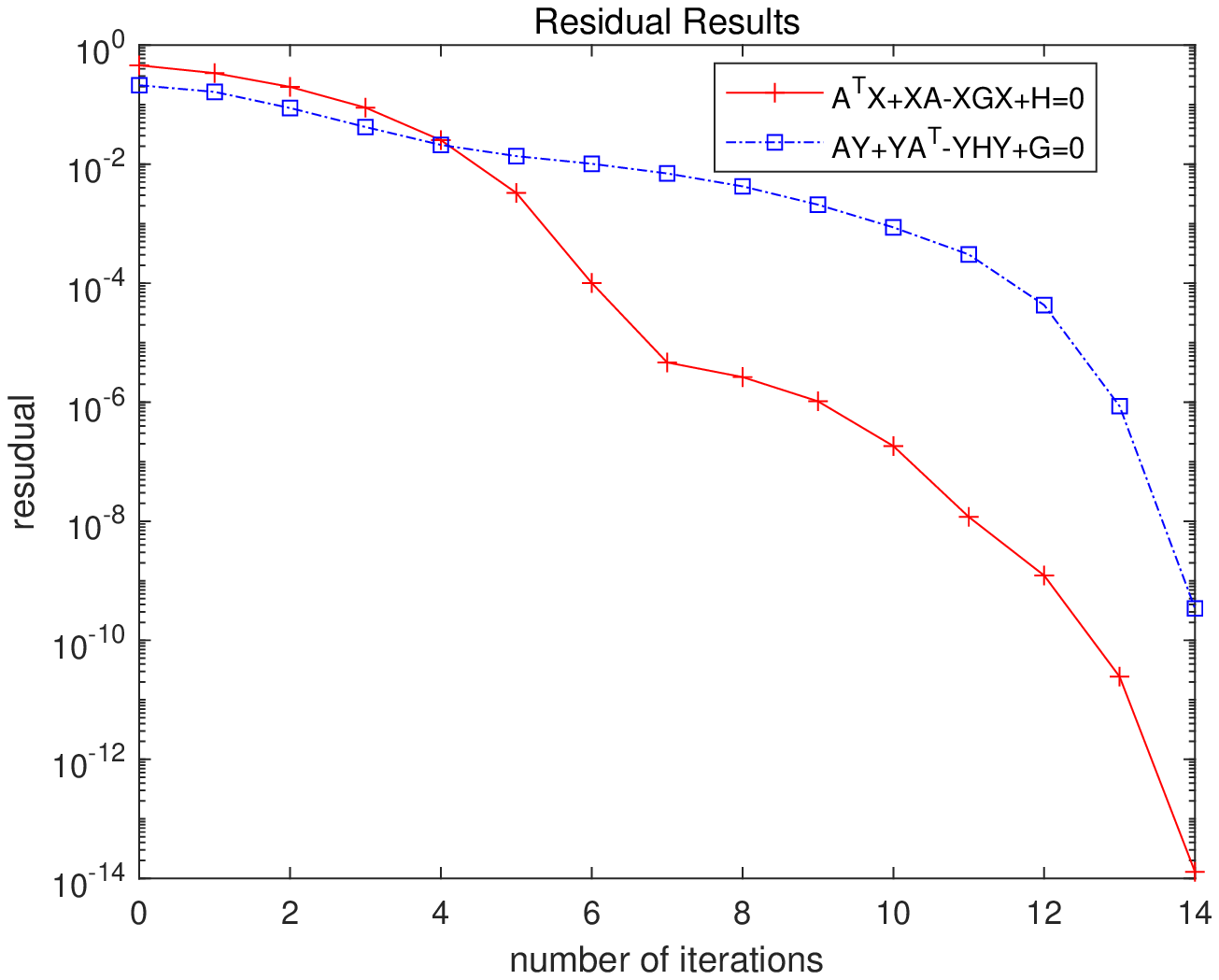}
		}
		\subfloat[ranks]{
			\includegraphics[width=2.4in]{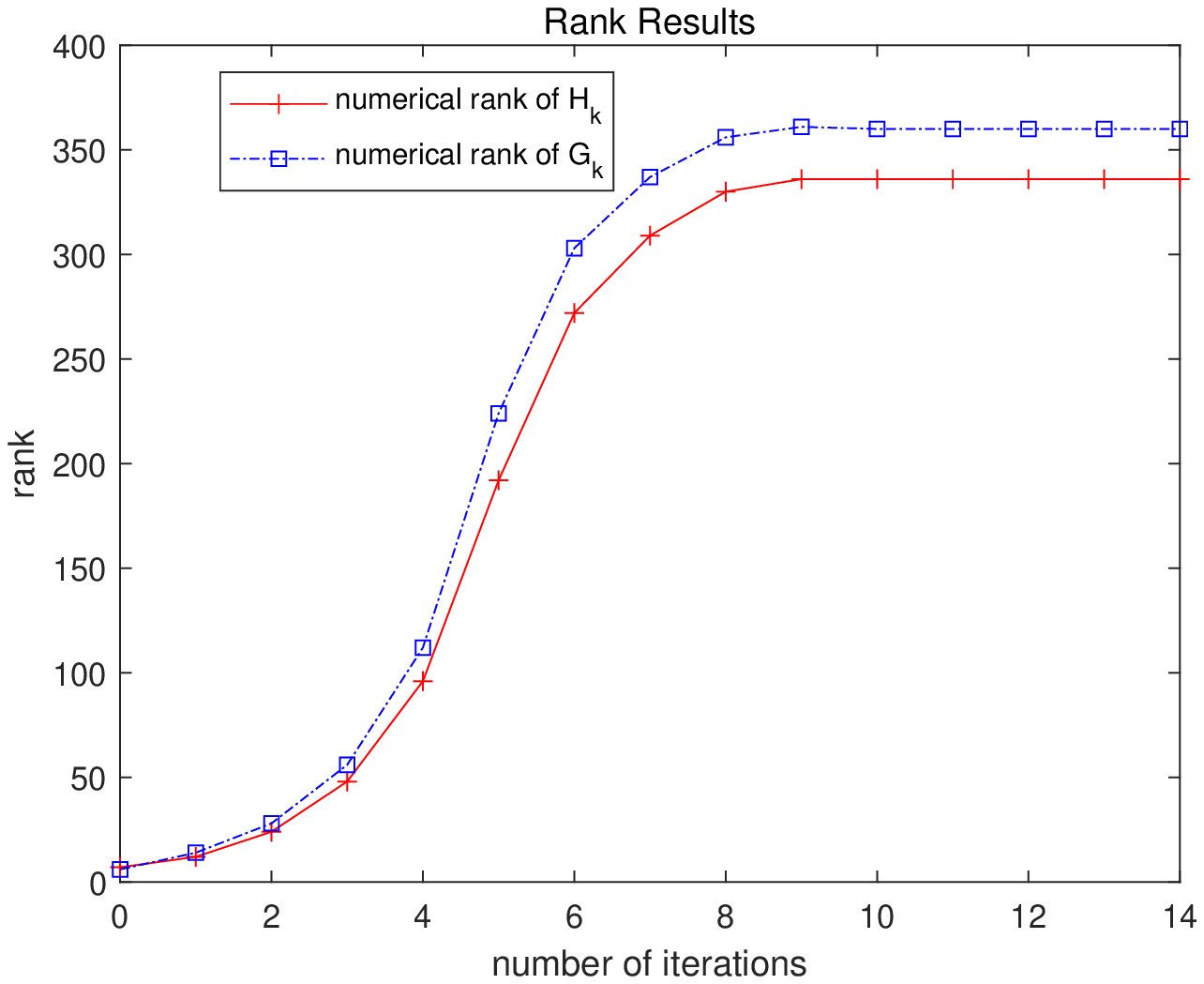}
		}
		\caption{Normalized residuals and numerical ranks for $n=20209$}
		\label{fig20209}
	\end{figure*}

	We compare the efficiency of the dSDA$\rm_t$, RKSM, RADI and NKADI for the three test examples. 
	Table~\ref{table-compare} displays the numerical results produced by the 
	four algorithms, where $r_X$ and ``eTime'' are respectively the rank of 
	the numerical solution and the associated execution time.    

	\begin{table}[t]
		\footnotesize
		\centering
		\begin{tabular}{c|c|c|c|c}
			\hline 
			\multicolumn{5}{c}{dimension $n=1357$}  \\
			\hline
			&dSDA$\rm_t$         & RKSM                    & RADI                    & NKADI                   \\
			\hline 
			$\rho_X$                                & $7.19995\times10^{-15}$ & $1.67961\times10^{-13}$ & $2.43172\times10^{-15}$ & $1.18202\times10^{-15}$ \\
			$r_X$                               & $225$                   & $1147$                  & $210$                   & $173$                   \\
			eTime                                    & $2.09850\times10^{1}$   & $6.98003\times10^{2}$   & $1.53205\times10^{0}$   & $9.21641\times10^{-1}$  \\
			\hline 
			\hline
			\multicolumn{5}{c}{dimension $n=5177$}  \\
			\hline 
			&dSDA$\rm_t$        & RKSM                    & RADI                    & NKADI                   \\
			\hline
			$\rho_X$                                & $5.20068\times10^{-14}$ & $7.79517\times10^{-12}$ & $7.57244\times10^{-14}$ & $1.41009\times10^{-15}$ \\
			$r_X$                               & $281$                   & $777$                   & $216$                   & $206$                   \\
			eTime                                    & $2.39945\times10^{2}$   & $1.99366\times10^{4}$   & $4.06343\times10^{1}$   & $9.93228\times10^{0}$   \\
			\hline 
			\hline
			\multicolumn{5}{c}{dimension $n=20209$} \\
			\hline 
			&dSDA$\rm_t$       & RKSM                    & RADI                    & NKADI                   \\
			\hline 
			$\rho_X$                                & $1.25375\times10^{-14}$ & $1.91847\times10^{-11}$ & $9.25792\times10^{-15}$ & $1.69087\times10^{-15}$ \\
			$r_X$                               & $337$                   & $2630$                  & $276$                   & $222$                   \\
			eTime                                    & $6.73489\times10^{3}$   & $2.60143\times10^{4}$   & $1.46030\times10^{3}$   & $3.28714\times10^{2}$   \\
			\hline 
		\end{tabular}
		\caption{Numerical results from different four methods}
		\label{table-compare}
	\end{table}

		In these three steel  profile cooling  examples, the NKADI performs the best, and our dSDA$\rm_t$   
	    is a little worse than the RADI. However, the ratio of  
		the execution time  for the  dSDA$\rm_t$  and the RADI shows a downtrend as 
		$n$ increases: when $n=1357$, the ratio is $13.6973$; 
	    and for $n=5177$, it is $5.9050$, while for $n=20209$, it declines to $4.6120$.   

	Table~\ref{table-difftol} shows the numerical results produced by  the  dSDA$\rm_t$    
	with five different truncation tolerances, 
	where $tol_j =10^{-(2j+4)}\times  tol$ ($j=1, \cdots, 5$)  
	with ``$tol$" being a  vector  and its entries  
	$tol(i)=\max\{10^{-i}, 10^{-15}\}$ for $i=1, 2, \cdots, 20$.   
	In Table~\ref{table-difftol}, ``iterations" stands for  the required  number of the iterations.    
	It follows from Table~\ref{table-difftol} that with different tolerances in truncation 
	the dSDA$\rm{_t}$ yield similar satisfactory results, meaning that 
	for the three models our dSDA$\rm{_t}$ is insensitive to the truncation tolerance.  
	\begin{table}[t]
		\tiny
		\centering
		\begin{tabular}{c|c|c|c|c|c}
			\hline 
			\multicolumn{6}{c}{$n=1357$}\\
			\hline 
			& $tol_1$ & $tol_2$    &  $tol_3$ & $tol_4$ & $tol_5$\\
			\hline 
			$\rho_X$ & $7.15828\times10^{-15}$ & $7.09397\times10^{-15}$ & $7.55596\times10^{-15}$  & $7.55596\times 10^{-15}$ & $7.55596\times10^{-15}$ \\
			$\rho_Y$ & $1.55751\times10^{-12}$ & $9.65711\times10^{-14}$ & $9.66872\times10^{-14}$  & $9.66872\times 10^{-14}$ & $9.66872\times 10^{-14}$ \\
			$r_X$ &  $218$ & $225$   &$225$ & $225$& $225$ \\
			$r_Y$ &  $230$ & $240$   &$241$ &$241$ & $241$  \\
			iterations &$11$ & $11$ &$11$ & $11$& $11$ \\
			eTime &$2.10654\times 10^{1}$ & $2.22646\times 10^{1}$ & $2.22835\times 10^{1}$ & $2.24233\times 10^{1}$&$2.22558\times 10^{1}$\\
			\hline
			\hline 
			\multicolumn{6}{c}{$n=5177$}\\
			\hline 
			& $tol_1$ & $tol_2$    &  $tol_3$& $tol_4$ & $tol_5$\\
			\hline 
			$\rho_X$ & $5.17750\times10^{-14}$ & $5.18665\times10^{-14}$ & $  5.17608 \times10^{-14}$ & $5.17608 \times10^{-14}$ & $5.17608 \times10^{-14}$  \\
			$\rho_Y$ & $9.03585\times10^{-10}$ & $9.03585\times10^{-10}$ & $   9.03585\times10^{-10}$  &$9.03585\times10^{-10}$ &$9.03585\times10^{-10}$  \\
			$r_X$ &  $265$ & $276$   &$281$ &$281$ & $281$  \\
			$r_Y$ &  $281$ & $298$   &$299$ & $299$ & $299$ \\
			iterations &$12$ &$12$ & $12$&$12$ &$12$\\
			eTime &$2.35127\times 10^{2}$ & $2.45846\times 10^{2}$ & $2.52503\times 10^{2}$ & $2.53253\times 10^{2}$& $ 2.49535\times 10^{2}$\\
			\hline 
			\hline
			\multicolumn{6}{c}{$n=20209$}\\
			\hline 
			& $tol_1$ & $tol_2$    &  $tol_3$& $tol_4$ & $tol_5$\\
			\hline 
			$\rho_X$ & $1.21879\times10^{-14}$ & $1.34771\times10^{-14}$ & $1.37765\times10^{-14}$ & $1.37765\times10^{-14}$ & $1.37765\times10^{-14}$ \\
			$\rho_Y$ & $3.43638\times10^{-10}$ & $3.43638\times10^{-10}$ & $3.43638\times10^{-10}$  & $3.43638\times10^{-10}$ & $3.43638\times10^{-10}$  \\
			$r_X$ &  $321$ & $335$   &$336$ & $336$ & $336$ \\
			$r_Y$ &  $341$ & $358$   &$360$ & $360$ & $360$ \\
			iterations &$14$ &$14$ & $14$& $14$ & $14$\\
			eTime & $7.07405\times10^{3}$ & $7.41030\times10^{3}$ & $7.39546\times10^{3}$ & $7.41256\times10^{3}$ & $7.39824\times10^{3}$\\
			\hline 
		\end{tabular}
		\caption{Numerical results with different truncation tolerances}
		\label{table-difftol}
	\end{table}
\end{example}

\begin{example}\label{eg:random}
	We compare further the dSDA$\rm{_t}$ with the NKADI and RADI. 
	This test  set  includes $1000$  examples,  all of which  
	are randomly generated as follows: firstly we obtain  a nonsingular  
	$X$ by the command \verb|randn| in MATLAB  and two diagonal matrices  
	$\Lambda_1>0, \Lambda_2<0$,  whose sizes respectively are $100$ and $3$.	
	The absolute values of all entries of $\Lambda_1, \Lambda_2$ follow 
	the uniform distribution in the interval $(0,1)$.	
	Then we set $A = \frac{1}{100}  X \diag(\Lambda_1, \Lambda_2) X^{-1}$ 
	and randomly generate $B \in \mathbb{R}^{103 \times 3}, C \in \mathbb{R}^{3\times 103}$ 
	with \verb|randn|, with   $(A, B)$ being stabilizable and $(A, C)$   detectable. 

	For those $1000$ random examples, our dSDA$\rm_t$  and the NKADI, 
	which does not perform the Galerkin projection process, 
	converge and produce low rank solutions. 
	On average, the dSDA$\rm_t$  requires $8.4780$ doubling steps 
	for achieving a normalized  residual smaller than $10^{-13}$, 
	while the NKADI needs $18.5080$ Newton-Kleinman steps. 
	The NKADI with the Galerkin acceleration produces no result, 
	because it fails to solve some projected CAREs. 
	The RADI fails for all these $1000$ random examples, 
	possibly attributable the unstable $A$ or the choices of shifts. 
	In fact, \cite{bennerBKS2018radi} claims that with the same shifts, 
	the RADI and the Incremental Low-Rank Subspace 
	Iteration~\cite{massoudiOT2016analysis} are equivalent. 
	The latter 	achieves convergence 
	when $A$ is stable and satisfies the non-Blaschke condition 
	$\sum_{k=1}^{\infty} \frac{\Re(\alpha_k)}{1+|\alpha_k|^2} = -\infty$, 
	where $\alpha_k$ are the shifts in each iteration. 
	However, in our test set, $A$ are not stable for all randomly generated examples. 

    Next with generated $A, B, C$ as above,  we scale $B$ and $C$ to 
	one tenth of their sizes, and then apply the NKADI and the dSDA$\rm_t$  
	to the randomly generated examples. 
	The NKADI with the Galerkin projection still fails, 
	while the NKADI without the Galerkin step achieves convergence only for $26$ examples,  
	even though the maximum iteration number for the Newton-Kleinman and  the ADI steps 
	are both set as $1000$. 
	In fact, the NKADI is quadratically convergent provided the initial guess $X_0$ is stabilizing. 
	However, for such large random examples, it is difficult to find good initial 
	stabilizing  values of $X_0$. 	In the same $1000$ tests, the dSDA$\rm_t$  is effective for $32$\%
	examples within $9.9718$ iterations, and all convergent examples produce low-rank solutions. 
	For those failed examples, the dSDA$\rm_t$  seems to converge 
	within several iterations, then spin out  of the convergence. 
	We observe that imbalance in entries in some matrices,
	possibly leading to ill-conditioning. A balancing technique may cure the problem 
	but we shall leave this research for the future. 
\end{example}

In summary, Examples~\ref{eg:example-1} and \ref{eg:random} illustrate the efficiency and 
convergence of the dSDA$\rm{_t}$ for large well-conditioned CAREs, with the method occasionally 
outperformed by the NKADI and the RADI for problems with stable $A$. 
However, for  examples with unstable $A$, the dSDA$\rm{_t}$ demonstrates its superiority, 
without any need  for any initial stabilizing $X_0$.   

\section{Conclusions}\label{sec:conclusions}

The classical structure-preserving doubling algorithm (SDA) is an efficient and 
elegant method for computing the unique symmetric 
positive semi-definite solution to CAREs of small and medium sizes. However, for large-scale CAREs,  
it suffers from  high computational costs,  
in terms of execution time and memory  requirement. Fortunately, the decoupled structure-preserving 
doubling algorithm (dSDA) decouples the three iteration  
recursions, thus improving the efficiency of the SDA for CAREs.  
Based on the elegant form of the dSDA,  we propose a novel  
truncation technique, which control the ill-conditioning of the kernels of the approximate solutions and their ranks. 
The resulting algorithm, the truncated dSDA or dSDA$\rm_t$, computes low-rank approximate solutions efficiently. 
Furthermore, we analyze the proposed algorithm and 
prove its  convergence. Numerical experiments illustrate the efficiency of the dSDA$\rm_t$.

\appendix
\section{Proof of Lemma~\ref{lm:L_j}}\label{sec:proof-of-lemma-lm}

We just show the computing  details for $L_2^{G}$ and $L_2^{H}$, from the known $L_1^G$ and $L_1^H$.  
For $L_j^{G}$ and $L_j^{H}$ with $j\ge 3$, the process is similar.  
Since 
\begin{align*}
	E(Y_2^{(1)}) 
	= &\left[I_{4m} + Y_2^{(1)} (Y_2^{(1)})^{\T}\right]^{-1}
	\\
	=&
	\begin{bmatrix}
		I_{2m}&-2\gamma \Gamma \\ 0&I_{2m}
	\end{bmatrix}
	\left[ (I_{2m} + Y_1 Y_1^{\T})^{-1} \oplus  \Psi_1^{-1} \right]
	\begin{bmatrix}
		I_{2m}&0\\-2\gamma \Gamma^{\T} & I_{2m}
	\end{bmatrix},
\end{align*}
where $\Gamma:=(I_{2m}+Y_1Y_1^{\T})^{-1}Y_1 (T_1^{(1)})^{\T}$, 
$T_1^{(1)} = (M_0^G)^{\T} \Omega_1 M_0^H$ and  
$\Psi_1:= I_{2m}+Y_1Y_1^{\T} + 4\gamma^2 T_1^{(1)}(I_{2l} + Y_1^{\T} Y_1)^{-1}(T_1^{(1)})^{\T}$,
then by the definition of $L_2^G$ in \eqref{eq:L-j-UV} and 
\eqref{eq:L-2-12}, we  have 
\begin{align}
	L_2^{G}
	&\equiv 
	2\gamma (\Theta_{1,2}^G)^{\T} M_1^G 
	\left[I_{4m} + Y_2^{(2)}(Y_2^{(2)})^{\T} \right]^{-1}
	Y_2^{(2)}(M_1^H)^{\T} \Theta_{1,2}^H \Omega_2^{\T} \nonumber
	\\  
	&=
	\begin{multlined}[t]
		2\gamma (\Theta_{1,2}^G)^{\T} M_1^G 
		\begin{bmatrix}
			I_{2m}&-2\gamma \Gamma 
			\\ 0&I_{2m}
		\end{bmatrix}
		[ 	
		E(Y_1^{(1)}) 
		\oplus \Psi_1^{-1} 	
		]
		\begin{bmatrix}
			I_{2m}&0\\
			-2\gamma \Gamma^{\T}& I_{2m}
		\end{bmatrix}
		\\
		\cdot
		\begin{bmatrix}
			0 & Y_1^{(1)} \\ Y_1^{(1)} & 2\gamma T_1^{(1)}
		\end{bmatrix}
		(M_1^H)^{\T} \Theta_{1,2}^H \Omega_2^{\T}
	\end{multlined} \nonumber
	\\
	&=
	\begin{multlined}[t]
		2\gamma (\Theta_{1,2}^G)^{\T} 
		\begin{bmatrix}
			I_{r_1^G}&R_{12}^U \\ 0& R_2^U
		\end{bmatrix}
		\begin{bmatrix}
			I_{r_1^G}& -L_1^G \\ 0 & I_{r_1^G}
		\end{bmatrix}
		(I_2 \otimes   M_0^G)
		[ 	
		E(Y_1^{(1)}) 
		\oplus \Psi_1^{-1}	
		] 
		\\
		\cdot
		\begin{bmatrix}
			0 & Y_1^{(1)} \\ Y_1^{(1)} & 
			2\gamma T_1^{(1)}
			F(Y_1^{(1)})
		\end{bmatrix}
		(I_2 \otimes M_0^H)^{\T}  
		\begin{bmatrix}
			I_{r_1^H} & R_{12}^V \\ 0 & R_2^V
		\end{bmatrix}^{\T}
		\Theta_{1,2}^H \Omega_2^{\T}
	\end{multlined}\nonumber
	\\
	&=
	\begin{multlined}[t]
		2\gamma (\Theta_{1,2}^G)^{\T} 
		\begin{bmatrix}
			I_{r_1^G}&R_{12}^U \\ 0& R_2^U
		\end{bmatrix}
		\begin{bmatrix}
			I_{r_1^G}& -L_1^G \\ 0 & I_{r_1^G}
		\end{bmatrix}
		\\
		\cdot
		\begin{bmatrix}
			0 &M_0^G
			E(Y_1^{(1)})
			Y_1^{(1)} (M_0^H)^{\T}
			\\
			M_0^G \Psi_1^{-1} Y_1^{(1)}(M_0^H)^{\T} & 
			2\gamma M_0^G \Psi_1^{-1} T_1^{(1)}
			F(Y_1^{(1)}) 
			(M_0^H)^{\T}	
		\end{bmatrix}
		\\
		\cdot
		\begin{bmatrix}
			I_{r_1^H} & R_{12}^V \\ 0 & R_2^V
		\end{bmatrix}^{\T}
		\Theta_{1,2}^H \Omega_2^{\T}
	\end{multlined}\nonumber
	\\
	&=
	\begin{multlined}[t]
		2\gamma (\Theta_{1,2}^G)^{\T} 
		\begin{bmatrix}
			I_{r_1^G}&R_{12}^U \\ 0& R_2^U
		\end{bmatrix}
		\begin{bmatrix}
			I_{r_1^G}& -L_1^G \\ 0 & I_{r_1^G}
		\end{bmatrix}
		\\
		\cdot
		\begin{bmatrix}
			0 & \Sigma_{1,1}^G K_1^G \Sigma_{1,1}^H 
			\\
			M_0^G \Psi_1^{-1} Y_1^{(1)}(M_0^H)^{\T} & 
			2\gamma M_0^G \Psi_1^{-1} T_1^{(1)}
			F(Y_1^{(1)})
			(M_0^H)^{\T}	
		\end{bmatrix}
		\\
		\cdot
		\begin{bmatrix}
			I_{r_1^H} & R_{12}^V \\ 0 & R_2^V
		\end{bmatrix}^{\T}
		\Theta_{1,2}^H \Omega_2^{\T},
	\end{multlined}
	\label{eq:L_2G}
\end{align}
where $E(Y_1^{(1)}) := [I_{2m} +Y_1^{(1)}(Y_1^{(1)})^{\T}]^{-1}$, 
$F(Y_1^{(1)}): = [ I_{2l}+(Y_1^{(1)})^{\T}Y_1^{(1)}]^{-1}$. 
We next calculate some submatrices in \eqref{eq:L_2G}.    
By \eqref{eq:svd1}, \eqref{eq:L-2-12} and  $M_0^H=(\Theta_{1,1}^H)^{\T}R_1^VP_1^V$, 
we deduce  
\begin{align*}
	&T_1^{(1)} 
	F(Y_1^{(1)})	
	(T_1^{(1)})^{\T}
	=
	(M_0^G)^{\T}
	\Omega_1 
	M_0^H
	F(Y_1^{(1)})
	(M_0^H)^{\T}
	\Omega_1^{\T}
	M_0^G
	\\
	=&
	(M_0^G)^{\T}
	\Omega_1 
	M_0^H
	V_1^Y  
	\left[I_{2l} + (\Sigma_1^Y)^{\T} \Sigma_1^Y\right]^{-1} 
	(V_1^Y)^{\T}
	(M_0^H)^{\T}
	\Omega_1^{\T} 
	M_0^G
	\\
	=&
	(M_0^G)^{\T} \Omega_1 
	(\Theta_{1,1}^H)^{\T} R_1^V P_1^V 
	V_1^Y 
	\left[I_{2l} + (\Sigma_1^Y)^{\T} \Sigma_1^Y\right]^{-1} 
	(V_1^Y)^{\T}
	(P_1^V)^{\T} (R_1^V)^{\T} \Theta_{1,1}^H
	\Omega_1^{\T} M_0^G
	\\
	=&
	(M_0^G)^{\T}
	\Omega_1 (\Theta_{1,1}^H)^{\T} 
	\Theta_1^H (\Sigma_1^H)^2  (\Theta_1^H)^{\T} 
	\Theta_{1,1}^H \Omega_1^{\T} 
	M_0^G
	\equiv  
	(M_0^G)^{\T}
	\Omega_1
	(\Sigma_{1,1}^H)^2 \Omega_1^{\T} 
	M_0^G,
\end{align*}
implying that 
	$\Psi_1 = I_{2m}+Y_1Y_1^{\T} + 4\gamma^2 	
	(M_0^G)^{\T} \Omega_1
	(\Sigma_{1,1}^H)^2 \Omega_1^{\T} 
	M_0^G$. 
Because 
\begin{align*}
	M_0^G E(Y_1^{(1)}) 
	(M_0^G)^{\T} 
	&= (\Theta_{1,1}^G)^{\T} \Theta_1^G (\Sigma_1^G)^2 (\Theta_1^G)^{\T} \Theta_{1,1}^G
	\equiv (\Sigma_{1,1}^G)^2, 
	\\ 
	M_0^H F(Y_1^{(1)}) 
	(M_0^H)^{\T} 
	&= (\Theta_{1,1}^H)^{\T} \Theta_1^H (\Sigma_1^H)^2 (\Theta_1^H)^{\T} \Theta_{1,1}^H 
	\equiv (\Sigma_{1,1}^H)^2,
\end{align*}
then by \eqref{eq:L-2-12}  and the SMWF, we have  
\begin{align}
	&
	M_0^G \Psi_1 ^{-1} Y_1^{(1)}  (M_0^H)^{\T}   \nonumber
	\\
	=&
	M_0^G 
	\left[
		I_{2m} + Y_1^{(1)}(Y_1^{(1)})^{\T} 
		+ 4\gamma^2 (M_0^G)^{\T} 
		\Omega_1(\Sigma_{1,1}^H)^2 \Omega_1^{\T} 
		M_0^G
	\right]^{-1}
	Y_1^{(1)} (M_0^H)^{\T} \nonumber 
	\\
	=&
	\begin{multlined}[t]
		M_0^G
		E(Y_1^{(1)}) 
		Y_1^{(1)}
		(M_0^H)^{\T}
		- 4\gamma^2 M_0^G
		E(Y_1^{(1)}) 
		(M_0^G)^{\T} \Omega_1 
		\\
		\cdot
		\left[ \left(\Sigma_{1,1}^H\right)^{-2} 
			+ 4\gamma^2 \Omega_1^{\T}  
			M_0^G
			E(Y_1^{(1)}) 
			(M_0^G)^{\T}\Omega_1 
		\right]^{-1}
		\Omega_1^{\T} M_0^G
		E(Y_1^{(1)}) 
		Y_1^{(1)} (M_0^H)^{\T}
	\end{multlined}\nonumber
	\\
	=& 
	\Sigma_{1,1}^G K_1^G \Sigma_{1,1}^H 
	-4\gamma^2 (\Sigma_{1,1}^G)^2
	\Omega_1
	\left[\left(\Sigma_{1,1}^H\right)^{-2} + 
	4\gamma^2 \Omega_1^{\T} (\Sigma_{1,1}^G)^2  \Omega_1\right]^{-1} 
	\Omega_1^{\T} \Sigma_{1,1}^G K_1^G \Sigma_{1,1}^H
	\nonumber
	\\
	=&
	\left\{ I_{r_1^G} - 4\gamma^2 (\Sigma_{1,1}^G)^2 \Omega_1 
		\left[ \left(\Sigma_{1,1}^H\right)^{-2} +
		4\gamma^2 \Omega_1^{\T} (\Sigma_{1,1}^G)^2  \Omega_1 \right]^{-1}
		\Omega_1^{\T}
	\right\}  \Sigma_{1,1}^G K_1^G \Sigma_{1,1}^H
	\nonumber
	\\
	=&
	\begin{multlined}[t]
		\left\{ I_{r_1^G} 	- 4\gamma^2 (\Sigma_{1,1}^G)^2 \Omega_1 \Sigma_{1,1}^H 
		[  I_{r_1^H}  +4\gamma^2  \Sigma_{1,1}^H \Omega_1^{\T} 
		(\Sigma_{1,1}^G)^2  \Omega_1 \Sigma_{1,1}^H ]^{-1} \Sigma_{1,1}^H
	\Omega_1^{\T}\right\}  
	\Sigma_{1,1}^G K_1^G \Sigma_{1,1}^H 
	\end{multlined}\nonumber  
	\\
	=&
	\begin{multlined}[t]
		\left\{
			I_{r_1^G} 	- 4\gamma^2 [  I_{r_1^G}  +4\gamma^2  
				(\Sigma_{1,1}^G)^2 \Omega_1 (\Sigma_{1,1}^H)^2
				\Omega_1^{\T} ]^{-1} (\Sigma_{1,1}^G)^2  \Omega_1 (\Sigma_{1,1}^H)^2
		\Omega_1^{\T}\right\}  	
		\Sigma_{1,1}^G K_1^G \Sigma_{1,1}^H
	\end{multlined} \nonumber  
	\\
	=&
	\left[I_{r_1^G} + 4\gamma^2 (\Sigma_{1,1}^G)^2 \Omega_1 (\Sigma_{1,1}^H)^2 \Omega_1^{\T}
	\right]^{-1}\Sigma_{1,1}^G K_1^G \Sigma_{1,1}^H \nonumber
	\\
	=&
	\Sigma_{1,1}^G \left[I_{r_1^G} + 4\gamma^2 \Sigma_{1,1}^G \Omega_1 
		(\Sigma_{1,1}^H)^2 \Omega_1^{\T} \Sigma_{1,1}^G 
	\right]^{-1} K_1^G \Sigma_{1,1}^H \nonumber
	\\
	=&
	\Sigma_{1,1}^G 
	\left[I_{r_1^G} + 4\gamma^2 U_2^Y \Sigma_2^Y 
		(\Sigma_2^Y)^{\T} (U_2^Y)^{\T} 
	\right]^{-1} K_1^G \Sigma_{1,1}^H \nonumber
	\\
	=&
	\Sigma_{1,1}^G U_2^Y 
	\left(I_{r_1^G}+4\gamma^2 \Sigma_2^Y(\Sigma_2^Y)^{\T}\right)^{-1}
	(U_2^Y)^{\T} K_1^G \Sigma_{1,1}^H \nonumber 
	\\
	=& 
	\Sigma_{1,1}^G U_2^Y 
	(\Upsilon_2^G)^{-1}
	(U_2^Y)^{\T} K_1^G \Sigma_{1,1}^H 
	:=
	Z_{21} 
	\equiv 
	\Sigma_{1,1}^G U_2^Y 
	(\Upsilon_2^G)^{-1/2}
	\widehat{Z}_{21}, \label{eq:L-2-21} 
\end{align}
where  $\Upsilon_2^G = I_{r_1^G}+4\gamma^2 \Sigma_2^Y(\Sigma_2^Y)^{\T}$ and 
$\widehat{Z}_{21}:= (\Upsilon_2^G)^{-1/2}(U_2^Y)^{\T} K_1^G \Sigma_{1,1}^H$.  
We also have 
\begin{align}
	&
	M_0^G \Psi_1^{-1} 
	T_1^{(1)} 
	F(Y_1^{(1)}) 
	(M_0^H)^{\T} \nonumber 
	\\
	=&
	M_0^G
	\left[
		I_{2m}+Y_1^{(1)} (Y_1^{(1)})^{\T} 
		+ 4\gamma^2  (M_0^G)^{\T}
		\Omega_1 (\Sigma_{1,1}^H)^2 \Omega_1^{\T} 
		M_0^G
	\right]^{-1} 
	T_1^{(1)} 
	F(Y_1^{(1)}) 
	(M_0^H)^{\T}	\nonumber 
	\\
	=& 
	\begin{multlined}[t]
		M_0^G E(Y_1^{(1)}) 
		T_1^{(1)} F(Y_1^{(1)}) 
		(M_0^H)^{\T}
		-4\gamma^2 	
		M_0^G E(Y_1^{(1)}) 
		(M_0^G)^{\T} \Omega_1 
		\\
		\cdot
		\left[ \left(\Sigma_{1,1}^H\right)^{-2} + 4\gamma^2 \Omega_1^{\T} 
			M_0^G	E(Y_1^{(1)}) 
		(M_0^G)^{\T}\Omega_1  \right]^{-1}
		\Omega_1^{\T} M_0^G
		E(Y_1^{(1)}) 
		T_1^{(1)} F(Y_1^{(1)}) 
		(M_0^H)^{\T}
	\end{multlined} \nonumber 
	\\
	=&
	\begin{multlined}[t]
		M_0^G
		E(Y_1^{(1)}) 
		(M_0^G)^{\T}
		\Omega_1 
		M_0^H F(Y_1^{(1)})
		(M_0^H)^{\T}
		- 4\gamma^2  (\Sigma_{1,1}^G)^2 \Omega_1
		\\
		\cdot
		\left[ \left(\Sigma_{1,1}^H\right)^{-2} + 4\gamma^2 \Omega_1^{\T} (\Sigma_{1,1}^G)^2 
			\Omega_1 
		\right]^{-1} \Omega_1^{\T} 
		M_0^G E(Y_1^{(1)}) 
		(M_0^G)^{\T} \Omega_1 M_0^H F(Y_1^{(1)}) 
		(M_0^H)^{\T}
	\end{multlined}\nonumber
	\\
	=&
	\begin{multlined}[t]
		(\Sigma_{1,1}^G)^2 \Omega_1 (\Sigma_{1,1}^H)^2  - 4\gamma^2 (\Sigma_{1,1}^G)^2 \Omega_1 
	\left[ (\Sigma_{1,1}^H )^{-2} + 4\gamma^2 \Omega_1^{\T}  (\Sigma_{1,1}^G)^2 \Omega_1
	\right]^{-1}
	\\
	\cdot
	\Omega_1^{\T}  (\Sigma_{1,1}^G)^2 \Omega_1
	(\Sigma_{1,1}^H)^2
	\end{multlined}  \nonumber
	\\
	=&
	\left[I_{r_1^G} + 4\gamma^2 (\Sigma_{1,1}^G)^2 \Omega_1 (\Sigma_{1,1}^H)^2 
	\Omega_1^{\T} \right]^{-1} 
	(\Sigma_{1,1}^G)^2 \Omega_1 (\Sigma_{1,1}^H)^2 \nonumber
	\\
	=&
	\Sigma_{1,1}^G \left[I_{r_1^G} + 4\gamma^2 \Sigma_{1,1}^G \Omega_1 (\Sigma_{1,1}^H)^2 
	\Omega_1^{\T} \Sigma_{1,1}^G \right]^{-1} 
	\Sigma_{1,1}^G \Omega_1 (\Sigma_{1,1}^H)^2 \nonumber
	\\
	= &
	\Sigma_{1,1}^G U_2^Y
	(\Upsilon_2^G)^{-1}
	(U_2^Y)^{\T} \Sigma_{1,1}^G \Omega_1 (\Sigma_{1,1}^H)^2 
	:=
	Z_{22}
	\equiv 
	\Sigma_{1,1}^G U_2^Y
	(\Upsilon_2^G)^{-1/2}
	\widehat{Z}_{22}, \label{eq:L-2-22}
\end{align}
where $\widehat{Z}_{22}:= (\Upsilon_2^G)^{-1/2}
	(U_2^Y)^{\T} \Sigma_{1,1}^G \Omega_1 (\Sigma_{1,1}^H)^2$. 
By~\eqref{eq:L_2G}, \eqref{eq:L-2-21} and~\eqref{eq:L-2-22}  
it consequently holds that 
\begin{align*}
	L_2^G
	&=
	2\gamma (\Theta_{1,2}^G)^{\T} 
	\begin{bmatrix}
		I_{r_1^G}&R_{12}^U \\ 0& R_2^U
	\end{bmatrix}
	\begin{bmatrix}
		I_{r_1^G}& -L_1^G \\ 0 & I_{r_1^G}
	\end{bmatrix}
	\begin{bmatrix}
		0 & \Sigma_{1,1}^G K_1^G \Sigma_{1,1}^H
		\\
		Z_{21} &  2\gamma Z_{22} 
	\end{bmatrix}
	\begin{bmatrix}
		I_{r_1^H} & R_{12}^V \\ 0 & R_2^V
	\end{bmatrix}^{\T}
	\Theta_{1,2}^H \Omega_2^{\T}
	\\
	&=
	\begin{multlined}[t]
		2\gamma (\Theta_{1,2}^G)^{\T} 
		\begin{bmatrix}
			I_{r_1^G}&R_{12}^U \\ 0& R_2^U
		\end{bmatrix}
		\begin{bmatrix}
			I_{r_1^G} & -L_1^G \\ 0 &I_{r_1^G}
		\end{bmatrix}
		\left[
			\Sigma_{1,1}^G  
			\oplus 
			\Sigma_{1,1}^G U_2^Y 
			(\Upsilon_2^G)^{-1/2}
		\right] \\ 
		\cdot \begin{bmatrix}
			0& K_1^G\Sigma_{1,1}^H \\ 
			\widehat{Z}_{21}&
		2\gamma \widehat{Z}_{22}\end{bmatrix}
		\begin{bmatrix}
			I_{r_1^H} & R_{12}^V \\ 0 & R_2^V
		\end{bmatrix}^{\T}
		\Theta_{1,2}^H \Omega_2^{\T}.
	\end{multlined}
\end{align*}
Moreover, by 
\eqref{eq:svd2_H}, we get 
\begin{align*}
	L_2
	&= 
	2\gamma (\Theta_{1,2}^G)^{\T} 
	\Theta_2^G \Sigma_2^G (\Phi_2^G)^{\T} 
	\begin{bmatrix}
		0& K_1^G\Sigma_{1,1}^H \\ 
		\widehat{Z}_{21}&
	2\gamma \widehat{Z}_{22}\end{bmatrix}
	\begin{bmatrix}
		I_{r_1^H} & R_{12}^V \\ 0 & R_2^V
	\end{bmatrix}^{\T}
	\Theta_{1,2}^H \Omega_2^{\T}
	\\
	&=
	\begin{multlined}[t]
		2\gamma \Sigma_{1,2}^G  (\Phi_{1,2}^G)^{\T}
		\begin{bmatrix}
			0& K_1^G\Sigma_{1,1}^H \\ 
			\widehat{Z}_{21}&
			2\gamma \widehat{Z}_{22}
		\end{bmatrix}
		\begin{bmatrix}
			I_{r_1^H} & L_1^H \\ 0 & I_{r_1^H} 
		\end{bmatrix}^{\T}  
		\\
		\cdot
			\left[(\Sigma_{1,1}^H)^{-1} \oplus 
			(\Sigma_{1,1}^H)^{-1}V_2^Y
			(\Upsilon_2^H)^{1/2}\right]
			\left[\Sigma_{1,1}^H \oplus 
			(\Upsilon_2^H)^{-1/2}
			(V_2^Y)^{\T} \Sigma_{1,1}^H\right]
		\\
		\cdot
		\begin{bmatrix}
			I_{r_1^H} & -L_1^H \\ 0 & I_{r_1^H} 
		\end{bmatrix}^{\T}  
		\begin{bmatrix}
			I_{r_1^H} & R_{12}^V \\ 0 & R_2^V
		\end{bmatrix}^{\T}
		\Theta_{1,2}^H \Omega_2^{\T}
	\end{multlined}
	\\
	&=
	\begin{multlined}[t]
		2\gamma \Sigma_{1,2}^G  (\Phi_{1,2}^G)^{\T}
		\begin{bmatrix}
			0& K_1^G\Sigma_{1,1}^H \\ 
			\widehat{Z}_{21}&
			2\gamma \widehat{Z}_{22}
		\end{bmatrix}
		\begin{bmatrix}
			I_{r_1^H} & L_1^H \\ 0 & I_{r_1^H} 
		\end{bmatrix}^{\T}  
		\\
		\cdot
		\left[(\Sigma_{1,1}^H)^{-1} \oplus 
			(\Sigma_{1,1}^H)^{-1}V_2^Y
			(\Upsilon_2^H)^{1/2}\right]
		\Phi_2^H \Sigma_2^H (\Theta_2^H)^{\T} 
		\Theta_{1,2}^H \Omega_2^{\T}
	\end{multlined}
	\\
	&=
	\begin{multlined}[t]
		2\gamma \Sigma_{1,2}^G  (\Phi_{1,2}^G)^{\T}
		\begin{bmatrix}
			0& K_1^G\Sigma_{1,1}^H \\ 
			\widehat{Z}_{21}&
			2\gamma \widehat{Z}_{22}
		\end{bmatrix}
		\begin{bmatrix}
			I_{r_1^H} & L_1^H \\ 0 & I_{r_1^H} 
		\end{bmatrix}^{\T}  
		\\
		\cdot
			\left[(\Sigma_{1,1}^H)^{-1} \oplus 
			(\Sigma_{1,1}^H)^{-1}V_2^Y
			(\Upsilon_2^H)^{1/2} \right]
		\Phi_{1,2}^H \Sigma_{1,2}^H \Omega_2^{\T}
	\end{multlined}
\end{align*}
where $\Upsilon_2^H= I_{r_1^H}+4\gamma^2 (\Sigma_2^Y)^{\T}\Sigma_2^Y$.

We further  deduce that
	$K_1^G \Sigma_{1,1}^H(L_1^H)^{\T}(\Sigma_{1,1}^H)^{-1}
	= 
	2\gamma K_1^G \Sigma_{1,1}^H \Omega_1^{\T} \Sigma_{1,1}^G K_1^G$,
\begin{align*}
	&
	\widehat{Z}_{21}(\Sigma_{1,1}^H)^{-1} + 
	2\gamma \widehat{Z}_{22}(L_1^H)^{\T}(\Sigma_{1,1}^H)^{-1}
	\\
	=&
	\begin{multlined}[t]
		(\Upsilon_2^G)^{-1/2}
		(U_2^Y)^{\T} K_1^G 
		+4\gamma^2 
		(\Upsilon_2^G)^{-1/2}
		(U_2^Y)^{\T} 
		\Sigma_{1,1}^G \Omega_1
		(\Sigma_{1,1}^H)^2  \Omega_1^{\T} \Sigma_{1,1}^G K_1^G 
	\end{multlined}
	\\
	=&
	(\Upsilon_2^G)^{-1/2}
	(U_2^Y)^{\T}
	\left[I_{r_1^G} + 4\gamma^2 \Sigma_{1,1}^G \Omega_1 
	(\Sigma_{1,1}^H)^2 \Omega_1^{\T} \Sigma_{1,1}^G\right] K_1^G 
	\\
	=&
	(\Upsilon_2^G)^{-1/2}
	(U_2^Y)^{\T} U_2^Y	
	\Upsilon_2^G
	(U_2^Y)^{\T} K_1^G
	\equiv 
	(\Upsilon_2^G)^{1/2}
	(U_2^Y)^{\T} K_1^G, 
\end{align*}
and 
\begin{align*}
	&\widehat{Z}_{22}(\Sigma_{1,1}^H)^{-1} V_2^Y 
	(\Upsilon_2^H)^{1/2}
	=
	(\Upsilon_2^G)^{-1/2}
	(U_2^Y)^{\T} \Sigma_{1,1}^G \Omega_1 \Sigma_{1,1}^H 
	V_2^Y 
	(\Upsilon_2^H)^{1/2}
	\\
	=& 
	(\Upsilon_2^G)^{-1/2}
	(U_2^Y)^{\T} 
	U_2^Y \Sigma_2^Y (V_2^Y)^{\T} V_2^Y 
	(\Upsilon_2^H)^{1/2}
	\equiv 
	\Sigma_2^Y. 
\end{align*}
Hence, we obtain 
\begin{align*}
	L_2^{G}
	&=
		\begin{multlined}[t]
		2\gamma \Sigma_{1,2}^G (\Phi_{1,2}^G)^{\T} 
		\begin{bmatrix}
			2\gamma K_1^G \Sigma_{1,1}^H \Omega_1^{\T} \Sigma_{1,1}^G K_1^G 
			& K_1^G V_2^Y 
			(\Upsilon_2^H)^{1/2}
			\\
			(\Upsilon_2^G)^{1/2}
			(U_2^Y)^{\T} K_1^G 
			& 2\gamma \Sigma_2^Y
		\end{bmatrix}
		\Phi_{1,2}^H \Sigma_{1,2}^H \Omega_2^{\T}
	\end{multlined}
	\\
	&\equiv
	2\gamma \Sigma_{1,2}^G 
	K_2^G  \Sigma_{1,2}^H \Omega_2^{\T}, 
\end{align*}
where 
	$K_2^G:= 
	(\Phi_{1,2}^G)^{\T}
	\begin{bmatrix}
		2\gamma K_1^G \Sigma_{1,1}^H \Omega_1^{\T} \Sigma_{1,1}^G K_1^G 
		& K_1^G V_2^Y (\Upsilon_2^H)^{1/2} 
		\\
		(\Upsilon_2^G)^{1/2} (U_2^Y)^{\T} K_1^G 
		& 2\gamma \Sigma_2^Y
	\end{bmatrix}
	\Phi_{1,2}^H$. 

By the same manipulations we also obtain 
$L_2^H = 2\gamma \Sigma_{1,2}^H  
K_2^H  \Sigma_{1,2}^G \Omega_2$ 
with $K_2^H\equiv (K_2^G)^{\T}$. 

\section{Proof of Lemma~\ref{lm:doubling_k1}}\label{sec:proof-of-lemma-ref-lm-doubling_k1}

	For (\romannumeral1), substituting the SVD of $Y_1$ and    
	\eqref{eq:svd1} into $A_1$ and $A_1^{(1)}$ gives  
	\begin{align*}
		&\N{A_1^{(1)} - A_1} 
		\\
		=&
		\begin{multlined}[t]
		2\gamma \|Q_1^U \Theta_1^G \Sigma_1^G (\Phi_1^G)^{\T} \Sigma_1^Y 
			\Phi_1^H \Sigma_1^H (\Theta_1^H)^{\T}  (Q_1^V)^{\T}  
			\\
			- Q_1^U \Theta_{1,1}^G \Sigma_{1,1}^G (\Phi_{1,1}^G)^{\T} 
		\Sigma_1^Y \Phi_{1,1}^H \Sigma_{1,1}^H (\Theta_{1,1}^H)^{\T} (Q_1^V)^{\T} \|
		\end{multlined}		\\
		=&
		2\gamma \N{\Theta_1^G \Sigma_1^G (\Phi_1^G)^{\T} \Sigma_1^Y 
			\Phi_1^H \Sigma_1^H (\Theta_1^H)^{\T} - \Theta_{1,1}^G 
			\Sigma_{1,1}^G (\Phi_{1,1}^G)^{\T} 
		\Sigma_1^Y \Phi_{1,1}^H \Sigma_{1,1}^H (\Theta_{1,1}^H)^{\T}}
		\\
		=& 
		2\gamma \N{\Theta_1^G \Sigma_1^G (\Phi_1^G)^{\T} \Sigma_1^Y 
			\Phi_{2,1}^H \Sigma_{2,1}^H (\Theta_{2,1}^H)^{\T} + 
			\Theta_{2,1}^G \Sigma_{2,1}^G (\Phi_{2,1}^G)^{\T} \Sigma_1^Y 
		\Phi_{1,1}^H \Sigma_{1,1}^H (\Theta_{1,1}^H)^{\T}}
		\\
		\leq &
		2\gamma \N{\Theta_1^G \Sigma_1^G (\Phi_1^G)^{\T} \Sigma_1^Y 
		\Phi_{2,1}^H \Sigma_{2,1}^H (\Theta_{2,1}^H)^{\T} }  
		+ 2\gamma \N{\Theta_{2,1}^G \Sigma_{2,1}^G (\Phi_{2,1}^G)^{\T} 
		\Sigma_1^Y \Phi_{1,1}^H \Sigma_{1,1}^H (\Theta_{1,1}^H)^{\T}}
		\\
		\leq &
		2\gamma \left(\N{\Sigma_{1,1}^G} \N{\Sigma_1^Y} \N{\Sigma_{2,1}^H} + 
		\N{\Sigma_{2,1}^G} \N{\Sigma_1^Y} \N{\Sigma_{1,1}^H}\right)
		\leq  
		4\gamma \varepsilon_1 \N{\Sigma_{1,1}^G} \N{\Sigma_1^Y} \N{\Sigma_{1,1}^H}.
	\end{align*}

	For (\romannumeral2),  by the definitions of $A_{s+1}^{(s)}$ (in~\eqref{eq:A-k-j}) 
	and $A_{s+1}^{(s+1)}$ (in \eqref{eq:Aj+1}),  we have 
	\begin{align*}
		A_{s+1}^{(s)}
		&=
		\begin{multlined}[t]
			\widetilde{A}_{\gamma}^{2^{s+1}} - 2\gamma 
			\left[ \mathcal{Q}_s^U, \, \widetilde{A}_{\gamma}^{2^s} \mathcal{Q}_s^U \right]
			\left[I_2 \otimes \left((\Theta_{1,s}^G)^{\T} M_{s-1}^G \right) \right]
			E(Y_{s+1}^{(s)}) 
			Y_{s+1}^{(s)}
			\\
			\cdot
			\left[I_2 \otimes\left((M_{s-1}^H)^{\T}  \Theta_{1,s}^H\right) \right]
			\left[ \mathcal{Q}_s^V,\, (\widetilde{A}_{\gamma}^{\T})^{2^s} \mathcal{Q}_s^V
			\right]^{\T}
		\end{multlined}
		\\
		&=
		\begin{multlined}[t]
			\widetilde{A}_{\gamma}^{2^{s+1}} - 2\gamma 
			\left[
				\mathcal{Q}_s^U, \widetilde{A}_{\gamma}^{2^s} \mathcal{Q}_s^U
			\right] 
			\begin{bmatrix}
				I_{r_s^G} & -L_s^G \\ 0 & I_{r_s^G}
			\end{bmatrix}
			\\
			\cdot
			\left[I_2 \otimes\left((\Theta_{1,s}^G)^{\T} M_{s-1}^G \right) \right]
			[ E(Y_{s}^{(s)}) \oplus \Psi_s^{-1} ] 
			\begin{bmatrix}
				0 &Y_s^{(s)} \\
				Y_s^{(s)} & 2\gamma T_s^{(s)}
				F(Y_{s}^{(s)})
			\end{bmatrix}
			\\
			\cdot
			\left[I_2 \otimes\left((M_{s-1}^H)^{\T}  \Theta_{1,s}^H\right) \right]
			\left[
				\mathcal{Q}_s^V, (\widetilde{A}_{\gamma}^{\T})^{2^s} \mathcal{Q}_s^V
			\right]^{\T},
		\end{multlined}
	\end{align*}
	\begin{align*} 
		A_{s+1}^{(s+1)} 
		&=
		\widetilde{A}_{\gamma}^{2^{s+1}} - 2\gamma \mathcal{Q}_{s+1}^U (\Theta_{1,s+1}^G)^{\T} M_s^G
		E(Y_{s+1}^{(s)})
		Y_{s+1}^{(s)}(M_s^H)^{\T} 
		\Theta_{1,s+1}^H (\mathcal{Q}_{s+1}^V)^{\T}
		\\
		&=\begin{multlined}[t]
			\widetilde{A}_{\gamma}^{2^{s+1}} - 2\gamma \mathcal{Q}_{s+1}^U (\Theta_{1,s+1}^G)^{\T} 
			\begin{bmatrix}
				I_{r_s^G} & R_{12}^{s,U}\\ 0& R_2^{s,U}
			\end{bmatrix}
			\left[I_2\otimes \left((\Theta_{1,s}^G)^{\T} M_{s-1}^G\right)\right]
			E(Y_{s+1}^{(s)})
			\\
			\cdot 
			Y_{s+1}^{(s)}
			\left[I_2\otimes \left((M_{s-1}^H)^{\T} \Theta_{1,s}^H \right)\right] 
			\begin{bmatrix}
				I_{r_s^H}& R_{12}^{s,V} \\ 0 & R_2^{s,V}
			\end{bmatrix}^{\T} 
			\Theta_{1,s+1}^H(\mathcal{Q}_{s+1}^V)^{\T}, 
		\end{multlined}
	\end{align*}
	where $E(Y_{s+1}^{(s)}):= [I_{2^{s+1}m} + Y_{s+1}^{(s)}(Y_{s+1}^{(s)})^{\T}]^{-1}$, 
	$E(Y_s^{(s)}):= [I_{2^s m}+Y_s^{(s)}(Y_s^{(s)})^{\T}]^{-1}$,  
	$\Psi_s:= I_{2^s m}+Y_s^{(s)}(Y_s^{(s)})^{\T} + 
	4\gamma^2 T_s^{(s)} F(Y_s^{(s)}) (T_s^{(s)})^{\T}$, 
	$F(Y_s^{(s)}):= [I_{2^s l}+(Y_s^{(s)})^{\T}Y_s^{(s)}]^{-1}$.  	

	Next we reformulate  $A_{s+1}^{(s)}$ and $A_{s+1}^{(s+1)}$. 
	By  the SMWF,  
	\eqref{eq:Sigma_j_H},  
	\eqref{eq:L-j-UV} (the definition of $L_s^G$),  the SVD of 
	$\Sigma_{1,s}^G \Omega_s \Sigma_{1,s}^H$ and  \eqref{eq:Lj_GH}, we have    
	\begin{align*}
		(\Theta_{1,s}^G)^{\T} M_{s-1}^G
		E(Y_{s}^{(s)})
		Y_s^{(s)}
		(M_{s-1}^H)^{\T} \Theta_{1,s}^H
		= \Sigma_{1,s}^G  K_s^G  \Sigma_{1,s}^H,
	\end{align*}
	\begin{align*}
		& (\Theta_{1,s}^G)^{\T} M_{s-1}^G \Psi_s^{-1}
		Y_s^{(s)} 
		(M_{s-1}^H)^{\T} \Theta_{1,s}^H \\ 
		=&
		\begin{multlined}[t] 
			(\Theta_{1,s}^G)^{\T} M_{s-1}^G 
			[
				I_{2^sm}+Y_s^{(s)}(Y_s^{(s)})^{\T} 
				+ 4\gamma^2 (M_{s-1}^G)^{\T} \Theta_{1,s}^G \Omega_s
			(\Sigma_{1,s}^H)^2 \Omega_s^{\T} (\Theta_{1,s}^G)^{\T} M_{s-1}^G ]^{-1} 
			\\
			\cdot 
			Y_s^{(s)}(M_{s-1}^H)^{\T} \Theta_{1,s}^H
		\end{multlined}
		\\
		=& 
		\begin{multlined}[t]
			(\Theta_{1,s}^G)^{\T} M_{s-1}^G 
			E(Y_{s}^{(s)})
			Y_s^{(s)}(M_{s-1}^H)^{\T} \Theta_{1,s}^H 
			-4\gamma^2(\Theta_{1,s}^G)^{\T} M_{s-1}^G 
			E(Y_{s}^{(s)})
			(M_{s-1}^G)^{\T} \Theta_{1,s}^G
			\\
			\cdot
			\left[I_{r_s^G}  + 4\gamma^2 \Omega_s (\Sigma_{1,s}^H)^2 \Omega_s^{\T} 
				(\Theta_{1,s}^G)^{\T} M_{s-1}^G 
				E(Y_{s}^{(s)})
			(M_{s-1}^G)^{\T} \Theta_{1,s}^G\right]^{-1}
			\\
			\cdot \Omega_s (\Sigma_{1,s}^H)^2 \Omega_s^{\T} (\Theta_{1,s}^G)^{\T} M_{s-1}^G 
			E(Y_{s}^{(s)})
			Y_s^{(s)}(M_{s-1}^H)^{\T} \Theta_{1,s}^H
		\end{multlined}
		\\
		=&
		\begin{multlined}[t]
			\Sigma_{1,s}^G  K_s^G  \Sigma_{1,s}^H
		- 4\gamma^2 (\Sigma_{1,s}^G)^2 \left[I_{r_s^G}+
			4\gamma^2 \Omega_s(\Sigma_{1,s}^H)^2 \Omega_s^{\T} 
		(\Sigma_{1,s}^G)^2\right]^{-1}
		\\
		\cdot
		\Omega_s(\Sigma_{1,s}^H)^2 \Omega_s^{\T} 
		\Sigma_{1,s}^G  K_s^G \Sigma_{1,s}^H
		\end{multlined}	
		\\
		=&
		\left[I_{r_s^G}+4\gamma^2(\Sigma_{1,s}^G)^2 \Omega_s
		(\Sigma_{1,s}^H)^2 \Omega_s^{\T}\right]^{-1}
		\Sigma_{1,s}^G K_s^G  \Sigma_{1,s}^H
		\\
		=&
		\Sigma_{1,s}^G U_{s+1}^Y 
		(\Upsilon_{s+1}^G)^{-1}
		(U_{s+1}^Y)^{\T}   K_s^G \Sigma_{1,s}^H, 
	\end{align*}
	\begin{align*}
		&
		(\Theta_{1,s}^G)^{\T} M_{s-1}^G \Psi_s^{-1}
		T_s^{(s)} 
		F(Y_{s}^{(s)})
		(M_{s-1}^H)^{\T} \Theta_{1,s}^H
		\\
		=&
		\begin{multlined}[t]
			(\Theta_{1,s}^G)^{\T} M_{s-1}^G
			[
				I_{2^sm}+Y_s^{(s)}(Y_s^{(s)})^{\T} 
				+ 4\gamma^2 (M_{s-1}^G)^{\T} \Theta_{1,s}^G \Omega_s
			(\Sigma_{1,s}^H)^2 \Omega_s^{\T} (\Theta_{1,s}^G)^{\T} M_{s-1}^G  ]^{-1}
			\\
			\cdot (M_{s-1}^G)^{\T} \Theta_{1,s}^G \Omega_s  (\Sigma_{1,s}^H)^2
		\end{multlined}		
		\\
		=&
		\begin{multlined}[t]
			(\Theta_{1,s}^G)^{\T} M_{s-1}^G	
			E(Y_{s}^{(s)})
			(M_{s-1}^G)^{\T} \Theta_{1,s}^G \Omega_s (\Sigma_{1,s}^H)^2 
			- 4\gamma^2 (\Theta_{1,s}^G)^{\T} M_{s-1}^G	
			E(Y_{s}^{(s)})
			\\
			\cdot
			(M_{s-1}^G)^{\T} \Theta_{1,s}^G   
			\left[I_{r_s^G}+4\gamma^2 \Omega_s(\Sigma_{1,s}^H)^2 
				\Omega_s^{\T} (\Theta_{1,s}^G)^{\T} M_{s-1}^G
				E(Y_{s}^{(s)})
				(M_{s-1}^G)^{\T} \Theta_{1,s}^G
			\right]^{-1}
			\\
			\cdot
			\Omega_s (\Sigma_{1,s}^H)^2 
			\Omega_s^{\T} (\Theta_{1,s}^G)^{\T}
		     M_{s-1}^G 	
			E(Y_{s}^{(s)})
			(M_{s-1}^G)^{\T}  \Theta_{1,s}^G \Omega_s(\Sigma_{1,s}^H)^2
		\end{multlined}	
		\\
		=&
		\begin{multlined}[t]
			(\Sigma_{1,s}^G)^2 \Omega_s (\Sigma_{1,s}^H)^2 - 4\gamma^2 (\Sigma_{1,s}^G)^2 
			\left[I_{r_s^G}+4\gamma^2 \Omega_s(\Sigma_{1,s}^H)^2 \Omega_s^{\T} 
			(\Sigma_{1,s}^G)^2\right]^{-1}
			\\
			\cdot
			\Omega_s (\Sigma_{1,s}^H)^2 \Omega_s^{\T} (\Sigma_{1,s}^G)^2 
			\Omega_s (\Sigma_{1,s}^H)^2
		\end{multlined}
		\\
		=&
		\left[I_{r_s^G}+4\gamma^2 (\Sigma_{1,s}^G)^2 \Omega_s(\Sigma_{1,s}^H)^2 \Omega_s^{\T} 
		\right]^{-1} (\Sigma_{1,s}^G)^2 \Omega_s (\Sigma_{1,s}^H)^2
		\\
		=&
		\Sigma_{1,s}^G U_{s+1}^Y  
		(\Upsilon_{s+1}^G)^{-1}
		(U_{s+1}^Y)^{\T} \Sigma_{1,s}^G \Omega_s (\Sigma_{1,s}^H)^2
		\\
		=&
		\Sigma_{1,s}^G U_{s+1}^Y  
		(\Upsilon_{s+1}^G)^{-1}
		\Sigma_{s+1}^Y (V_{s+1}^Y)^{\T} \Sigma_{1,s}^H,  
	\end{align*}
	where $\Upsilon_{s+1}^G:= I_{r_s^G} +4\gamma^2 \Sigma_{s+1}^Y (\Sigma_{s+1}^Y)^{\T}$. 

	Furthermore, with 
	\begin{align*}
		K_s^G	\Sigma_{1,s}^H (L_s^H)^{\T} (\Sigma_{1,s}^H)^{-1} 
		&=
		2\gamma  K_s^G \Sigma_{1,s}^H  
		\Omega_s^{\T} \Sigma_{1,s}^G  K_s^G 
		\equiv
		2\gamma  K_s^G  V_{s+1}^Y (\Sigma_{s+1}^Y)^{\T} 
		(U_{s+1}^Y)^{\T}  K_s^G, 
	\end{align*}
	\begin{align*}
		&
		(\Upsilon_{s+1}^G)^{-1/2}
		(U_{s+1}^Y)^{\T}   K_s^G  + 
		2\gamma 
		(\Upsilon_{s+1}^G)^{-1/2}
		\Sigma_{s+1}^Y
		(V_{s+1}^Y)^{\T} \Sigma_{1,s}^H (L_s^H)^{\T} (\Sigma_{1,s}^H)^{-1}
		\\
		=& 
		\begin{multlined}[t]
			(\Upsilon_{s+1}^G)^{-1/2}
			(U_{s+1}^Y)^{\T}   K_s^G 
			+ 
			4\gamma^2 	
			(\Upsilon_{s+1}^G)^{-1/2}
			\Sigma_{s+1}^Y
			(\Sigma_{s+1}^Y)^{\T} 
			(U_{s+1}^Y)^{\T} K_s^G  
		\end{multlined}
		\\
		=&
		(\Upsilon_{s+1}^G)^{1/2}
		(U_{s+1}^Y)^{\T}   K_s^G,
	\end{align*}
	and  the abbreviation  
	\begin{align*}
		Z:=\begin{bmatrix}
			2\gamma  K_s^G  V_{s+1}^Y (\Sigma_{s+1}^Y)^{\T} 
			(U_{s+1}^Y)^{\T}  K_s^G 
			& K_s^G V_{s+1}^Y
			(\Upsilon_{s+1}^H)^{1/2}
			\\
			(\Upsilon_{s+1}^G)^{1/2}
			(U_{s+1}^Y)^{\T}  K_s^G 
			& 2\gamma \Sigma_{s+1}^Y 
		\end{bmatrix}, 
	\end{align*}
	substituting  the  above results into the expressions 
	for $A_{s+1}^{(s)}$ and $A_{s+1}^{(s+1)}$, we obtain 
	\begin{align*}
		A_{s+1}^{(s)}
		&=
		\begin{multlined}[t]
			\widetilde{A}_{\gamma}^{2^{s+1}} - 2\gamma \left[
				\mathcal{Q}_s^U, \widetilde{A}_{\gamma}^{2^s} \mathcal{Q}_s^U
			\right]
			\begin{bmatrix}
				I_{r_s^G} & -L_s^G \\ 0 & I_{r_s^G}
			\end{bmatrix}
			\left[ 
				\Sigma_{1,s}^G \oplus 
				\Sigma_{1,s}^G U_{s+1}^Y 
				(\Upsilon_{s+1}^G)^{-1/2}
			\right] 
			\\
			\cdot
			\begin{bmatrix}
				0& K_s^G\Sigma_{1,s}^H \\ 
				(\Upsilon_{s+1}^G)^{-1/2}
				(U_{s+1}^Y)^{\T} K_s^G \Sigma_{1,s}^H &
				2\gamma 
				(\Upsilon_{s+1}^G)^{-1/2}
				\Sigma_{s+1}^Y(V_{s+1}^Y)^{\T} \Sigma_{1,s}^H
			\end{bmatrix}
			\\
			\cdot
			\left[
				\mathcal{Q}_s^V, (\widetilde{A}_{\gamma}^{\T})^{2^s} \mathcal{Q}_s^V
			\right]^{\T}
		\end{multlined}
		\\
		&=
		\begin{multlined}[t]
			\widetilde{A}_{\gamma}^{2^{s+1}} - 2\gamma \left[
				\mathcal{Q}_s^U, Q_{s+1}^U
			\right]  \Theta_{s+1}^G \Sigma_{s+1}^G (\Phi_{s+1}^G)^{\T} 
			\\
			\cdot 
			\begin{bmatrix}
				0& K_s^G\Sigma_{1,s}^H \\ 
				(\Upsilon_{s+1}^G)^{-1/2}
				(U_{s+1}^Y)^{\T} K_s^G \Sigma_{1,s}^H &
				2\gamma 
				(\Upsilon_{s+1}^G)^{-1/2}
				\Sigma_{s+1}^Y(V_{s+1}^Y)^{\T} \Sigma_{1,s}^H
			\end{bmatrix}
			\\
			\cdot
			\left[
				\mathcal{Q}_s^V, (\widetilde{A}_{\gamma}^{\T})^{2^s} \mathcal{Q}_s^V
			\right]^{\T}
		\end{multlined}
		\\
		&=
		\begin{multlined}[t]
			\widetilde{A}_{\gamma}^{2^{s+1}} - 2\gamma \left[
				\mathcal{Q}_s^U, Q_{s+1}^U
			\right]  \Theta_{s+1}^G \Sigma_{s+1}^G (\Phi_{s+1}^G)^{\T} 
			Z \Phi_{s+1}^H \Sigma_{s+1}^H (\Theta_{s+1}^H)^{\T} 
			\left[
				\mathcal{Q}_s^V, Q_{s+1}^V
			\right]^{\T},
		\end{multlined}
	\end{align*}
	\begin{align*}
		A_{s+1}^{(s+1)}
		&=
		\begin{multlined}[t]
			\widetilde{A}_{\gamma}^{2^{s+1}} - 2\gamma 
		\mathcal{Q}_{s+1}^U (\Theta_{1,s+1}^G)^{\T} 
		\Theta_{s+1}^G \Sigma_{s+1}^G (\Phi_{s+1}^G)^{\T} Z
		\Phi_{s+1}^H \Sigma_{s+1}^H (\Theta_{s+1}^H)^{\T} 
		\\
		\cdot
		\Theta_{1,s+1}^H (\mathcal{Q}_{s+1}^V)^{\T}
		\end{multlined}		
		\\
		&=
		\widetilde{A}_{\gamma}^{2^{s+1}} - 2\gamma 
		\mathcal{Q}_{s+1}^U \Sigma_{1,s+1}^G (\Phi_{1,s+1}^G)^{\T} Z
		\Phi_{1,s+1}^H \Sigma_{1,s+1}^H  (\mathcal{Q}_{s+1}^V)^{\T}. 
	\end{align*}
	Then we have the difference   
	\begin{align*}
		&A_{s+1}^{(s+1)} - A_{s+1}^{(s)} 
		\\
		=&
		\begin{multlined}[t]
			2\gamma \left\{
				\left[
					\mathcal{Q}_s^U, Q_{s+1}^U
				\right]  \Theta_{s+1}^G \Sigma_{s+1}^G (\Phi_{s+1}^G)^{\T} 
				Z \Phi_{s+1}^H \Sigma_{s+1}^H (\Theta_{s+1}^H)^{\T} 
				\left[
					\mathcal{Q}_s^V, Q_{s+1}^V
			\right]^{\T} \right.
			\\
			\left. -
				\mathcal{Q}_{s+1}^U \Sigma_{1,s+1}^G (\Phi_{1,s+1}^G)^{\T} Z
				\Phi_{1,s+1}^H \Sigma_{1,s+1}^H (\mathcal{Q}_{s+1}^V)^{\T}
			\right\}
		\end{multlined}
		\\
		\equiv&
		\begin{multlined}[t]
			2\gamma 
			\left\{
				\left[
					\mathcal{Q}_s^U, Q_{s+1}^U
				\right]
				\Theta_{s+1}^G \Sigma_{s+1}^G (\Phi_{s+1}^G)^{\T} Z
				\Phi_{2,s+1}^H \Sigma_{2,s+1}^H (\Theta_{2,s+1}^H)^{\T} 
				\left[
					\mathcal{Q}_s^V, Q_{s+1}^V
			\right]^{\T} \right. 
			\\
			\left. +   
				\left[
					\mathcal{Q}_s^U, Q_{s+1}^U
				\right]
				\Theta_{2,s+1}^G
				\Sigma_{2,s+1}^G 
				(\Phi_{2,s+1}^G)^{\T} Z 
				\Phi_{1,s+1}^H \Sigma_{1,s+1}^H (\Theta_{1,s+1}^H)^{\T}
				\left[
					\mathcal{Q}_s^V, Q_{s+1}^V
				\right]^{\T}
			\right\},
		\end{multlined}
	\end{align*}
	leading to 
	\begin{align}
		\N{A_{s+1}^{(s+1)} - A_{s+1}^{(s)}} 
		&\leq 
		2\gamma \left(
			\N{\Sigma_{s+1}^G} \N{\Sigma_{2,s+1}^H} \N{Z}
		+ \N{\Sigma_{2,s+1}^G} \N{\Sigma_{1,s+1}^H} \N{Z}\right) \nonumber
		\\
		& \leq
		4\gamma \varepsilon_{s+1} \N{\Sigma_{1,s+1}^G} \N{\Sigma_{1,s+1}^H} \N{Z}. 
		\label{eq:normA_j+1}
	\end{align}
	Since
	\[
		Z = 
			\left[K_s^G V_{s+1}^Y \oplus
			I_{r_s^G}\right]
		\begin{bmatrix}
			2\gamma(\Sigma_{s+1}^Y)^{\T} & (\Upsilon_{s+1}^H)^{1/2}
			\\
			(\Upsilon_{s+1}^G)^{1/2} & 2\gamma \Sigma_{s+1}^Y
		\end{bmatrix}
			\left[(U_{s+1}^Y)^{\T} K_s^G \oplus 
			I_{r_s^H}\right]
	\]
	and 
	\[
		\N*{\begin{bmatrix}
				2\gamma (\Sigma_{s+1}^Y)^{\T}
				& 
				(\Upsilon_{s+1}^H)^{1/2}
				\\
				(\Upsilon_{s+1}^G)^{1/2}
				& 2\gamma\Sigma_{s+1}^Y
			\end{bmatrix}
		}
		\leq 
		2\gamma\N{\Sigma_{s+1}^Y} +\sqrt{1+4\gamma^2 \N{\Sigma_{s+1}^Y}^2},
	\]
	it then holds that  
	\begin{align}\label{eq:normZ}
		\N{Z}  \leq 
		\max\left\{1, \N{K_s^G}^2 \right\}  
		\left(2\gamma\N{\Sigma_{s+1}^Y} +\sqrt{1+4\gamma^2 \N{\Sigma_{s+1}^Y}^2}\right)\equiv \kappa_s.
	\end{align}
	Substituting \eqref{eq:normZ} into \eqref{eq:normA_j+1}  yields 
	the desired result. 

\section*{Acknowledgments}
Part of the work was completed when the first three authors visited the ST Yau Research Centre 
at the National Chiao Tung University, Hsinchu, Taiwan. 
The first author is supported in part by NSFC-11901290 and Fundamental Research Funds 
for the Central Universities, and the third author is supported in part by NSFC-11901340.

\bibliographystyle{siamplain}
\bibliography{sdals}

\end{document}